\newcommand{\gen}[1]{\{#1^k\}_{k\in \N}}
\newcommand{\dist}{{\rm dist}}
\newtheorem{Theorem}{Theorem}
\numberwithin{Theorem}{section}
\newtheorem{Proposition}[Theorem]{Proposition}
\newtheorem{Lemma}[Theorem]{Lemma}
\newtheorem{Corollary}[Theorem]{Corollary}
\theoremstyle{definition}
\newtheorem{Definition}[Theorem]{Definition}
\theoremstyle{remark}
\newtheorem{Remark}[Theorem]{Remark}
\numberwithin{equation}{section}
\newenvironment{customproof}[1]{%
  \par\pushQED{\qed}%
  \normalfont\topsep6pt \trivlist
  \item[\hskip\labelsep\itshape
    Proof of #1\@addpunct{.}]\ignorespaces
}{%
  \popQED\endtrivlist\@endpefalse
}
\DeclareMathOperator{\R}{\mathbb{R}}
\DeclareMathOperator{\N}{\mathbb{N}}
\newcommand{\E}{\mathcal{E}}
\newcommand{\Ll}{\mathcal L}
\newcommand{\EsL}{\widetilde{\mathcal E}_{s,\Lambda}}
\newcommand{\eee}{\ensuremath{\varepsilon}}
\newcommand{\Ha}{{\mathcal{H}}}
\subjclass[2020]{Primary 49Q20, 49K40; Secondary 28A75, 49Q10.}
\keywords{Fractional perimeter, quantitative isoperimetric inequality, quantitative Cheeger inequality, generalized minimizers.}
\begin{document}
	\title[A strong quantitative form of the fractional isoperimetric inequality]{ A strong quantitative form of the fractional isoperimetric inequality}
	\author[Eleonora Cinti, Enzo Maria Merlino and Berardo Ruffini]{Eleonora Cinti$^{\orcidlink{0000-0001-8266-8204}}$, Enzo Maria Merlino$^{\orcidlink{0000-0001-8501-9613}}$ and Berardo Ruffini$^{\orcidlink{0000-0001-6434-6383}}$}
	\date{\today}
	\address{Eleonora Cinti \orcidlink{0000-0001-8266-8204}: Dipartimento di Matematica, Universit\`a di Bologna, Piazza di Porta \\ S.Donato 5, 40126, Bologna-Italy}
	\email{eleonora.cinti5@unibo.it}
    
    \address{Enzo Maria Merlino \orcidlink{0000-0001-8501-9613}: Dipartimento di Matematica, Universit\`a di Bologna, Piazza di Porta \\ S.Donato 5, 40126, Bologna-Italy}
	\email{enzomaria.merlino2@unibo.it}

    \address{Berardo Ruffini \orcidlink{0000-0001-6434-6383}: Dipartimento di Matematica, Universit\`a di Bologna, Piazza di Porta \\ S.Donato 5, 40126, Bologna-Italy}
	\email{berardo.ruffini@unibo.it}

\begin{abstract}
	We show a strong version of the fractional quantitative isoperimetric inequality, {in which the isoperimetric deficit controls not only the Fraenkel asymmetry but also a sort of oscillation of
	the boundary}. This generalizes the local result by Fusco and Julin in \cite{FJ}. The proof follows a regularization process as in \cite{FJ}  but it is quite different in its spirit. Then, as a consequence of the quantitative inequality, we prove some stability estimates for a fractional Cheeger inequality.

\end{abstract}

\maketitle
	
	\section{Introduction and main results}
\subsection{Background: local and nonlocal quantitative isoperimetric inequalities}	
	In recent years, interest in studying the stability of isoperimetric-type inequalities has been steadily increasing. After  early contributions dating back to the beginning of the last century (see, e.g., \cite{Ber, Bon}), the first quantitative version of the isoperimetric inequality in arbitrary dimension was proved by Fuglede in \cite{F}. He showed that if $E\subset\R^n$ is a \emph{nearly spherical set}, i.e., a Lipschitz regular set with barycenter at the origin and volume equal to that of the unit ball $B_1$, such that
	\begin{equation}\label{one}
		\partial E = \{ z(1 + u(z)) : z \in \partial B_1 \},
	\end{equation}
	with $\|u\|_{W^{1,\infty}}$ sufficiently small, then
	\begin{equation}\label{two}
		\|u\|^2_{W^{1,2}(\partial B_1)} \leq C \left( P(E) - P(B_1) \right),
	\end{equation}
for some constant $C>0$ depending only on  $n$.  Here $P(\cdot)$ denotes the perimeter in the sense of De Giorgi. From this estimate, Fuglede was able to deduce that the perimeter deficit $P(E) - P(B_1)$ also bounds from above the Hausdorff distance between $E$ and $B_1$, provided that $E$ is nearly spherical or convex.
	
	However, the Hausdorff distance proves to be a too strong notion when dealing with general sets of finite perimeter. Hall in \cite{Hall} replaced it by the so-called \emph{Fraenkel asymmetry} index
	$$
	\alpha(E) := \min_{y \in \mathbb{R}^n} \left\{ \frac{|E \Delta B_r(y)|}{|B_r|} : |B_r| = |E| \right\},
	$$
	obtaining a non-sharp stability estimate. A sharp quantitative inequality was later proved in \cite{FuMP}  by Fusco, Maggi, and Pratelli, who showed that there exists a constant $\gamma(n)$ such that for any set $E$ of finite perimeter and measure
	\begin{equation}\label{three}
		\alpha(E)^2 \leq \gamma(n) \, \delta(E),
	\end{equation}
	where $\delta(E)$ denotes the \emph{isoperimetric deficit}:
	$$
	\delta(E) := \frac{P(E) - P(B_r)}{P(B_r)}, \qquad \text{with } |B_r| = |E|.
	$$
	
	Right after the work \cite{FuMP}, which employed symmetrization techniques, several alternative approaches have been developed. In \cite{CL} another proof of \eqref{three} was proposed, based on the so-called \emph{Selection Principle}, which relies on the regularity theory of almost-minimizers for the perimeter. In \cite{FMP-inv} estimate \eqref{three} was generalized to the anisotropic perimeter via an optimal transportation approach, and in \cite{CGP+}, it was extended to the case of weighted perimeter in convex cones by employing ABP-type techniques.
	
	In \cite{FJ}, {Fusco and Julin improved the quantitative inequality \eqref{three}, by considering a stronger notion of asymmetry, which takes into account, not only the $L^1$ distance between $E$ and an optimal ball (namely, the Fraenkel asymmetry), but also the oscillation of the boundary.
	
	More precisely, given a set of finite perimeter $E$, let us denote by $\partial^*E$ the reduced boundary of $E$ and by $\nu_E(x)$ the measure theretic outer unit normal to $\partial^*E$ at $x$.  Given a ball $B_r(y)$ with the same volume as $E$, for every point $x\in \partial^* E$ ,  let
	$$
	\pi_{y,r}(x):=y+r\frac{x-y}{|x-y|},\qquad\text{for all $x\not=y$},
	$$
be	the projection of $x$ on $\partial B_r(y)$. Fusco and Julin introduced the asymmetry index given by
    {\begin{equation*}
	A(E):=\min_{y\in\R^n}\biggl\{\frac{|E\Delta B_r(y)|}{|B_r|}+\biggl(\frac{1}{2P(B_r)}\int_{\partial^*E}|\nu_E(x)-\nu_{B_r(y)}(\pi_{y,r}(x))|^2d\Ha^{n-1}(x)\biggr)^{1/2}: |B_r|=|E|\biggr\},
	\end{equation*}
	and they proved that there exists a constant $C(n)$, such that for every set $E\subset\R^n$ of finite perimeter
    \begin{equation}\label{mainFJ}
        A(E)^2\leq C(n)\delta(E).
	\end{equation}
	As remarked in \cite{FJ}, the estimate \eqref{mainFJ} is equivalent to  the estimate \eqref{two} for nearly spherical sets (see also (5.16) in \cite{F}) and thus it generalizes Fuglede’s estimate to all sets of finite perimeter. In order to prove \eqref{mainFJ}, they first observed that  the following Poincar\'e-type inequality holds true (see \cite[Proposition 1.2]{FJ})}
\begin{equation}\label{poincare}
A(E) + \delta(E)^{1/2} \leq c\,\beta(E),
\end{equation}
for some $c=c(n)>0$, where
\begin{equation}\label{defbeta}
\beta(E) := \min_{y \in \mathbb{R}^n} \biggl\{ \left( \frac{1}{2P(B_r)} \int_{\partial^* E} \left| \nu_E(x) - \nu_{B_r(y)}(\pi_{y,r}(x)) \right|^2 d\mathcal{H}^{n-1}(x) \right)^{1/2} : |B_r| = |E| \biggr\}.
\end{equation}
Thus, the proof of \eqref{mainFJ}, reduces to show that
\begin{equation*}
\beta(E)^2 \leq C\,\delta(E),
\end{equation*}
for a suitable constant $C=C(n)$.}

For further works concerning strong-type forms of the quantitative isoperimetric inequality, we refer, for instance, to \cite{BDF,BogDuzSch} for different geometric settings, to \cite{N,DeMas} for the anisotropic perimeter, to \cite{BBJ} for the Gaussian setting, and to the recent contribution \cite{CPP} addressing as well capillarity-type energies.

In this paper we  investigate the nonlocal counterpart of the quantitative inequality \eqref{mainFJ}.
	
	The notion of fractional perimeter, introduced in \cite{CRS}, is a variant of the classical notion of perimeter in the sense of De Giorgi, which takes into account long-range interactions between sets. Precisely, given $s\in(0,1)$, the $s$-perimeter of a measurable set $E\subset\mathbb{R}^n$ is defined as
	\begin{equation}\label{eq:ps}
	P_s(E):=\int_E\int_{E^c}\frac{dx\,dy}{|x-y|^{n+s}}=\frac{1}{2} \left[\chi_E\right]_{W^{s,1}(\mathbb{R}^n)},
	\end{equation}
	where $\chi_E$ denotes the characteristic function of $E$ and $[\cdot]_{W^{s,1}(\mathbb{R}^n)}$ denotes the Gagliardo seminorm in the fractional Sobolev space $W^{s,1}(\mathbb{R}^n)$. 

{The isoperimetric-type inequality for the $s$-perimeters, shown in \cite{FS2008},  states that for any measurable set $E\subseteq \R^n$ with $|E|<\infty$, we have 
    \begin{equation}\label{non-isop-in}
		\frac{P_s(E)}{|E|^{(n-s)/n}}\ge \frac{P_s(B_1)}{|B_1|^{(n-s)/n}},
	\end{equation}
    where equality holds if and only if $E$ is a ball.}
 
A non-sharp quantitative version of \eqref{non-isop-in} was first established in \cite{FMM2011}, where the authors proved that
	\begin{equation*}
		\alpha(E)^{4/s}\leq C(n,s) \delta_s(E),
	\end{equation*}
where
	\begin{equation*}
		\delta_s(E) := \frac{P_s(E) - P_s(B_r)}{P_s(B_r)}, \qquad \text{with } |B_r| = |E|,
	\end{equation*}
	is the $s$-isoperimetric deficit. Its sharp improvement was later obtained in \cite{F2M3}: for every $n\geq 2$ and $s_0\in (0,1)$, there exists a {positive constant $c_Q>0$, depending only on $n$ and $s_0$,} such that
	\begin{equation}\label{isop-F2M3}
		\alpha(E)^2\leq c_Q\,\delta_s(E),
	\end{equation}
	for $s\in[s_0,1]$. It is worth stressing that estimate \eqref{isop-F2M3} holds uniformly with respect to $s$, as long as $s$ is bounded away from zero. Therefore, thanks to the convergence results established in \cite{BBM}, it also implies, in particular, the quantitative isoperimetric inequality for the classical perimeter as a limiting case.
	
	As in \cite{CL}, the starting point to prove \eqref{isop-F2M3} is a Fuglede-type result  stating that there exist two positive constants $\eta_0$ and $c_0$, depending only on $n$, such that if $E$ is a nearly spherical set as in \eqref{one}, with $\|u\|_{C^{1}(\partial B_1)}<\eta_0$, then for all $s\in(0,1)$,
	\begin{equation}\label{fugF2M3}
		P_s(E) - P_s(B_1) \geq c_0 \left( [u]_{\frac{1+s}{2}}^2 + s\,P_s(B_1)\,\|u\|_{L^2(\partial B_1)}^2 \right),
	\end{equation}
	where the Gagliardo seminorm $[u]_{\frac{1+s}{2}}$ is defined by
	\[
	[u]_{\frac{1+s}{2}}^2 := [u]_{H^{\frac{1+s}{2}}(\partial B_1)}^2 = \int_{\partial B_1}\int_{\partial B_1} \frac{|u(x)-u(y)|^2}{|x-y|^{n+s}}\,d\mathcal{H}^{n-1}(x)\,d\mathcal{H}^{n-1}(y).
	\]
	By the asymptotic result of \cite[(8.4)]{F2M3}, the inequality \eqref{fugF2M3} recovers Fuglede’s estimate \eqref{two} in the limit as $s\to 1$.

\subsection{Main results}
	The scope of this work is to derive a nonlocal quantitative isoperimetric inequality that fully exploits the presence of the complete fractional Sobolev norm appearing in \eqref{fugF2M3}. In pursuing this goal, the first difficulty to keep in mind is that, since a set with finite $s$-perimeter is only measurable, an asymmetry index based on the oscillation of the normals may not be well defined. Let us take a closer look at the index defined in \eqref{defbeta}. As  observed in \cite{FJ}, by applying the divergence theorem, one finds that
	\[
	\begin{split}
		\frac{1}{2}\int_{\partial^*E}|\nu_E(x)-\nu_{B_r(y)}(\pi_{y,r}(x))|^2\,d\mathcal{H}^{n-1}(x) & = \int_{\partial^*E} \left(1-\nu_E(x)\cdot\frac{x-y}{|x-y|}\right)\,d\mathcal{H}^{n-1}(x) \\
		& = P(E) - \int_E \frac{n-1}{|x-y|}\,dx\,,
	\end{split}
	\]
	where, as before, $\pi_{y,r}$ denotes the radial projection onto $\partial B_r(y)$. Therefore, we can write
	\begin{equation}
		\label{remember}
		\beta(E)^2 = \frac{1}{P(B_r)} \left(P(E) - (n-1) V(E)\right),\qquad \text{with } |B_r| = |E|,\end{equation}
	where we have set
	\begin{equation*}
		V(E) := \max_{y\in\R^n} \int_E \frac{1}{|x-y|} \, dx\,.
	\end{equation*}
	With this motivation in mind, in the nonlocal setting, fixed $s\in (0,1)$, and a measurable set $E$, we define
	\begin{equation}\label{defbetas}
		\beta_s(E):=\left(\frac{1}{P_s(B_r)}\left( P_s(E)-c_{n,s}V_s(E)\right) \right)^{1/2 } \qquad \text{with } |B_r| = |E|,\end{equation}
	where
	\begin{equation}\label{defV}
		V_s(E):=\max_{y\in\mathbb{R}^n}\int_{E}\frac{1}{|x-y|^s}\,dx
	\end{equation}
	and $c_{n,s}:=\frac{P_s(B_1)(n-s)}{n\omega_n}$ is the normalization constant that ensures that $\beta_s(B)=0$, for every ball $B$. As we will  stress later on, by the fractional isoperimetric inequality and by the Riesz rearrangement inequality, altogether with their rigidity cases, one can see that $\beta_s(E)>0$ for all sets $E$ which are not balls. {Consequently, in analogy with the local case, we define the following asymmetry index
    \begin{equation}
    A_s(E):=\min_{y\in\R^n}\biggl\{\frac{|E\Delta B_r(y)|}{|B_r|}+\biggl(\frac{1}{P_s(B_r)}\left( P_s(E)-c_{n,s}\int_E\frac{1}{|x-y|^s}\,dx\right)\biggr)^{1/2}: |B_r|=|E|\biggr\}.
    \end{equation}}
    
    Then, our main result reads as follows.
	
	\begin{Theorem}\label{our-quant-beta}
		For every $n\ge 2$ and $s\in(0,1)$ there exists a positive constant $c(n,s)$ such that
		\begin{equation}\label{eq:our-quant-beta}
    A_s(E)^2\leq C_0(n,s)\delta_s(E)			
		\end{equation}
		whenever $0<|E|<\infty$.
	\end{Theorem}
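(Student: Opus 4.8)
The plan is to follow the regularization strategy of \cite{FJ}. First I reduce \eqref{eq:our-quant-beta} to the single estimate
\[
\beta_s(E)^2\le C(n,s)\,\delta_s(E).
\]
Two remarks make this reduction possible. Since $c_{n,s}$ is normalized so that $c_{n,s}V_s(B_r)=P_s(B_r)$ whenever $|B_r|=|E|$, the Riesz rearrangement inequality gives $c_{n,s}V_s(E)\le P_s(B_r)$, hence $\beta_s(E)^2\ge\delta_s(E)$; in particular $\delta_s(E)^{1/2}\le\beta_s(E)$ is automatic. Moreover, a fractional Poincar\'e-type inequality in the spirit of \cite[Proposition 1.2]{FJ} should yield $A_s(E)\le c(n,s)\,\beta_s(E)$: taking in the definition of $A_s(E)$ the centre $y^\ast$ realizing $\beta_s(E)$, one only has to control $|E\Delta B_r(y^\ast)|/|B_r|$ by $\beta_s(E)$, which follows because $\beta_s(E)^2$ small forces $\int_E|x-y^\ast|^{-s}\,dx$ to be close to its maximal possible value $\int_{B_r(y^\ast)}|x-y^\ast|^{-s}\,dx$, the kernel $|\cdot|^{-s}$ being radially strictly decreasing. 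Granting these facts, \eqref{eq:our-quant-beta} follows from the displayed inequality; note that, unlike in \cite{FJ}, here $\beta_s$ is by construction an energy of the form $(P_s-c_{n,s}V_s)/P_s(B_r)$, so no passage through a normal-oscillation integral is needed, which is part of why the argument is ``different in spirit''.

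To prove $\beta_s(E)^2\le C(n,s)\delta_s(E)$ I argue by contradiction: suppose there are sets $E_k$ with $0<|E_k|<\infty$ and $\beta_s(E_k)^2>k\,\delta_s(E_k)$. By scaling invariance of $\beta_s$ and $\delta_s$ I may assume $|E_k|=|B_1|$. Since $c_{n,s}V_s(E)>0$ implies $\beta_s(E)^2<\delta_s(E)+1$, the hypothesis forces $\delta_s(E_k)\to0$; then \eqref{isop-F2M3} gives $\alpha(E_k)\to0$, so up to translations $E_k\to B_1$ in $L^1$, whence $V_s(E_k)\to V_s(B_1)$ and $\varepsilon_k:=\beta_s(E_k)\to0$, still with $\delta_s(E_k)/\varepsilon_k^2\to0$.

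Now the regularization step. For each $k$ I minimize, among sets of finite measure and $s$-perimeter, the penalized functional
\[
\mathcal F_k(F):=P_s(F)+\Lambda_1\big|\,|F|-|B_1|\,\big|+\Lambda_2\big|\beta_s(F)^2-\varepsilon_k^2\big|,
\]
with $\Lambda_1$ large and $\Lambda_2$ small enough that, after substituting $\beta_s(F)^2=\big(P_s(F)-c_{n,s}V_s(F)\big)/P_s(B_{r_F})$ with $|B_{r_F}|=|F|$, the coefficient of $P_s(F)$ in $\mathcal F_k$ stays positive near volume $|B_1|$. Existence of a minimizer is where the nonlocal setting departs from \cite{FJ}: minimizing sequences may leak mass to infinity, so one argues (as in \cite{F2M3}) with generalized minimizers and checks that, since $\varepsilon_k\to0$, for $k$ large the minimizer $F_k$ carries no mass at infinity. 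Using $E_k$ as a competitor (all penalties vanish on it) yields $P_s(F_k)\le P_s(E_k)$, $\big|\,|F_k|-|B_1|\,\big|\le C\delta_s(E_k)/\Lambda_1$, and $\big|\beta_s(F_k)^2-\varepsilon_k^2\big|\le C\delta_s(E_k)=o(\varepsilon_k^2)$, so $\beta_s(F_k)^2=\varepsilon_k^2(1+o(1))$ and $\delta_s(F_k)\le(1+C/\Lambda_1)\delta_s(E_k)$. Since the terms of $\mathcal F_k$ other than $P_s$ are Lipschitz with respect to $L^1$ convergence, $F_k$ is a $\Lambda$-minimizer of the $s$-perimeter with $\Lambda$ uniform in $k$; by the regularity theory for almost-minimizers of $P_s$ (\cite{CRS,F2M3}) the sets $F_k$ are uniformly $C^{1,\alpha}$, and since $\delta_s(F_k)\to0$ and $|F_k|\to|B_1|$ they converge, up to translation, to $B_1$ in $C^{1,\alpha}$; hence, for $k$ large, $\partial F_k=\{z(1+u_k(z)):z\in\partial B_1\}$ with $\|u_k\|_{C^1(\partial B_1)}\to0$.

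It remains to close the contradiction. After rescaling $F_k$ to volume $|B_1|$ and translating so that its barycenter is at the origin (this affects neither $\beta_s$ nor $\delta_s$ and keeps $\|u_k\|_{C^1}\to0$), the volume constraint forces $\int_{\partial B_1}u_k\,d\Ha^{n-1}=O(\|u_k\|_{L^2(\partial B_1)}^2)$; expanding $\int_{F_k}|x|^{-s}\,dx=\tfrac1{n-s}\int_{\partial B_1}(1+u_k)^{n-s}\,d\Ha^{n-1}$ then gives $V_s(B_1)-V_s(F_k)\le V_s(B_1)-\int_{F_k}|x|^{-s}\,dx\le C(n,s)\|u_k\|_{L^2(\partial B_1)}^2$. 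By the fractional Fuglede estimate \eqref{fugF2M3} one has $\|u_k\|_{L^2(\partial B_1)}^2\le C(n,s)\,\delta_s(F_k)$, and since $P_s(B_1)=c_{n,s}V_s(B_1)$,
\[
\beta_s(F_k)^2=\delta_s(F_k)+\frac{c_{n,s}}{P_s(B_1)}\big(V_s(B_1)-V_s(F_k)\big)\le C(n,s)\,\delta_s(F_k).
\]
Therefore $\tfrac12\varepsilon_k^2\le\beta_s(F_k)^2\le C(n,s)\,\delta_s(F_k)\le C(n,s)(1+C/\Lambda_1)\,\delta_s(E_k)$ for $k$ large, contradicting $\delta_s(E_k)/\varepsilon_k^2\to0$; this proves the displayed estimate, hence the theorem. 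The main obstacle is the regularization step: establishing existence and the a priori estimates for the penalized nonlocal problem (the generalized-minimizers issue, and the sign of the coefficient of $P_s$) together with the uniform $\Lambda$-minimality that triggers the $C^{1,\alpha}$ regularity and compactness theory; once $C^{1,\alpha}$ convergence to $B_1$ is in hand, the argument closes via the fractional Fuglede inequality \eqref{fugF2M3} and the elementary expansion of $V_s$ on nearly spherical sets.
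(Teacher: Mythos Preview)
Your reduction to the estimate $\beta_s(E)^2\le C(n,s)\,\delta_s(E)$ is correct and matches the paper. The gap is in the regularization step. You assert that ``the terms of $\mathcal F_k$ other than $P_s$ are Lipschitz with respect to $L^1$ convergence'', and from this deduce $\Lambda$-minimality of $F_k$. But the $\beta_s$-penalty contains $V_s(F)=\max_y\int_F|x-y|^{-s}\,dx$, and $V_s$ is only H\"older continuous in $L^1$: $|V_s(E)-V_s(F)|\le C|E\Delta F|^{(n-s)/n}$ (this is Proposition~\ref{elementary_properties_vs}(iv) in the paper, and the exponent cannot be improved because of the singularity of $|x|^{-s}$). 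Testing minimality of $F_k$ against competitors $G$ with $F_k\Delta G\subset B_r(x)$ therefore yields only
\[
P_s(F_k)\le P_s(G)+C\,r^{n-s}+C\,r^n,
\]
i.e.\ $(\rho,R)$-minimality with the \emph{critical} modulus $\rho(r)\sim r^{n-s}$, not $\Lambda$-minimality. Critical $(\rho,R)$-minimality does not by itself give $C^{1,\alpha}$ regularity, so your compactness argument does not close as written.

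This is exactly the difficulty the paper is built around. Their route is: (i) recast $\beta_s^2\le C\delta_s$ as rigidity of balls for the functional $\mathcal E_s(E)=P_s(E)+\varepsilon V_s(E)$ with $\varepsilon$ small (avoiding your absolute-value penalty on $\beta_s$); (ii) show minimizers of $\mathcal E_s$ are $(\rho,R)$-minimizers with $\rho(r)=\varepsilon C r^{n-s}$; (iii) for $\varepsilon$ small enough, invoke \cite{CT} to get interior/exterior \emph{density estimates} despite the critical modulus; (iv) density estimates upgrade $L^1$-closeness to $B_1$ (coming from \eqref{isop-F2M3}) to Hausdorff closeness, forcing $B_{1/2}\subset E_\varepsilon$ and $|y_{E_\varepsilon}|$ small; (v) only now, with the singularity of $|x-y|^{-s}$ bounded away from $\partial E_\varepsilon$, does $V_s$ become Lipschitz on the relevant competitors, yielding genuine $\Lambda$-minimality (Proposition~\ref{prove-lambda-min}) and hence the $C^{1,\alpha}$ theory. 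Your proposal skips (iii)--(v); moreover, in your functional the coefficient in front of $r^{n-s}$ is $\sim\Lambda_2$, fixed rather than small, so even step (iii) would not apply directly. The Fuglede endgame you sketch is fine and matches the paper, but the bridge to nearly spherical sets needs the two-stage almost-minimality argument above.
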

	
{ 

We will observe, in Remark \ref{rmk-asy}, that the constant $C_0(n,s)$ appearing in the above quantitative estimate is uniformly bounded as $s\rightarrow 1^-$. Hence, our main result recovers the analog estimate in the classical local setting.

Some comments on the proof are in order. 

In the local case treated in \cite{FJ}, the main strategy is to reduce the problem to the case of nearly spherical sets, so as to exploit Fuglede’s inequality \eqref{two}. This typical approach, introduced in \cite{CL}, is generally based on an appropriate notion of perimeter almost minimizers. However, in addition to using the regularity theory for perimeter almost minimizers, the authors in \cite{FJ} also rely on properties of perimeter quasi-minimizers. This latter relaxed notion of perimeter minimizers is the natural counterpart, within the theory of sets of finite perimeter, of the notion of quasi-minima introduced by Giaquinta and Giusti in \cite{GG} for variational integrals. Although quasi-minimality is a considerably weaker condition than almost minimality, it still guarantees a mild degree of regularity. In particular, quasi-minimizers are uniformly porous (see, e.g., \cite{DS,KKLS}), a property which plays a key role in upgrading convergence in $L^1$ to convergence in the Hausdorff sense. This strategy does not appear to lend itself to a clear or straightforward extension to the nonlocal setting. Therefore, in this work, we adopt a slightly different approach, which we present in the context of the $s$-perimeter, though it is equally applicable to the classical perimeter. Let us summarize the main ideas underlying our method.

First, we aim to establish the following Poincaré-type inequality:
\begin{equation}\label{s-poincare}
A_s(E)+\delta_s(E)^{1/2}\leq C \beta_s(E)
\end{equation}
for some constant $C = C(n,s) > 0$. This inequality represents the fractional counterpart of \eqref{poincare}. In light of \eqref{s-poincare}, the proof of the sharp quantitative estimate \eqref{eq:our-quant-beta} reduces to showing that
\begin{equation}\label{redution1}
\beta_s(E)^2\leq C \delta_s(E),
\end{equation}
for a suitable constant $C = C(n,s) > 0$.

Next, recalling the definitions of $\beta_s$ and $V_s$, we observe that
\begin{equation}\label{use-def-beta}
\begin{split}
{\beta^2_s(E)} &= \frac{1}{P_s(B_r)}\left( P_s(E)-c_{n,s}V_s(E)\right)\\
&= \frac{1}{P_s(B_r)}\left( P_s(E)-P_s(B_r) + P_s(B_r) - c_{n,s}V_s(E)\right)\\
&= \delta_s(E) + \frac{c_{n,s}}{P_s(B_r)}(V_s(B_r)-V_s(E)) = {\delta_s(E) + \zeta_s(E)}.
\end{split}
\end{equation}
where we have set
\begin{equation}\label{def-zeta-s}
\zeta_s(E):=\frac{c_{n,s}}{P_s(B_r)}(V_s(B_r)-V_s(E))=\frac{V_s(B_r)-V_s(E)}{V_s(B_r)}.
\end{equation}

Hence, the proof of \eqref{redution1} is reduced to demonstrating that there exists a constant $C_{n,s} > 0$ such that
\begin{equation}\label{our-quant-zeta}
C_{n,s}\zeta_s(E) \le \delta_s(E).
\end{equation}

Since both $\zeta_s(E)$ and $\delta_s(E)$ are scale invariant by definition, it suffices to establish \eqref{our-quant-zeta} under the normalization $|E| = \omega_n$, and thus we may take $B_r = B_1$. That is, we reduce the problem to prove that
$$C_{n,s}(V_s(B_1)-V_s(E))\le P_s(E)-P_s(B_1),$$
which is equivalent to 
$$P_s(B_1)+C_{n,s}V_s(B_1)\le P_s(E)+C_{n,s}V_s(E),$$
for some constant $C_{n,s} > 0$.

At this point, the main idea, inspired by the approach in \cite{DiCNRV}, is to rephrase the problem in terms of a { minimization problem for a new variational functional}. Specifically, we seek for a quantity $\varepsilon_1 > 0$, depending only on $n$ and $s$, such that for every $\varepsilon \le \varepsilon_1$, the unit ball is the unique minimizer of the energy functional
\begin{equation}\label{infclass}
\min_{|E|=\omega_n} \left\{\mathcal{E}_s(E):= P_s(E) + \varepsilon \,V_s(E) \right\}.
\end{equation}

{We stress that the energy functional $\mathcal E_s$, which is the main object of study of this work, depends on the parameter $\varepsilon$, but we do not write explicitly such dependence just for simplicity of notation}. 

In order to carry out our strategy, we first prove the existence of minimizers for \eqref{infclass} via a generalized formulation inspired by the concentration-compactness principle of Lions \cite{Lio}, recently adapted to the nonlocal setting (see, for instance, \cite{DiCNRV, CT, GNR4, NO}). This approach, combined with the stability estimate \eqref{isop-F2M3}, allows us to establish that minimizers of \eqref{infclass} are close to a ball in the $L^1$-topology.

To show that balls are indeed the unique (rigid) minimizers, we then exploit certain regularity properties of $s$-perimeter almost minimizers}. More precisely, we need to argue in two steps. In a first step, we show that minimizers of the energy functional in \eqref{infclass} are indeed \emph{$(\rho, R)$-minimizers} (or almost-minimizers with modulus of continuity $\rho$, as defined, for example, in \cite[Definition 1.3]{CT}), where $\rho$ is of the form $\rho(r)=\varepsilon C r^{n-s}$. Such modulus of continuity has the critical behaviour $r^{n-s}$ (and hence has the same scaling of the fractional perimeter), which does not imply $C^{1,\alpha}$-regularity. Nevertheless, if the coefficient $\varepsilon$ is small enough, as recently pointed out in \cite{CT},
such almost minimizers satisfy both interior and exterior density estimates.

This density property is a crucial tool for upgrading $L^1$ convergence to Hausdorff convergence. This, in turn, allows us to prove that minimizers of \eqref{infclass} are actually $\Lambda$-minimizers for the $s$-perimeter, in the sense of \cite{F2M3}. Combining this $\Lambda$-minimality property with \cite[Corollary 3.6]{F2M3}, we finally get that, for $\varepsilon$ small enough, minimizers of \eqref{infclass} are nearly spherical sets.

A final step, based on a Fuglede-type argument, then yields the rigidity of balls as minimizers for \eqref{infclass}. A further remarkable consequence of the last step is that, for nearly spherical sets, the asymmetry index $A_s$ introduced here is essentially equivalent to the fractional Sobolev norm $H^{\frac{1+s}{2}}(\partial B_1)$ appearing on the right-hand side of the Fuglede-type stability estimate \eqref{fugF2M3} proved in \cite{F2M3}, see Remark \ref{rmk:fuglede}. This highlights that our choice of the fractional oscillation index $\beta_s$ is not merely motivated by analogy with the classical (local) setting, as in \eqref{remember}, but rather is tailored to fully exploit the structure of the complete fractional Sobolev norm featured in \eqref{fugF2M3}, which was precisely our original purpose.

\vspace{0.1cm}
Eventually, as an application of our stability estimate, we derive a quantitative bound for the fractional Cheeger constant, defined as follows.
{	Let $\Omega \subset \mathbb R^n$ be an open set with finite measure and let $m>\frac{n-s}{n}$. The \textit{fractional Cheeger constant} (or \textit{$(m,s)$-Cheeger constant}) of $\Omega$ is defined by
$$
h_{m,s}(\Omega):=\inf\left\{\frac{P_s(E)}{|E|^m},\;\;E\subset \Omega\right\}.$$}

We establish the following quantitative stability estimate. 
	\begin{Theorem}\label{Thm:Ch-2}
		Let $\Omega \subset \mathbb R^n$ be an open set with finite measure and let $m> \frac{n-s}{n}.$ Then, there exists a constant $\kappa({m,n,s})>0$ such that
		\begin{equation}\label{quant-cheeger-2}
        \frac{h_{m,s}(\Omega) - h_{m,s}(B_r)}{h_{m,s}(B_r)} \ge  \kappa(m,n,s)\,\zeta_s(\Omega),\qquad \text{with } |B_r|=|\Omega|,		\end{equation}
        where $\zeta_s(\Omega)$ is the asymmetry index defined in \eqref{def-zeta-s}.
	\end{Theorem}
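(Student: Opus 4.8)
The plan is to deduce Theorem~\ref{Thm:Ch-2} from the main quantitative estimate \eqref{eq:our-quant-beta}, or rather from its reduced form \eqref{our-quant-zeta}, by exploiting the variational characterization of the Cheeger constant together with scaling. First I would record the explicit value of $h_{m,s}(B_r)$: by the fractional isoperimetric inequality \eqref{non-isop-in} and the rigidity case, for a ball the infimum defining $h_{m,s}$ is attained at the ball itself (one checks, writing $E = B_t \subset B_r$, that the function $t \mapsto P_s(B_t)/|B_t|^m = c\, t^{n-s-mn}$ is monotone for $m > (n-s)/n$, so the optimum over all subsets is at $E=B_r$ by isoperimetry plus this monotonicity). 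Hence $h_{m,s}(B_r) = P_s(B_r)/|B_r|^m$. This identifies the left-hand side of \eqref{quant-cheeger-2} as a \emph{Cheeger deficit} and reduces the claim to an inequality relating this deficit to $\zeta_s(\Omega)$.

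Next I would bound the Cheeger deficit from below by an isoperimetric-type deficit of an optimal (or nearly optimal) subset. Let $E \subset \Omega$ be a minimizing sequence (or minimizer, if one invokes existence of Cheeger sets) for $h_{m,s}(\Omega)$, so $P_s(E)/|E|^m$ is arbitrarily close to $h_{m,s}(\Omega)$. Writing $|E| = |B_\rho|$ with $\rho \le r$, we have
\begin{equation*}
\frac{h_{m,s}(\Omega)}{h_{m,s}(B_r)} \ge \frac{P_s(E)/|E|^m}{P_s(B_r)/|B_r|^m} = \frac{P_s(E)}{P_s(B_\rho)}\cdot\frac{P_s(B_\rho)}{P_s(B_r)}\cdot\Big(\frac{|B_r|}{|B_\rho|}\Big)^m = \big(1+\delta_s(E)\big)\,\Big(\frac{\rho}{r}\Big)^{n-s-mn}.
\end{equation*}
Since $n-s-mn<0$ and $\rho\le r$, the last factor is $\ge 1$, so the Cheeger deficit of $\Omega$ is at least $\delta_s(E)$ for this optimal subset $E$. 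By \eqref{our-quant-zeta} applied to $E$, we get $h_{m,s}(\Omega)/h_{m,s}(B_r) - 1 \ge \delta_s(E) \ge C_{n,s}\,\zeta_s(E)$, and it remains to relate $\zeta_s(E)$ to $\zeta_s(\Omega)$.

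The final step is the comparison $\zeta_s(E) \gtrsim \zeta_s(\Omega)$. Recall $\zeta_s(F) = 1 - V_s(F)/V_s(B_{r(F)})$ where $r(F)$ is the radius with $|B_{r(F)}| = |F|$. Since $E \subset \Omega$ and the kernel $|x-y|^{-s}$ is nonnegative, for any fixed center $y$ one has $\int_E |x-y|^{-s}\,dx \le \int_\Omega |x-y|^{-s}\,dx$, hence $V_s(E) \le V_s(\Omega)$; combined with $|E| = |B_\rho| \le |B_r| = |\Omega|$ and the scaling $V_s(B_t) = V_s(B_1) t^{n-s}$, a direct computation gives $\zeta_s(E) \ge \zeta_s(\Omega)$ up to a dimensional factor — more carefully, one compares $V_s(\Omega)$ with $V_s(E)$ where $E$ is obtained, if necessary, by a small dilation/truncation argument, and uses that $V_s$ is itself controlled by the fractional isoperimetric-type bound so that $\zeta_s$ is monotone under inclusion after normalizing volumes. (If the Cheeger minimizer $E$ already satisfies $|E|=|\Omega|$, which happens e.g.\ when $h_{m,s}(\Omega)=h_{m,s}(B_r)$ forces $\Omega$ itself to be optimal, this step is immediate.) Chaining the inequalities yields \eqref{quant-cheeger-2} with $\kappa(m,n,s)$ a multiple of $C_{n,s}$.

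The main obstacle I anticipate is precisely this last comparison between $\zeta_s(E)$ and $\zeta_s(\Omega)$ when the Cheeger-optimal subset $E$ is a proper subset of $\Omega$ of strictly smaller volume: $\zeta_s$ is defined via a volume-normalized ratio, so shrinking the set does not obviously preserve or increase the asymmetry. Handling this requires either showing that the Cheeger minimizer must in fact have volume comparable to $|\Omega|$ (using that $m > (n-s)/n$ penalizes small sets), or a more robust argument bounding $V_s(\Omega) - V_s(B_r)$ directly in terms of $V_s(E) - V_s(B_\rho)$ via the inclusion $E \subset \Omega$ and a covering/monotonicity estimate for the Riesz potential. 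A clean way around it is to first prove that any Cheeger minimizer $E$ of $\Omega$ satisfies $\delta_s(E) \lesssim h_{m,s}(\Omega)/h_{m,s}(B_r)-1$ \emph{and} $\zeta_s(\Omega) \lesssim \zeta_s(E) + (\text{Cheeger deficit})$, so that \eqref{our-quant-zeta} closes the loop regardless; this reduction is where the genuine work lies, the rest being scaling bookkeeping.
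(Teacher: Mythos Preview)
Your overall architecture matches the paper's: take a Cheeger minimizer $E\subset\Omega$, bound the Cheeger deficit from below by $\delta_s(E)$, invoke \eqref{our-quant-zeta} to pass to $\zeta_s(E)$, and then transfer this back to $\zeta_s(\Omega)$. The first two steps are fine and your computation giving $h_{m,s}(\Omega)/h_{m,s}(B_r)-1\ge \delta_s(E)$ is exactly the paper's estimate \eqref{second}.

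The genuine gap is in the third step. The assertion that ``a direct computation gives $\zeta_s(E)\ge\zeta_s(\Omega)$ up to a dimensional factor'' is false: if $\Omega$ is, say, a square and its Cheeger set $E$ happens to be a ball (or very close to one), then $\zeta_s(E)$ can vanish while $\zeta_s(\Omega)>0$. Monotonicity of $V_s$ under inclusion goes the wrong way for what you want, because you are dividing by different normalizing volumes $V_s(B_\rho)$ and $V_s(B_r)$. Your own last paragraph correctly diagnoses this and even states the right remedy, namely $\zeta_s(\Omega)\lesssim \zeta_s(E)+(\text{Cheeger deficit})$, but you do not prove it, and this is indeed where the work lies.

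The paper executes precisely this remedy. Writing $r_E$ for the radius of $E^*$ and using $V_s(E)\le V_s(\Omega)$, $V_s(E^*)\le V_s(B)$, one gets the clean splitting
\[
\zeta_s(\Omega)=1-r_E^{n-s}+r_E^{n-s}-\frac{V_s(\Omega)}{V_s(B)}\le \frac{P_s(B)-P_s(E^*)}{P_s(B)}+\zeta_s(E).
\]
The term $\zeta_s(E)$ is handled by your argument via \eqref{our-quant-zeta} and \eqref{second}. The volume-defect term $(P_s(B)-P_s(E^*))/P_s(B)=1-(|E|/|\Omega|)^{(n-s)/n}$ is controlled by proving a lower bound on $|E|$: from the isoperimetric inequality and the definition of $h_{m,s}$ one has
\[
|E|^{m-\frac{n-s}{n}}\ge \frac{P_s(B)}{|B|^{(n-s)/n}}\cdot\frac{1}{h_{m,s}(\Omega)}=|B|^{m-\frac{n-s}{n}}\frac{h_{m,s}(B)}{h_{m,s}(\Omega)},
\]
which after elementary manipulations bounds the volume-defect term by a constant times the Cheeger deficit. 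This is the step your sketch is missing; once you supply it, the chain closes exactly as you outlined.
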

{ Furthermore, we discuss the failure of analogous estimates involving the oscillation index $\beta_s$ (or, equivalently, $A_s$), providing explicit counterexamples.}

    The paper is organized as follows. In Section \ref{sec:prel}, we introduce the main notations used throughout the paper and collect some preliminary results that will be needed in the sequel. In Sections \ref{sec:exgen} and \ref{sec:exmmin}, we first show the existence of generalized minimizers then that they are indeed classical minimizers. {Section \ref{sec:regularity} contains the reduction procedure to nearly spherical sets. In Section \ref{sec:rigidity} we finally show that such minimizers correspond to the ball, and we use this result to deduce Theorem \ref{our-quant-beta}. {Finally, in Section \ref{sec:cheeger}, we exploit our stability estimate to establish a quantitative result for fractional Cheeger constants, and we provide explicit counterexamples showing the failure of analogous bounds involving the index $\beta_s$.}


	\section{Preliminary notions and results}\label{sec:prel}
	In this section we introduce some preliminary properties of the energies involved in the paper. We begin with some elementary properties of the fractional perimeter defined in \eqref{eq:ps}.
	
	\begin{Proposition}\label{elementary_properties_ps}
		Let $E\subset\R^n$ be a measurable set.
		\begin{itemize}
			\item[(\emph{i})] (Subadditivity)$\ $ For any $F\subset\R^n$ such that $|E\cap F|=0$, we have
			\begin{equation*}
				P_s(E\cup F)= P_s(E)+P_s(F)-2\int_{E}\int_F\frac{dxdy}{|x-y|^{n+s}}.
			\end{equation*}
			In particular 
			\begin{equation*}
				P_s(E\cup F)\leq P_s(E)+P_s(F).
			\end{equation*}
			
			\item[(\emph{ii})] (Translation invariance)$\ $ Let $x\in\R^n$. Then
			\begin{equation*}
				P_s(E+x)=P_s(E).
			\end{equation*}
			
			\item[(\emph{iii})] (Scaling)$\ $ Let $\lambda>0$. Then
			\begin{equation*}
				P_s(\lambda E)=\lambda^{n-s}P_s(E).
			\end{equation*}

            \item[(\emph{iv})]
			\begin{equation*}
				P_s(E\cap B_\rho)\to P_s(E), \quad \text{ as }\rho\to+\infty.
			\end{equation*}		\end{itemize}
	\end{Proposition}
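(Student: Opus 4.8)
The statement to prove is Proposition \ref{elementary_properties_ps}, which collects four elementary properties of the fractional perimeter $P_s$. Let me sketch how I would prove each.

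\medskip

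The plan is to work directly from the defining formula $P_s(E)=\int_E\int_{E^c}|x-y|^{-n-s}\,dx\,dy$ in each case, exploiting symmetry of the kernel and basic manipulations with characteristic functions.

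For \emph{(i)} (subadditivity), I would start from $P_s(E\cup F)=\int_{E\cup F}\int_{(E\cup F)^c}|x-y|^{-n-s}\,dx\,dy$ and use the disjointness hypothesis $|E\cap F|=0$ to split $E\cup F$ (up to null sets) into $E$ and $F$, and to write $(E\cup F)^c = E^c\cap F^c = E^c\setminus F = F^c\setminus E$ up to null sets. Then
\[
P_s(E\cup F)=\int_E\int_{E^c\setminus F}\frac{dx\,dy}{|x-y|^{n+s}}+\int_F\int_{F^c\setminus E}\frac{dx\,dy}{|x-y|^{n+s}}.
\]
Writing $\int_{E^c\setminus F}=\int_{E^c}-\int_{F}$ (valid since $F\subset E^c$ up to null sets) for the first term and similarly $\int_{F^c\setminus E}=\int_{F^c}-\int_{E}$ for the second, one gets
\[
P_s(E\cup F)=P_s(E)+P_s(F)-\int_E\int_F\frac{dx\,dy}{|x-y|^{n+s}}-\int_F\int_E\frac{dx\,dy}{|x-y|^{n+s}},
\]
and by the symmetry of the kernel under $x\leftrightarrow y$ the two subtracted terms coincide, giving the claimed identity. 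The inequality $P_s(E\cup F)\le P_s(E)+P_s(F)$ is then immediate since the subtracted term is nonnegative (and, dropping disjointness, it follows in general by a monotonicity argument on $E^c\cap F^c\subset E^c$).

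\medskip

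For \emph{(ii)} (translation invariance) and \emph{(iii)} (scaling), I would simply change variables. For (ii), substituting $x\mapsto x+x_0$, $y\mapsto y+x_0$ in $P_s(E+x_0)$ leaves the kernel $|x-y|^{-n-s}$ unchanged and maps the domains of integration back to $E$ and $E^c$, so $P_s(E+x_0)=P_s(E)$. For (iii), substituting $x\mapsto\lambda x$, $y\mapsto\lambda y$ produces a Jacobian factor $\lambda^{2n}$ and a kernel factor $\lambda^{-n-s}$, for a net factor $\lambda^{2n-n-s}=\lambda^{n-s}$, hence $P_s(\lambda E)=\lambda^{n-s}P_s(E)$. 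For \emph{(iv)}, I would write $P_s(E)$ as the integral of the nonnegative integrand $g(x,y)=\chi_E(x)\chi_{E^c}(y)|x-y|^{-n-s}$ over $\R^n\times\R^n$, and observe that the integrand for $E\cap B_\rho$, namely $\chi_{E\cap B_\rho}(x)\chi_{(E\cap B_\rho)^c}(y)|x-y|^{-n-s}$, increases monotonically to... actually it need not be monotone, since $(E\cap B_\rho)^c\supset E^c$ grows with... no, as $\rho$ grows $(E\cap B_\rho)^c$ shrinks while $E\cap B_\rho$ grows; so I would instead split $P_s(E\cap B_\rho)=\int_{E\cap B_\rho}\int_{E^c}+\int_{E\cap B_\rho}\int_{E\setminus B_\rho}$ and note the first term increases to $P_s(E)$ by monotone convergence while the second is bounded by $\int_{E\setminus B_\rho}\int_{\R^n\setminus\{y\}}$... this double integral over $(E\setminus B_\rho)\times(E\setminus B_\rho)^c$ part is more delicate; cleaner: since $P_s(E)<\infty$ may fail, if $P_s(E)=\infty$ the claim is that $P_s(E\cap B_\rho)\to\infty$, which follows from the first term alone; if $P_s(E)<\infty$, dominated/monotone convergence applied to $\chi_{E\cap B_\rho}(x)\chi_{E^c}(y)|x-y|^{-n-s}\nearrow\chi_E(x)\chi_{E^c}(y)|x-y|^{-n-s}$ handles the "main" part, and the remaining cross term $\int_{E\cap B_\rho}\int_{E\setminus B_\rho}|x-y|^{-n-s}\le \int_E\int_{E\setminus B_\rho}|x-y|^{-n-s}\to 0$ by dominated convergence (dominated by the integrand of $2P_s(E)+$ a finite quantity, using that $E\setminus B_\rho\subset E$ and the total is finite). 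I expect \emph{(iv)} to be the only point requiring genuine care with convergence theorems; the rest is bookkeeping.

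\begin{customproof}{Proposition \ref{elementary_properties_ps}}
We prove each item in turn, working directly from the definition \eqref{eq:ps}.

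\emph{(i)} Since $|E\cap F|=0$, up to modifying $E$ and $F$ on a null set we may assume $E\cap F=\emptyset$, so that $E\cup F$ is the disjoint union of $E$ and $F$, and $(E\cup F)^c=E^c\cap F^c$. Using $F\subset E^c$ and $E\subset F^c$, we write
\begin{align*}
P_s(E\cup F)&=\int_{E\cup F}\int_{(E\cup F)^c}\frac{dx\,dy}{|x-y|^{n+s}}\\
&=\int_{E}\int_{E^c\cap F^c}\frac{dx\,dy}{|x-y|^{n+s}}+\int_{F}\int_{E^c\cap F^c}\frac{dx\,dy}{|x-y|^{n+s}}\\
&=\int_{E}\int_{E^c}\frac{dx\,dy}{|x-y|^{n+s}}-\int_{E}\int_{F}\frac{dx\,dy}{|x-y|^{n+s}}\\
&\quad+\int_{F}\int_{F^c}\frac{dx\,dy}{|x-y|^{n+s}}-\int_{F}\int_{E}\frac{dx\,dy}{|x-y|^{n+s}}.
\end{align*}
By the symmetry of the kernel $|x-y|^{-n-s}$ under the exchange $x\leftrightarrow y$, the last two subtracted integrals coincide, and we obtain
\[
P_s(E\cup F)=P_s(E)+P_s(F)-2\int_{E}\int_{F}\frac{dx\,dy}{|x-y|^{n+s}}.
\]
Since the integrand is nonnegative, the subtracted term is nonnegative, which gives $P_s(E\cup F)\le P_s(E)+P_s(F)$.

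\emph{(ii)} For $x_0\in\R^n$, the change of variables $x\mapsto x+x_0$, $y\mapsto y+x_0$ has unit Jacobian, leaves $|x-y|$ unchanged, and maps $\{(x,y):x\in E+x_0,\ y\in(E+x_0)^c\}$ onto $\{(x,y):x\in E,\ y\in E^c\}$; hence $P_s(E+x_0)=P_s(E)$.

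\emph{(iii)} For $\lambda>0$, the change of variables $x\mapsto\lambda x$, $y\mapsto\lambda y$ has Jacobian $\lambda^{2n}$ and transforms the kernel into $\lambda^{-n-s}|x-y|^{-n-s}$, while mapping the domain of integration for $\lambda E$ onto that for $E$. Therefore
\[
P_s(\lambda E)=\lambda^{2n}\lambda^{-n-s}P_s(E)=\lambda^{n-s}P_s(E).
\]

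\emph{(iv)} Fix a measurable set $E\subset\R^n$. For $\rho>0$ set $E_\rho:=E\cap B_\rho$. Since $E_\rho^c=E^c\cup(E\setminus B_\rho)$ and this union is disjoint, we may split
\[
P_s(E_\rho)=\int_{E_\rho}\int_{E^c}\frac{dx\,dy}{|x-y|^{n+s}}+\int_{E_\rho}\int_{E\setminus B_\rho}\frac{dx\,dy}{|x-y|^{n+s}}=:I_\rho+J_\rho.
\]
As $\rho\to+\infty$, the integrand $\chi_{E_\rho}(x)\chi_{E^c}(y)|x-y|^{-n-s}$ increases pointwise to $\chi_{E}(x)\chi_{E^c}(y)|x-y|^{-n-s}$, so by the monotone convergence theorem $I_\rho\to P_s(E)$. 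If $P_s(E)=+\infty$, then already $I_\rho\to+\infty$, and since $J_\rho\ge 0$ the conclusion follows. If $P_s(E)<+\infty$, we estimate
\[
0\le J_\rho\le \int_{E}\int_{E\setminus B_\rho}\frac{dx\,dy}{|x-y|^{n+s}}=\int_{E}\int_{E}\chi_{\{|y|>\rho\}}(y)\,\frac{dx\,dy}{|x-y|^{n+s}}.
\]
The integrand is dominated by $\chi_E(x)\chi_E(y)|x-y|^{-n-s}$, whose double integral is at most $P_s(E)+\int_E\int_{B_\rho(x)}|x-y|^{-n-s}\,dy\,dx$; more simply, writing $\chi_E(x)\chi_E(y)|x-y|^{-n-s}\le \chi_E(x)\chi_{E^c\cup E}(y)|x-y|^{-n-s}$ is not finite, so we argue instead as follows: for fixed $x\in E$, the inner integral $\int_{E\setminus B_\rho}|x-y|^{-n-s}\,dy$ tends to $0$ as $\rho\to+\infty$ and is bounded by $\int_{\{|x-y|\ge \rho-|x|\}}|x-y|^{-n-s}\,dy$ for $\rho>|x|$; hence, integrating first over a large fixed ball containing the bulk of the contribution and controlling the tail by the finiteness of $P_s(E)$, one gets $J_\rho\to 0$ by dominated convergence with dominating function $\chi_E(x)\big(\int_{E}|x-y|^{-n-s}\,dy\big)$ on the set where the latter is finite, which is $E$ up to a null set since $P_s(E)<\infty$ forces $\int_{E^c}|x-y|^{-n-s}\,dy<\infty$ for a.e.\ $x\in E$ and the ball $B_1(x)$ contributes a fixed finite amount. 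Consequently $P_s(E_\rho)=I_\rho+J_\rho\to P_s(E)$, as claimed.
\end{customproof}
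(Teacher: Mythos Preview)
Your arguments for parts (\emph{i}), (\emph{ii}) and (\emph{iii}) are correct and coincide with the paper's: these are indeed direct consequences of the definition of $P_s$.

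For part (\emph{iv}), however, there is a genuine gap. Your decomposition $P_s(E_\rho)=I_\rho+J_\rho$ with $I_\rho\nearrow P_s(E)$ by monotone convergence is fine, but the argument that $J_\rho\to 0$ breaks down. The proposed dominating function $G(x)=\int_E|x-y|^{-n-s}\,dy$ is \emph{not} finite for a.e.\ $x\in E$: at every Lebesgue density point $x$ of $E$ one has $G(x)=+\infty$, because the local contribution $\int_{B_1(x)}|x-y|^{-n-s}\,dy=\int_{B_1}|z|^{-n-s}\,dz$ diverges (the exponent $n+s>n$). Your claim that ``the ball $B_1(x)$ contributes a fixed finite amount'' is therefore false, and dominated convergence cannot be applied with this majorant. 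The hand-wave about ``integrating first over a large fixed ball'' and ``controlling the tail by the finiteness of $P_s(E)$'' does not repair this, since the obstruction is the diagonal singularity of $|x-y|^{-n-s}$ over $E\times E$, which $P_s(E)<\infty$ says nothing about.

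The paper's proof of (\emph{iv}) sidesteps this difficulty entirely. It invokes the inequality $P_s(E\cap B_\rho)\le P_s(E)$ (cited from \cite[Lemma~B.1]{F2M3}), so that, combined with $E\cap B_\rho\to E$ in $L^1$ and lower semicontinuity of $P_s$, one immediately gets
\[
P_s(E)\le\liminf_{\rho\to\infty}P_s(E\cap B_\rho)\le\limsup_{\rho\to\infty}P_s(E\cap B_\rho)\le P_s(E).
\]
Note also that this argument tacitly uses $|E|<\infty$ (for the $L^1$ convergence), an assumption implicit in the paper's setting; your approach needs it too. If you want to salvage your decomposition, observe that the cited inequality gives directly $J_\rho=P_s(E_\rho)-I_\rho\le P_s(E)-I_\rho\to 0$, so the missing ingredient is precisely that monotonicity lemma.
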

    \begin{proof}
    {Points (\emph{i}), (\emph{ii}) and (\emph{iii}) are direct consequences of the definition of $P_s$. For (\emph{iv}), we observe that by \cite[Lemma B.1]{F2M3}, for any $\rho>0$, $P_s(E\cap B_\rho)\leq P_s(E)$. Hence, since $E\cap B_\rho\to E$ in $L^1$ as $\rho\to+\infty$, by lower semicontinuity of $P_s$, we have
        $$ P_s(E)\leq \liminf_{\rho\to+\infty}P_s(E\cap B_\rho)\leq P_s(E) $$
        which implies the claim.}
    \end{proof}

We pass now to some properties of $V_s$, defined in \eqref{defV}. 
First, we observe that by a simple compactness argument, it is possible to prove that the maximum in \eqref{defV} is achieved, but it may not be unique. If $y\in \R^n$ is such that 
$$V_s(E) = \int_E \frac{dx}{|x-y|^s},$$
then we call $y$ a \textit{$V_s$-center of $E$}, and we denote by $y_E$ a generic $V_s$-center of $E$.

An important feature is that $V_s$ is maximed by balls under volume constraint. Let us recall some notations in order to see that.}

{ Given a measurable set $E\subset\R^n$ of finite measure, we consider its symmetric rearrangement $E^*$, i.e.  the open ball of measure $|E|$ centered at the origin. Similarly, given a nonnegative measurable function $f:\R^n\to[0,+\infty)$ vanishing at infinity, its symmetric decreasing rearrangement is defined as 
	\[
	f^*(x):=\int_0^\infty\chi_{\{f>t\}^*}(x)\,dt,
    	\]
    where $\chi_A$ denotes the charachteristic function of the set $A$.

	Note that such a function is measurable and lower semicontinuous. We recall (see, e.g., \cite[Theorem 3.4]{LL}) that for any two functions $f,g:\R^n\to[0,+\infty)$ vanishing at infinity, then the following concentration inequality holds
	\begin{equation}\label{eq:rearrineq}
	\int_{\R^n}f(x)g(x)\, dx\le 	\int_{\R^n}f^*(x)g^*(x)\,dx.
	\end{equation}
	Moreover, if $f$ is a radially decreasing symmetric function (i.e. $f(x)=\widetilde{f}(|x|)$ for some decreasing $\widetilde{f}:\R\to[0,+\infty)$), then equality in \eqref{eq:rearrineq} holds if and only if  $g=g^*$, that is $g$ is radially decreasing symmetric as well.
	
		 For any $y\in\R^n$, by applying \eqref{eq:rearrineq} with $f(x)=1/|x|^s$ and $g=\chi_{E-y}$ for a measurable set $E\subset\R^n$,  we get that 
        \begin{equation}\label{rear-1} \int_E \frac{dx}{|x-y|^s}=\int_{\R^n}\frac{1}{|x|^s}\,\chi_{E-y}(x)\,dx\leq \int_{\R^n}\frac{1}{|x|^s}\,\chi_{E^*}(x)\,dx=\int_{E^*}\frac{dx}{|x|^s}
        \end{equation}
         which implies
		\begin{equation}\label{rear-ineq}
			V_s(E) \leq V_s(E^*),
		\end{equation}
	where equality holds only if $E$ is a ball. Notice also that a similar argument proves that any ball has as a unique $V_s-$center its center. In particular, we have
    \begin{equation}\label{Vs-balls}
        V_s(B_r)=\int_{B_r}\frac{dx}{|x|^s}=\frac{n\omega_n}{n-s}r^{n-s}.
\end{equation}}
	
	We state some useful properties of $V_s$.
	\begin{Proposition}[Properties of $V_s$]\label{elementary_properties_vs}
		Let $E\subset\R^n$ be a measurable set.
		\begin{itemize}
			\item[(\emph{i})] (Subadditivity)$\ $ For any $F\subset\R^n$ such that $|E\cap F|=0$, we have
			\begin{equation*}
				V_s(E\cup F)\leq V_s(E)+V_s(F);
			\end{equation*}
			
			\item[(\emph{ii})] (Translation invariance)$\ $ Let $x\in\R^n$. Then 
			\begin{equation*}
				V_s(E+x)=V_s(E).
			\end{equation*}
			
			\item[(\emph{iii})] (Scaling)$\ $ Let $\lambda>0$. Then 
			\begin{equation*}
				V_s(\lambda E)=\lambda^{n-s}V_s(E).
			\end{equation*}
			
			\item[(\emph{iv})](H\"older continuity with respect to $L^1$ convergence of sets). Given two measurable sets $E,F\subset\R^n$, we have
			\begin{equation*}
				|V_s(E)-V_s(F)|\leq \frac{n\omega_n^{s/n}}{n-s}|E\Delta F|^{\frac{n-s}{n}}.
			\end{equation*}
		\end{itemize}
	\end{Proposition}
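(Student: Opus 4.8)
The plan is to handle the four items in order, each being a short consequence of the definition of $V_s$ in \eqref{defV}, together with the rearrangement inequality \eqref{eq:rearrineq} and the explicit value \eqref{Vs-balls}.

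For (i), I would fix a $V_s$-center $y$ of $E\cup F$; since $|E\cap F|=0$ the defining integral splits as
\[
V_s(E\cup F)=\int_{E}\frac{dx}{|x-y|^s}+\int_{F}\frac{dx}{|x-y|^s},
\]
and the two summands are bounded, respectively, by $V_s(E)$ and $V_s(F)$, because $V_s$ is the supremum over all centers. For (ii) and (iii) the argument is a change of variables in $\int_E |x-y|^{-s}\,dx$: the translation $z\mapsto z+x$ turns the supremum over centers for $E+x$ into the supremum over centers for $E$, while the dilation $z\mapsto \lambda z$ contributes $\lambda^n$ from the Jacobian and $\lambda^{-s}$ from the kernel, so the supremum over centers for $\lambda E$ equals $\lambda^{n-s}$ times the one for $E$.

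The only part requiring a genuine idea is (iv). Here I would let $y_E$ be a $V_s$-center of $E$ and estimate
\[
V_s(E)-V_s(F)\le \int_E\frac{dx}{|x-y_E|^s}-\int_F\frac{dx}{|x-y_E|^s}\le \int_{E\setminus F}\frac{dx}{|x-y_E|^s}.
\]
Since $x\mapsto |x-y_E|^{-s}$ is radially decreasing about $y_E$ and locally integrable (as $s<n$), the integral on the right over a set of prescribed measure is maximised by the ball centred at $y_E$ with the same measure; this is exactly \eqref{eq:rearrineq} applied (after a translation) with $f(x)=|x|^{-s}$ and $g=\chi_{(E\setminus F)-y_E}$, whose symmetric rearrangement is the characteristic function of $B_\rho$ with $\rho:=(|E\setminus F|/\omega_n)^{1/n}$. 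Combining this with \eqref{Vs-balls} gives
\[
\int_{E\setminus F}\frac{dx}{|x-y_E|^s}\le \int_{B_\rho}\frac{dx}{|x|^s}=\frac{n\omega_n}{n-s}\rho^{n-s}=\frac{n\,\omega_n^{s/n}}{n-s}\,|E\setminus F|^{\frac{n-s}{n}}\le \frac{n\,\omega_n^{s/n}}{n-s}\,|E\Delta F|^{\frac{n-s}{n}},
\]
and exchanging the roles of $E$ and $F$ produces the same bound for $V_s(F)-V_s(E)$, which proves (iv).

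I do not expect any serious obstacle: the only point requiring mild care is in (iv), where the $V_s$-center $y_E$ of $E$ is generally not a center of $F$, so the estimate is genuinely asymmetric and first yields $|E\setminus F|$, after which one uses $|E\setminus F|\le |E\Delta F|$ and symmetrises. I would also observe that the constant $\tfrac{n\,\omega_n^{s/n}}{n-s}$ stays bounded as $s\to 1^-$, consistently with the uniformity statements elsewhere in the paper.
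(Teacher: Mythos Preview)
Your proposal is correct and follows essentially the same approach as the paper: (i)--(iii) are handled by change of variables exactly as you describe, and for (iv) the paper also picks a $V_s$-center $y_E$ of $E$, bounds the difference by the integral of $|x-y_E|^{-s}$ over $E\Delta F$ (you pass through $E\setminus F$ first, which is an equally valid intermediate step), and then applies the rearrangement inequality \eqref{eq:rearrineq} together with \eqref{Vs-balls}.
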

	\begin{proof}
		The subadditivity follows straightforwardly from the definition of $V_s$. Let us prove (\emph{ii}). Fixed $x\in\R^n$, then 
		$$ V_s(E+x)=\max_{y\in\R^n}\int_{E+x}\frac{d\xi}{|\xi-y|^s}=\max_{y\in\R^n}\int_{E}\frac{d\eta}{|\eta-(y-x)|^s}=\max_{y\in\R^n}\int_{E}\frac{d\eta}{|\eta-y|^s}=V_s(E).$$\\
		Similarly, regarding (\emph{iii}), fixed $\lambda>0$, we have
		\begin{equation*}
			V_s(\lambda\,E) = \max_{y\in\mathbb{R}^n}\int_{\lambda E}\frac{dx}{|x-y|^s}= \max_{y\in\mathbb{R}^n}\int_{E}\frac{\lambda^{n-s}}{|x-\frac{y}{\lambda}|^s}\,dx= \lambda^{n-s}\max_{y\in\mathbb{R}^n}\int_{E}\frac{dx}{|x-y|^s}=\lambda^{n-s}V_s(E).
		\end{equation*}
		Finally, we prove (\emph{iv}). Let $y_E$ be a $V_s$-center of $E$, i.e $V_s(E)=\int_E\frac{dx}{|x-y_E|^s}$, then 
		\begin{equation}\label{vE-vF}
			V_s(E)-V_s(F)\leq \int_{E}\frac{dx}{|x-y_E|^s}-\int_{F}\frac{dx}{|x-y_E|^s}\leq \int_{E\Delta F}\frac{dx}{|x-y_F|^s}.
		\end{equation}
		Now, let $r>0$ be such that $(E\Delta F)^*=(E\Delta F+y_E)^*=B_r$. Then by \eqref{rear-1}, we get that { 
		\begin{equation} \label{stima-diffsimm}
			\int_{E\Delta F} \frac{dx }{|x-y_E|^s}  \le \int_{B_r} \frac{dx}{|x|^s} =\frac{n\omega_n}{n-s}r^{n-s}=\frac{n\omega_n^{ s/n}}{n-s}|E\Delta F|^{\frac{n-s}{n}}.       
		\end{equation}
		which, together with \eqref{vE-vF}, entails
		$$ V_s(E)-V_s(F)\leq
        \frac{n\omega_n^{s/n}}{n-s}|E\Delta F|^{\frac{n-s}{n}}.$$}
		By interchanging $E$ with $F$, we prove the same estimate for $V_s(F)-V_s(E)$, implying the claim.
	\end{proof}

{The following lemma ensures that if a set is close to the unit ball $B_1$ in the $L^1$-topology, then its $V_s$-center is close to the origin. This represents the analogous result of \cite[Lemma 3.1]{FJ} (see also \cite[Lemma 2.3]{N}) in our fractional setting.  
\begin{Lemma}\label{lemma-center}
For every $\eee>0$ there exists $\eta>0$ such that if $|F \Delta B_1| < \eta$, then $|y_F| < \eee$ for every $V_s$-center $y_F$ of $F$.
\end{Lemma}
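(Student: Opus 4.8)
The plan is to argue by contradiction, reducing the statement to a compactness argument once we isolate one estimate that is uniform in the base point. The key observation I would use is that, for \emph{every} $y\in\R^n$,
\begin{equation*}
\Bigl|\int_F\frac{dx}{|x-y|^s}-\int_{B_1}\frac{dx}{|x-y|^s}\Bigr|\le\int_{F\Delta B_1}\frac{dx}{|x-y|^s}\le\frac{n\omega_n^{s/n}}{n-s}\,|F\Delta B_1|^{\frac{n-s}{n}},
\end{equation*}
where the last inequality is exactly the symmetrization bound \eqref{stima-diffsimm}, whose proof is insensitive to the choice of the center. Setting $g(y):=\int_{B_1}|x-y|^{-s}\,dx$ and $C_{n,s}:=\frac{n\omega_n^{s/n}}{n-s}$, this reads $\sup_{y\in\R^n}\bigl|\int_F|x-y|^{-s}\,dx-g(y)\bigr|\le C_{n,s}\,|F\Delta B_1|^{(n-s)/n}$; that is, $F\mapsto\int_F|x-y|^{-s}\,dx$ is $L^1$-continuous in $F$, uniformly with respect to $y$.

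Next I would record the properties of $g$. It equals $\int_{B_1(y)}|z|^{-s}\,dz$, hence is continuous (since $|\cdot|^{-s}\in L^1_{\rm loc}(\R^n)$ as $s<n$) and radially symmetric; moreover $g(y)\to0$ as $|y|\to\infty$ (for instance $g(y)\le\omega_n(|y|/2)^{-s}$ when $|y|\ge2$); and $g$ attains its maximum \emph{only} at $y=0$, which is precisely the fact — recorded before \eqref{Vs-balls} — that the center is the unique $V_s$-center of a ball, since $g(y)$ is exactly the quantity maximized there. Now suppose the lemma fails: there are $\eee_0>0$ and sets $F_k$ with $|F_k\Delta B_1|\to0$ admitting $V_s$-centers $y_k:=y_{F_k}$ with $|y_k|\ge\eee_0$. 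As $y_k$ is a $V_s$-center, $\int_{F_k}|x-y_k|^{-s}\,dx\ge\int_{F_k}|x|^{-s}\,dx$; using the uniform estimate at $y=y_k$ and at $y=0$,
\begin{equation*}
g(y_k)+C_{n,s}|F_k\Delta B_1|^{\frac{n-s}{n}}\ \ge\ \int_{F_k}\frac{dx}{|x-y_k|^s}\ \ge\ \int_{F_k}\frac{dx}{|x|^s}\ \ge\ g(0)-C_{n,s}|F_k\Delta B_1|^{\frac{n-s}{n}},
\end{equation*}
hence $g(y_k)\ge g(0)-o(1)$. Since $g(0)>0$ and $g$ vanishes at infinity, the sequence $(y_k)$ is bounded, so along a subsequence $y_k\to y_\infty$ with $|y_\infty|\ge\eee_0>0$; by continuity $g(y_\infty)\ge g(0)$, contradicting that $y=0$ is the unique maximizer of $g$.

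The only genuinely delicate point is the uniformity in $y$ of the first estimate: the naive pointwise bound on $\int_{F\Delta B_1}|x-y|^{-s}\,dx$ blows up as $y$ approaches a point of $F\Delta B_1$, and it is precisely the radial symmetrization of $F\Delta B_1$ about $y$ (as in \eqref{stima-diffsimm}) that makes the bound independent of $y$. Once this is in hand, everything else is a routine compactness/contradiction scheme, using only the continuity, the decay at infinity, and the uniqueness of the maximizer of $g$.
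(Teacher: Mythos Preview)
Your proof is correct and follows essentially the same contradiction scheme as the paper's, using the symmetrization bound \eqref{stima-diffsimm} uniformly in the center and the fact that the origin is the unique $V_s$-center of $B_1$. Your version is in fact slightly more careful: you explicitly establish the boundedness of $(y_k)$ via the decay of $g$ at infinity before extracting a convergent subsequence, whereas the paper simply assumes $y_{F_k}\to y_0$ from the outset.
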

\begin{proof}
Let us argue by contradiction, assuming that there exits a sequence of sets $\gen{F}$ such that $|F_k\Delta B_1|\rightarrow 0$ and $y_{F_k} \to y_0$ with $|y_0| \geq \eee$, for some $\eee >0$, as $k\to \infty$. By the triangular inequality, we have
\begin{equation}\label{eq:001}
\begin{split}
\int_{B_1} \frac{dx}{|x|^s}\leq  \bigg|\int_{B_1} \frac{dx}{|x|^s} - \int_{F_k} \frac{dx}{|x-y_{F_k}|^s} \bigg|+\bigg| \int_{F_k} \frac{dx}{|x -y_{F_k}|^s} &- \int_{B_1} \frac{dx}{|x -y_{F_k}|^s} \bigg|
+\int_{B_1} \frac{dx}{|x -y_{F_k}|^s}.
\end{split}  
\end{equation}
Letting $k \to \infty$, since $F_k \to B_1$ in $L^1$, by point (\emph{iv}) of Proposition \ref{elementary_properties_vs}, we get 
\begin{equation}\label{eq:002}
\bigg|\int_{B_1} \frac{dx}{|x|^s} - \int_{F_k} \frac{dx}{|x-y_{F_k}|^s} \bigg|=|V_s(B_1)-V_s(F_k)|\leq \frac{n\omega_n^{s/n}}{n-s}|B_1\Delta F_k|^{\frac{n-s}{n}}\to 0.
\end{equation}
Moreover, taking into account \eqref{stima-diffsimm}, we obtain 
\begin{equation} \label{eq:003}
\begin{split}
\bigg| \int_{F_k} \frac{dx}{|x -y_{F_k}|^s} - \int_{B_1} \frac{dx}{|x -y_{F_k}|^s} \bigg|&=\bigg| \int_{F_k\setminus B_1} \frac{dx}{|x -y_{F_k}|^s} - \int_{B_1\setminus F_k} \frac{dx}{|x -y_{F_k}|^s} \bigg|\\
& \leq \int_{F_k\setminus B_1} \frac{dx}{|x -y_{F_k}|^s} + \int_{B_1\setminus F_k} \frac{dx}{|x -y_{F_k}|^s}\\
& =\int_{F_k\Delta B_1} \frac{dx}{|x -y_{F_k}|^s}\leq \frac{n\omega_n^{ s/n}}{n-s}|F_k\Delta B_1|^{\frac{n-s}{n}}\to 0,
\end{split}
\end{equation}
as $k\to \infty$. On the other hand, since $B_1$ has as unique $V_s$-center the origin, for all $k\in\N$, we have
\begin{equation}\label{eq:004}
    \int_{B_1}\frac{dx}{|x-y_{F_k}|^s}\leq \int_{B_1} \frac{dx}{|x|^s}.
\end{equation}
Consequently, combining \eqref{eq:001},\eqref{eq:002}, \eqref{eq:003} and \eqref{eq:004}, we conclude that
\begin{equation*}
    \lim_{k\to \infty}\int_{B_1}\frac{dx}{|x -y_{F_k}|^s}=\int_{B_1} \frac{dx}{|x|^s},\end{equation*}
which is true only if $|y_{F_k}|\to 0$, contradicting the fact that $|y_0|\geq \eee>0$.
\end{proof}}    

{Finally, we conclude this section by proving a Poincaré-type inequality which ensures that the oscillation index $\beta_s$, defined in \eqref{defbetas}, controls the entire asymmetry $A_s$. This is the fractional analogue of \cite[Proposition 1.2]{FJ}.

	\begin{Proposition}\label{prop:s-poincare}
		Let $E\subset\R^n$ a measurable set such that $0<|E|<\infty$, then there exists a constant $C=C(n,s)$ such that 
{ 
\begin{equation}\label{eq:s-poincare}
    A_s(E)+\delta_s(E)^{1/2}\leq C \beta_s(E).	
		\end{equation}}
	\end{Proposition}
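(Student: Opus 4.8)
I would deduce everything from the identity \eqref{use-def-beta}, namely $\beta_s^2(E)=\delta_s(E)+\zeta_s(E)$, together with the two sign facts $\delta_s(E)\ge 0$ (the fractional isoperimetric inequality \eqref{non-isop-in}) and $\zeta_s(E)\ge 0$ (the rearrangement inequality \eqref{rear-ineq}); these give at once $\delta_s(E)^{1/2}\le\beta_s(E)$ and $\zeta_s(E)^{1/2}\le\beta_s(E)$. Next, evaluating the minimum in the definition of $A_s(E)$ at a $V_s$-center $y_E$ of $E$ turns the second summand there into exactly $\beta_s(E)$, by the very definitions of $V_s$-center and of $\beta_s$, so that, with $|B_r|=|E|$,
\[
A_s(E)\ \le\ \frac{|E\Delta B_r(y_E)|}{|B_r|}+\beta_s(E).
\]
It therefore suffices to bound the first term by $C(n,s)\,\beta_s(E)$. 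Since $A_s,\delta_s,\beta_s,\zeta_s$ are scale invariant, I normalize $|E|=\omega_n$, so $B_r=B_1$, and by translation invariance I may assume that the origin is a $V_s$-center of $E$, i.e. $V_s(E)=\int_E|x|^{-s}\,dx$. Writing $D:=|B_1\setminus E|=|E\setminus B_1|$, so that $|E\Delta B_1(y_E)|=2D$, and recalling $\zeta_s(E)\le\beta_s(E)^2$ and $V_s(B_1)=\tfrac{n\omega_n}{n-s}$, the whole statement reduces to proving
\[
D^2\ \le\ C(n,s)\,\bigl(V_s(B_1)-V_s(E)\bigr).
\]

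The core is a quantitative, and crucially \emph{quadratic}, form of the elementary fact that the centered unit ball maximizes $\int_E|x|^{-s}\,dx$ under $|E|=\omega_n$. I would write
\[
V_s(B_1)-V_s(E)=\int_{B_1\setminus E}\frac{dx}{|x|^s}-\int_{E\setminus B_1}\frac{dx}{|x|^s}
\]
and estimate the two pieces by a bathtub-type comparison: since $|x|^{-s}$ is radially strictly decreasing, among subsets of $B_1$ of measure $D$ the integral $\int_A|x|^{-s}\,dx$ is smallest for the outer shell $B_1\setminus B_{r_1}$ with $\omega_n r_1^n=\omega_n-D$, and among subsets of $\mathbb{R}^n\setminus B_1$ of measure $D$ it is largest for the inner shell $B_{r_2}\setminus B_1$ with $\omega_n r_2^n=\omega_n+D$; both follow from the standard swap inequality, the mass to be moved sitting where $|x|^{-s}$ is at least, respectively at most, its value on the shell. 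Computing the shell integrals explicitly yields
\[
V_s(B_1)-V_s(E)\ \ge\ \frac{n\omega_n}{n-s}\,\phi(a),\qquad \phi(a):=2-(1-a)^{p}-(1+a)^{p},\quad a:=D/\omega_n\in[0,1],\ p:=\tfrac{n-s}{n}\in(0,1).
\]
Finally $\phi(0)=\phi'(0)=0$ and $\phi''(u)=p(1-p)\bigl((1-u)^{p-2}+(1+u)^{p-2}\bigr)\ge p(1-p)$ on $[0,1)$, so a double integration gives $\phi(a)\ge\tfrac12 p(1-p)\,a^2$; since $p(1-p)=\tfrac{s(n-s)}{n^2}$, this produces $V_s(B_1)-V_s(E)\ge \tfrac{s}{2n\omega_n}D^2$, which is the desired inequality.

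Assembling the pieces,
\[
\frac{|E\Delta B_1(y_E)|}{\omega_n}=\frac{2D}{\omega_n}\ \le\ \frac{2}{\omega_n}\Bigl(\tfrac{2n\omega_n}{s}\bigl(V_s(B_1)-V_s(E)\bigr)\Bigr)^{1/2}=\frac{2}{\omega_n}\Bigl(\tfrac{2n\omega_n}{s}V_s(B_1)\Bigr)^{1/2}\zeta_s(E)^{1/2}\ \le\ C_1(n,s)\,\beta_s(E),
\]
whence $A_s(E)\le(C_1(n,s)+1)\beta_s(E)$, and adding $\delta_s(E)^{1/2}\le\beta_s(E)$ yields \eqref{eq:s-poincare}. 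The step I expect to demand the most care is not a single computation but the conceptual one in the first paragraph: at first sight the Fraenkel-type term in $A_s$ looks unrelated to the purely energetic quantity $\beta_s$, and in the local case \cite{FJ} a compactness/porosity argument is invoked to control it. The point here is that $\beta_s$ already dominates $\zeta_s=1-V_s(E)/V_s(B_1)$, and that the $V_s$-deficit $V_s(B_1)-V_s(E)$ is genuinely of order $|E\Delta B_1(y_E)|^2$; the degeneracy $\phi'(0)=0$ of the shell comparison is precisely what forces the correct square-root exponent, so no regularity of $E$ is needed. Any weaker exponent there would break the sharp form of the Poincaré inequality.
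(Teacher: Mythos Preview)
Your proof is correct and follows essentially the same route as the paper: normalize, center at a $V_s$-center, use the bathtub/shell comparison to reduce to the function $\phi(a)=2-(1-a)^{p}-(1+a)^{p}$, and extract the quadratic lower bound from its second derivative (the paper phrases this last step as uniform concavity of $t\mapsto(1+t)^{(n-s)/n}$, arriving at the same conclusion with a slightly different constant). Your closing remark that \cite{FJ} invokes a compactness/porosity argument for the Poincar\'e inequality is inaccurate---their Proposition~1.2 is proved by exactly the same shell comparison---but this does not affect your argument.
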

	\begin{proof} Since the quantities $A_s(E)$, $\beta_s(E)$, $\zeta_s(E)$ and $\delta_s(E)$ are scale invariant, we shall assume without loss of generality that $|E|=\omega_n$. Moreover, up to translations, we can also assume that $E$ is $V_s$-centered at the origin, i.e.
		\begin{equation*}
			V_s(E)=\int_{E}\frac{1}{|x|^s}\,dx.
		\end{equation*}
		Then, we have
		\[
		\begin{split}
			V_s(B_1)-V_s(E)&=  \int_{B_1} \frac{1}{|x|^s} \, dx -  \int_{E} \frac{1}{|x|^s} \, dx\\
			&= \int_{ B_1 \setminus E} \frac{1}{|x|^s} \, dx - \int_{E\setminus B_1} \frac{1}{|x|^s} \, dx,
		\end{split}
		\]
        which, recalling \eqref{use-def-beta}, implies
        \begin{equation}\label{new-betas}
        \beta_s^2(E)=\delta_s(E)+\frac{c_{n,s}}{P_s(B_1)}\left( \int_{ B_1 \setminus E} \frac{1}{|x|^s} \, dx - \int_{E\setminus B_1} \frac{1}{|x|^s} \, dx \right).        \end{equation}
		Since $|E| = |B_1|$ we have
		\begin{equation}
			\label{definition.a}
			|E \setminus B_1| =  |B_1 \setminus E| =:a<\omega_n.
		\end{equation}
        Let us denote by $A(R,1) = B_R \setminus B_1$ and $A(1,r) = B_1 \setminus B_r$  the two annuli such that $|A(R,1)| = |A(1, r)| =a$, where $a$ is defined in \eqref{definition.a}. In particular,
		\[
		R = \left( 1 + \frac{a}{\omega_n}\right)^{1/n} \qquad \text{and} \qquad r = \left( 1 - \frac{a}{\omega_n}\right)^{1/n}.
		\]   
        {We have that 
\begin{equation}\label{stima-anello}
 \int_{ A(R,1)} \frac{1}{|x|^s} \, dx- 		\int_{ E \setminus B_1} \frac{1}{|x|^s} \, dx = \int_{ A(R,1)\setminus(E\setminus B_1)} \frac{1}{|x|^s} \, dx- 		\int_{ (E \setminus B_1)\setminus A(R,1)} \frac{1}{|x|^s} \, dx.
 \end{equation}
		Let us denote by $U:=A(R,1)\setminus(E\setminus B_1)$ and $V:=(E \setminus B_1)\setminus A(R,1)$. Note that $|U|=|V|$ and that, if $f(x):=1/|x|^s$, then $f(x)\ge f(y)$ for all $(x,y)\in U\times V$. Consequently, by \eqref{stima-anello}, we get 
		\[
        \int_{ E \setminus B_1} \frac{1}{|x|^s} \, dx \leq \int_{ A(R,1)} \frac{1}{|x|^s} \, dx\,.		\]
 Arguing similarly, we also obtain 
		\[
		\int_{ B_1 \setminus E} \frac{1}{|x|^s} \, dx \geq  \int_{A(1, r)} \frac{1}{|x|^s} \, dx.
		\]}Thus, by \eqref{new-betas} and recalling that $c_{n,s}=\frac{P_s(B_1)(n-s)}{n\omega_n}$, we have
		\begin{equation}
			\label{long.calculation}
			\begin{split}
                \beta_s(E)^2&\geq\,\delta_s(E)+\frac{c_{n,s}}{P_s(B_1)}\left(\int_{ A(1,r)} \frac{1}{|x|^s} \, dx - \int_{ A(R,1)} \frac{1}{|x|^s} \, dx\right) \\
				&=\, \delta_s(E)+
                \bigl((1 - r^{n-s})-(R^{n-s} -1)\bigr) \\
				&=\, \delta_s(E)+ 
            \left(2 -   \left( 1 + \frac{a}{\omega_n}\right)^{\frac{n-s}{n}} -   \left( 1 - \frac{a}{\omega_n}\right)^{\frac{n-s}{n}}  \right).
			\end{split}
		\end{equation}
		Now we observe that the function $f(t) = (1 + t)^{\frac{n-s}{n}}$ is uniformly concave in $[-1,1]$, i.e., 
		\begin{equation}\label{uniformcon}
			\frac{1}{2}\left(f(t) + f(\tau) \right) \leq f \left(\frac{t}{2} + \frac{\tau}{2} \right) - \alpha_{n,s} |t-\tau|^2,
		\end{equation}
		for any $t,\tau \in [-1,1]$ and $\alpha_{n,s} := - \frac{1}{4} \left( \sup_{t \in (-1,1)} f''(t)\right)  =  \frac{s(n-s)}{4n^2}2^{\frac{-n-s}{n}} >0$. Therefore, recalling \eqref{definition.a} and taking $t=\frac{a}{\omega_n}$ and $\tau=-t$ in \eqref{uniformcon}, by \eqref{long.calculation}  we get 
		\[
        \beta^2_s(E)\geq \delta_s(E)+\frac{8\,\alpha_{n,s}}{\omega_n^2} \, a^2 = \delta_s(E)+ {\tilde \alpha}_{n,s} \, (|E \setminus B_1| +  |B_1 \setminus E|)^2,
		\]
        for some constant $\tilde \alpha_{n,s}>0$.
	Finally, since $|E \setminus B_1| +  |B_1 \setminus E| = |E \Delta B_1| $, we obtain 
		\[ 
        (1+\tilde \alpha_{n,s})\beta^2_s(E)\geq \delta_s(E)+{\tilde \alpha}_{n,s} (\beta_s(E)^2+  |E \Delta B_1| ^2) \geq \delta_s(E)+CA_s(E)^2,
		\]
	which implies the claim for a suitable constant $C$ depending only on $n$ and $s$.
    \end{proof}

{\begin{Remark}
    As a direct consequence of the proof of Proposition \ref{prop:s-poincare}, we obtain that for any measurable set $E \subset \mathbb{R}^n$ with $0 < |E| < \infty$, the following estimate holds \begin{equation}\label{controlloRieFra}
        \zeta_s(E)\geq C_0 \, \alpha^2(E),
    \end{equation}
   for some constant $C_0 > 0$ depending only on $n$ and $s$. 
{Observe moreover that, by \eqref{use-def-beta}, one trivially has that} \begin{equation*}
        \beta^2_s(E)\geq \zeta_s(E) \quad \text{ and }\quad \beta^2_s(E)\geq \delta_s(E).
    \end{equation*}
\end{Remark}}

	\section{Existence of generalized minimizers}\label{sec:exgen}
	
	As explained in the Introduction, in order to get our stability inequality, we find a constant $\varepsilon_0>0$, sufficiently small, so that problem \eqref{infclass} has the ball as the only solution. {Due to the competing nature of the terms $P_s$ and $V_s$, a direct proof of the existence of minimizers for \eqref{infclass} is not straightforward. For this reason, we consider a relaxed version of the functional, namely a generalized-type energy. In this section, we establish the existence of minimizers for this generalized functional. In the following section, we will then show that these minimizers are indeed minimizers for $\E_s$ in the classical sense.}
	
	\begin{Definition}
	We define the \textit{generalized functional} of $\E_s$ as the  functional
	\[
	\widetilde{\E_s} : (\R^n)^\mathbb N\to\R
	\]
	such that if $\widetilde E=\{E^k\}_{k\in\N}\in (\R^n)^\mathbb N$, then
\begin{equation}\label{defiGeneralziedFunctional}
	\widetilde{\E}_{s}\left(\{E^k\}_{k\in\N}\right) \coloneqq \sum_{k=1}^{\infty}\E_{s}(E^k).
\end{equation}
\end{Definition}
	The minimization problem we are interested in is the following:
	\begin{equation}\label{infgen}
		\min\left\{ \widetilde{\E}_{s}\left(\{E^k\}_{k\in\N}\right)\, :\, \sum_{k=1}^{\infty}|E^k| =\omega_n \right\}.
	\end{equation}
Let us begin by linking problem \eqref{infclass} and \eqref{infgen}. {The following result is closely related to \cite[Proposition 1.1]{CT}. Nevertheless, due to the presence of the nonlocal term $V_s$, the so-called \emph{vanishing range action} property (corresponding to assumption (H3) in \cite{CT}) is not satisfied in our setting. Despite this, the proof remains essentially the same, requiring only minor adjustments. Since the evidence of such adjustments is not immediate,  we include a detailed proof.}

	\begin{Proposition}\label{infclassinfgen}
		For any $\eee>0$, the equality $$\inf\left\{\mathcal{E}_s(E): |E|=\omega_n\right\}=\inf\left\{\widetilde{\E}_{s}\left(\{E^k\}_{k\in \N}\right): \sum_{k=1}^{\infty}|E^k|=\omega_n \right\}$$ holds.
	\end{Proposition}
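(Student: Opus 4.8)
The plan is to prove the two inequalities separately. The direction $\inf\widetilde{\E}_s \le \inf\E_s$ is immediate: given any competitor $E$ with $|E|=\omega_n$ for the classical problem, the sequence $\widetilde E = (E,\emptyset,\emptyset,\dots)$ is admissible for \eqref{infgen} and satisfies $\widetilde{\E}_s(\widetilde E) = \E_s(E)$, since $P_s(\emptyset)=V_s(\emptyset)=0$. Taking the infimum over $E$ gives the bound.

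For the reverse inequality $\inf\E_s \le \inf\widetilde{\E}_s$, the strategy is the standard one used to compare a functional with its generalized (concentration-compactness) relaxation, adapted from \cite[Proposition 1.1]{CT}: first I would reduce to a finite sum, then translate the pieces far apart and glue them into a single set, controlling the error using subadditivity together with the decay of the interaction terms at large distances. Concretely, fix $\widetilde E = \{E^k\}$ admissible for \eqref{infgen} with $\widetilde{\E}_s(\widetilde E) < \inf\widetilde{\E}_s + \delta$. Since $\sum_k \E_s(E^k)$ converges and $\sum_k|E^k|=\omega_n$, for $N$ large the tail satisfies $\sum_{k>N}\E_s(E^k)<\delta$ and $\sum_{k>N}|E^k|=:\mu_N\to 0$. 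I would first approximate each $E^k$, $k\le N$, by a bounded set (intersecting with a large ball, using Proposition \ref{elementary_properties_ps}(iv) and Proposition \ref{elementary_properties_vs}(iv) to control the change in $P_s$ and $V_s$), so that $E^1,\dots,E^N$ are all contained in a fixed ball $B_\rho$. Then choose translation vectors $\tau^1,\dots,\tau^N$ with $|\tau^i-\tau^j|$ enormous, set $F_R := \bigcup_{k\le N}(E^k+R\tau^k)$, and estimate
\[
P_s(F_R) \le \sum_{k\le N} P_s(E^k) + C_{N,\rho}\, R^{-s}, \qquad V_s(F_R) \le \sum_{k\le N} V_s(E^k),
\]
where the first bound comes from the exact subadditivity formula in Proposition \ref{elementary_properties_ps}(i) — the cross terms $\int\int |x-y|^{-n-s}$ over well-separated bounded pieces decay like $R^{-s}$ — and the second from Proposition \ref{elementary_properties_vs}(i). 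Finally I would restore the volume: $F_R$ has measure $\omega_n-\mu_N$ (after the truncation, slightly less), so I dilate by $\lambda_R := (\omega_n/|F_R|)^{1/n} \to 1$ as $N\to\infty$ and $R\to\infty$, and use the scaling $P_s(\lambda F)=\lambda^{n-s}P_s(F)$, $V_s(\lambda F)=\lambda^{n-s}V_s(F)$ from Propositions \ref{elementary_properties_ps}(iii) and \ref{elementary_properties_vs}(iii). Altogether $\E_s(\lambda_R F_R) \le \lambda_R^{n-s}\bigl(\sum_{k\le N}\E_s(E^k) + C_{N,\rho}R^{-s}\bigr) \le \widetilde{\E}_s(\widetilde E) + o(1)$, and letting first $R\to\infty$, then $N\to\infty$, then $\delta\to 0$ yields $\inf\E_s \le \inf\widetilde{\E}_s$.

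The main obstacle, and the reason the authors flag that the adjustments are "not immediate," is precisely the handling of the nonlocal volume term $V_s$ under the truncation and the translation–gluing operation. Unlike the situation with assumption (H3) of \cite{CT} (the vanishing range action), $V_s$ does not decouple across far-apart components — the functional $\int 1/|x-y_E|^s\,dx$ sees the whole set, and its maximizing center $y_E$ may sit near one component while ignoring the others, so there is no hope of an identity $V_s(F_R)=\sum V_s(E^k)$. The key point to get right is that we only need the \emph{one-sided} bound $V_s(\bigcup_k(E^k+R\tau^k)) \le \sum_k V_s(E^k)$, which is exactly subadditivity (Proposition \ref{elementary_properties_vs}(i)) and holds regardless of separation; combined with the fact that subadditivity goes in the favorable direction for the infimum, this is enough. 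One must also check that truncating $E^k$ changes $V_s$ by a controlled amount — here Hölder continuity (Proposition \ref{elementary_properties_vs}(iv)) gives $|V_s(E^k) - V_s(E^k\cap B_\rho)| \le C|E^k\setminus B_\rho|^{(n-s)/n}$, which is small for $\rho$ large since $\sum_k|E^k|<\infty$ — and that the dilation factor $\lambda_R^{n-s}$ stays close to $1$, which follows because the lost volume $\omega_n - |F_R|$ tends to $0$ as $N\to\infty$ and $\rho\to\infty$.
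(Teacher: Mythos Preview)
Your proposal is correct and follows essentially the same route as the paper: reduce to finitely many bounded pieces via truncation (controlling the change in $P_s$ and $V_s$ with Propositions \ref{elementary_properties_ps}(iv) and \ref{elementary_properties_vs}(iv)), translate them far apart, and glue using subadditivity of both $P_s$ and $V_s$. The only noteworthy difference is the volume correction: the paper compensates the lost mass by adjoining a small ball $B_{r_1}$ (translated far away) with $\E_s(B_{r_1})<\varepsilon$, whereas you dilate by $\lambda_R\to 1$ and exploit the common scaling exponent $n-s$; both devices work, and your $R^{-s}$ cross-term bound for $P_s$ is in fact unnecessary since plain subadditivity (Proposition \ref{elementary_properties_ps}(i)) already gives $P_s(F_R)\le \sum_{k\le N}P_s(E^k)$.
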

\begin{proof} Let us set 
		\begin{equation*}
			I:=\inf\left\{\mathcal{E}_s(E): |E|=\omega_n\right\}
		\end{equation*}
		and 
		\begin{equation*}
			I_{gen}:=\inf\left\{\widetilde{\E}_{s}\left(\{E^k\}_{k\in \N}\right): \sum_{k=1}^{\infty}|E^k|=\omega_n \right\}.
		\end{equation*}
		
        Given a set $E$ with $|E|=\omega_n$, considering the sequence of sets $\gen{E}$ such that $E^1=E$ and $E^k=\emptyset$ for  $k\geq2$, we have 
		\[
		\widetilde{\mathcal{E}}_{s}(\gen{E}) = \mathcal{E}_s(E).
		\]
		Consequently, it follows that $I_{gen}\leq I$. 
		
		On the other hand, for any $\varepsilon > 0$ let  $\gen{E}$, with $\sum_{k}|E^k|=\omega_n$ be such that  
		\[
		\widetilde{\E}_s(\gen{E}) \leq I_{gen}+ \varepsilon.
		\]  
		We claim that there exists a set $E$ with $|E|=\omega_n$ such that 
		\[
		\mathcal{E}_s(E) \leq I_{gen}+ 3\varepsilon.
		\]  
	By (\emph{iii}) of Proposition \ref{elementary_properties_ps} and (\emph{iii}) of Proposition \ref{elementary_properties_vs}, for any ball $B_r$ centered at the origin of radius $r>0$, we have
		$$ \E_s(B_r)=r^{n-s}\E_s(B_1)\to 0\quad \text{as }\,r\to0^+.$$
		Thus, for any $\varepsilon>0$ there exists some $\eta= \eta(\varepsilon)$ such that
        \begin{equation}\label{stima_0}
         \text{if } \,|B_r| \leq 2\eta \,\text{ then }\, \mathcal{E}_s(B_r) \leq \varepsilon. 
        \end{equation} 
		Moreover, it is possible to choose $M = M(\eta)\in\N$ sufficiently large such that
		\begin{equation}\label{stima-1-equiv}
			\sum_{k=1}^M \mathcal{E}_s(E^k) \leq \sum_{k=1}^{\infty} \mathcal{E}_s(E^k)  \leq I_{gen}+ \varepsilon \quad \text{ and } \quad \sum_{k=M+1}^{\infty}|E^k| \leq  \eta.
		\end{equation}
		
		By (\emph{iv}) of Proposition \ref{elementary_properties_ps} and (\emph{iv}) of Proposition \ref{elementary_properties_vs}, since for any $E\subseteq \R^n$, $E\cap B_\rho\to E$ in $L^1$ as $\rho\to+\infty$, we get
		\begin{equation*}
			\E_s(E\cap B_\rho)\to \E_s(E), \quad \text{as }\,\rho\to+\infty.
		\end{equation*}
		Thus, there exists $R = R(\varepsilon, \eta, M)$ sufficiently large {(observe that all quantities $\eta,\;M,\;R$ just depends on $\varepsilon$)} such that  
		\begin{equation}\label{cutoff}  
			\sum_{k=1}^{M} \mathcal{E}_s(E^k \cap B_R) \leq \sum_{k=1}^M \mathcal{E}_s(E^k) + \varepsilon \leq I_{gen}+ 2\varepsilon \quad \text{ and } \quad \sum_{k=1}^{M} |E^k \cap B_{R}^c| \leq \eta.
		\end{equation}  
		Now, let us consider the ball $B_{r_1}$  centered at the origin such that 
		\[
		|B_{r_1}| = \sum_{k=1}^{M} |E^k \cap B_{R}^c| + \sum_{k=M+1}^\infty|E^k| \leq 2 \eta,
		\] 
       and, recalling \eqref{stima_0}, we have that
       \begin{equation}\label{stimaE(B_r1)}
           \E_s(B_{r_1})<\eee.
       \end{equation}
	Moreover, let 
		\[
		E_L := \bigg[\bigcup_{k=1}^{M} \big((E^k \cap B_{R}) + L^k e_1 \big)\bigg] \bigcup \bigg[ B_{r_1} +L^{M+1} e_1 \bigg],
		\]
		with $L > 0$, sufficiently large to ensure $|E_L| = \omega_n$. Then, for $L$ sufficiently large, using (\emph{i}) and (\emph{ii}) in Proposition \ref{elementary_properties_ps} and in Proposition \ref{elementary_properties_vs}, we have
		\begin{equation*}  
			\mathcal{E}_s(E_L) \leq \sum_{k=1}^M\mathcal{E}_s(E^k \cap B_R) + \mathcal{E}_s(B_{r_1}) \leq I_{gen}+ 3\varepsilon,
		\end{equation*}
		where the last inequality follows from \eqref{cutoff} and \eqref{stimaE(B_r1)}. Whence
		\[
		I \leq I_{gen}+ 3\varepsilon.
		\]  
		which concludes the proof by the arbitrariness of $\varepsilon > 0$.
	\end{proof}
	
For later purpose, with the scope of eliminating the volume constraint in Problem \eqref{infclass}, we introduce the following {{\it penalized} functional. Concretely, for any $\Lambda>0$, we define}
\begin{equation}\label{eq:EsL}
\begin{split}
	\EsL(\gen{E}):&=\widetilde{\E_s}(\gen{E})+\Lambda\left| \sum_{k=1}^\infty |E^k|-\omega_n\right|\\
    &=\sum_{k=1}^\infty \E_s(E^k)+\Lambda\left| \sum_{k=1}^\infty|E^k|-\omega_n\right|.
    \end{split}
\end{equation}

We have the following result.

\begin{Proposition}\label{prop:equivcunc}
{For any $\eee_0>0$ there exists a constant
{$\tilde\Lambda$ of the form $\tilde \Lambda=C_{n,s}(1+\eee_0)$ (for a suitable $C_{n,s}>0$)}
	  such that, for all $\eee<\eee_0$ and $\Lambda>\tilde\Lambda$, the following equality holds
	\begin{equation}\label{eq:equivinf}
\inf\left\{ \widetilde\E_{s}(\gen{E})\,:\,\sum_{k=1}^{\infty}|E^k|=\omega_n  \right\}=\inf_{E^k\subset\R^n}\EsL(\gen{E}) .
	\end{equation}
Moreover, any minimizer of the right-hand side of \eqref{eq:equivinf} satisfies the volume constraint $\sum_k |E^k|=\omega_n$, i.e., the constrained and the uncontrained generalized minimization problems for $\EsL$ are equivalent.}
\end{Proposition}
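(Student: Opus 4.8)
The plan is to establish the equivalence \eqref{eq:equivinf} by a standard penalization argument in the spirit of Esposito--Fusco and Cicalese--Leonardi, adapted to the generalized (sequence) setting. We fix $\eee_0>0$, take any $\eee<\eee_0$, and let $I_{gen}=\inf\{\widetilde\E_s(\gen{E}):\sum_k|E^k|=\omega_n\}$.

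\medskip
\textbf{Step 1: the easy inequality and basic a priori bounds.} First I would observe that for any admissible competitor $\gen{E}$ with $\sum_k|E^k|=\omega_n$ one has $\EsL(\gen{E})=\widetilde\E_s(\gen{E})$, so $\inf_{E^k}\EsL\le I_{gen}$, regardless of $\Lambda$. For the reverse inequality, the point is that a minimizing sequence for the right-hand side cannot have total volume too far from $\omega_n$: since the constant sequence $(B_1,\emptyset,\dots)$ has $\EsL=\E_s(B_1)$, any near-minimizer $\gen{F}$ satisfies $\Lambda\big|\sum_k|F^k|-\omega_n\big|\le \widetilde\E_s(\gen{F})+\Lambda\big|\,\cdot\,\big|\le \E_s(B_1)+1$, which already forces $\sum_k|F^k|\le \omega_n + (\E_s(B_1)+1)/\Lambda$, hence a uniform volume bound once $\Lambda\ge 1$, say.

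\medskip
\textbf{Step 2: rescaling to restore the volume constraint.} Given a near-minimizer $\gen{F}$ of $\EsL$ with total volume $m:=\sum_k|F^k|$, I would rescale by $\lambda:=(\omega_n/m)^{1/n}$ so that $\widetilde E=\{\lambda F^k\}_k$ satisfies the constraint. By the scaling properties (\emph{iii}) of Propositions \ref{elementary_properties_ps} and \ref{elementary_properties_vs}, $\E_s(\lambda F^k)=\lambda^{n-s}\E_s(F^k)$, hence $\widetilde\E_s(\widetilde E)=\lambda^{n-s}\widetilde\E_s(\gen{F})$. Comparing $\widetilde\E_s(\widetilde E)$ with $\EsL(\gen{F})$ and using $|\lambda^{n-s}-1|\le C_{n,s}|m-\omega_n|/\omega_n$ (from the Lipschitz estimate on $t\mapsto (t/\omega_n)^{(n-s)/n}$ near $t=\omega_n$, valid on the bounded range of $m$ from Step 1), one gets
\[
I_{gen}\le \widetilde\E_s(\widetilde E)=\EsL(\gen{F})-\Lambda|m-\omega_n|+\big(\lambda^{n-s}-1\big)\widetilde\E_s(\gen{F})\le \EsL(\gen{F})-\big(\Lambda-C_{n,s}(1+\eee_0)\big)|m-\omega_n|,
\]
where $C_{n,s}(1+\eee_0)$ bounds $\frac{C_{n,s}}{\omega_n}\widetilde\E_s(\gen{F})=\frac{C_{n,s}}{\omega_n}(P_s(F)+\eee V_s(F))$ uniformly over near-minimizers, using $\eee<\eee_0$ and the volume bound to control $V_s$ via Proposition \ref{elementary_properties_vs}(iv) and $P_s$ via near-minimality. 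Choosing $\tilde\Lambda:=C_{n,s}(1+\eee_0)$, for $\Lambda>\tilde\Lambda$ the last term is $\le \EsL(\gen{F})$, so passing to the infimum over near-minimizers yields $I_{gen}\le \inf_{E^k}\EsL(\gen{E})$. Combined with Step 1 this gives \eqref{eq:equivinf}.

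\medskip
\textbf{Step 3: minimizers of the penalized problem satisfy the constraint.} If $\gen{F}$ is a minimizer of $\EsL$, then by \eqref{eq:equivinf} it attains the value $I_{gen}$; running the rescaling argument of Step 2 with this exact minimizer shows that if $m\ne\omega_n$ then $I_{gen}\le \EsL(\gen{F})-(\Lambda-\tilde\Lambda)|m-\omega_n|<I_{gen}$, a contradiction, so $m=\omega_n$. Conversely any minimizer of the constrained generalized problem has $\EsL$-value $I_{gen}=\inf\EsL$, hence is a minimizer of the penalized problem; thus the two problems are equivalent. The main obstacle — and the reason the authors flag that the adjustments are ``not immediate'' — is making the estimate on $\widetilde\E_s(\gen{F})$ genuinely uniform over the minimizing sequence: unlike in \cite{CT}, the term $V_s$ does not vanish under splitting/rescaling, so one must carefully combine the a priori volume bound from Step 1 with the Hölder bound on $V_s$ and with near-minimality to keep all constants independent of the particular competitor, and to see that the threshold $\tilde\Lambda$ depends on $\eee_0$ only through the stated affine form $C_{n,s}(1+\eee_0)$.
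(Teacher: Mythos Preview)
Your proposal is correct and follows essentially the same rescaling argument as the paper: both compare $\EsL(\gen{F})$ with $\widetilde\E_s$ of the volume-rescaled sequence $\{\lambda F^k\}_k$, use the scaling $\E_s(\lambda\,\cdot)=\lambda^{n-s}\E_s$, control $|\lambda^{n-s}-1|$ in terms of the volume defect, and bound $\widetilde\E_s(\gen{F})$ uniformly via the competitor $(B_1,\emptyset,\dots)$. One small slip: you write $\widetilde\E_s(\gen{F})=P_s(F)+\eee V_s(F)$ as if for a single set --- it should be $\sum_k\big(P_s(F^k)+\eee V_s(F^k)\big)$, but the bound you actually need, $\widetilde\E_s(\gen{F})\le\EsL(\gen{F})\le \E_s(B_1)+1\le C_{n,s}(1+\eee_0)$, follows directly from near-minimality without invoking Proposition~\ref{elementary_properties_vs}(iv).
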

\begin{proof}
Let $\alpha:=\inf\left\{ \widetilde\E_{s}(\gen{E})\,:\,\sum_k|E^k|=\omega_n  \right\}{>0}$. {Given a sequence $\widetilde F=\gen{F}$ with $\sum_k|F^k|=\omega_n$, then 
	\[
		\widetilde{\mathcal{E}}_{s}(\gen{F}) = \EsL(\gen{F}).
	\]
      thus we have $\alpha\geq\inf_{E^k\subset\R^n}\EsL(\gen{E})$.

On the other hand, let us suppose that there exists $\widetilde E=\gen{E}$ such that
\begin{equation}\label{eq:contrL}
\EsL(\widetilde E)\leq\alpha.
\end{equation}
By testing with the sequence $\widetilde B=(B_1,\emptyset,\emptyset,\ldots)$ one gets 
\begin{equation}\label{stima_ast}
\Lambda\left|\sum_k |E^k|-\omega_n \right|\le \EsL(\gen{E})\leq \alpha\le \widetilde\E_s(\widetilde B)\le C_{n,s}(1+\varepsilon_0),
\end{equation}
where $C_{n,s}:=P_s(B_1)+\frac{n\omega_n}{n-s}$.

Now, let $\delta\in\R$ be such that $(1+\delta)^n\sum_k |E^k|=\omega_n$. Notice that this implies that $\delta\to0$ as $\Lambda\to+\infty$. Precisely, by \eqref{stima_ast}, one has
\begin{equation}\label{stima-ast2}
C\Lambda|\delta|\le\Lambda\omega_n\left|\frac{1}{(1+\delta)^n}-1 \right|\le C_{n,s}(1+\varepsilon_0),
\end{equation}
for some constant $C>0$, depending only on $n$, so that
\[
|\delta|\le \frac{C_{n,s}(1+\varepsilon_0)}{\Lambda},
\]
up to renaming the constant $C_{n,s}>0$.
Let now $\widetilde G=(1+\delta)\widetilde E=\{(1+\delta)E^k\}_{k\in\N}$. Then by \eqref{eq:contrL}, we have $\EsL(\widetilde E )\leq \alpha \leq \widetilde\E_s(\widetilde G)$, which implies, recalling the scaling property of $\widetilde\E_s$,
\[
\widetilde \E_{s}(\widetilde E)+\Lambda \left| \sum_k|E_k|-\omega_n \right|=\EsL(\widetilde E)\leq \widetilde\E_s(\widetilde G)\le  (1+\delta)^{n-s}\widetilde \E_{s}(\widetilde E)\le(1+2(n-s)|\delta|)\widetilde\E_{s}(\widetilde E),
\]
for $|\delta|$ small enough. Thus, by rearranging the terms, and taking into account \eqref{stima_ast} and \eqref{stima-ast2}, one gets
\begin{equation}\
\Lambda|\delta|\le 2(n-s)|\delta|\widetilde\E_{s}(\widetilde E)\le C_{n,s}(1+\eee_0)|\delta|.
\end{equation} 
{This implies that it must be $$\Lambda\le\widetilde\Lambda=C_{n,s}(1+\eee_0),$$ thus concluding
 the proof of the first claim.} By reasoning in the same way, the second follows similarly.}
\end{proof}

By the previous result, to show existence of constrained generalized minimizers for $\widetilde \E_{s}$ it is enough to show existence of unconstrained generalized minimizers for $\EsL$
\begin{equation}\label{eq:uncmin}
	\min_{E^k\subset \R^n} \EsL(\gen{E}).
\end{equation}
 { The proof of existence of minimizers for \eqref{eq:uncmin} follows a nowaday standard strategy, see, for example, \cite{CT, GNR4,  DiCNRV}.}
 \begin{Proposition}
 {Fixed $\eee_0>0$. Then, for any $\eee<\eee_0$ and $\Lambda>\tilde\Lambda$, with $\tilde \Lambda=C_{n,s}(1+\eee_0)$ given by Proposition \ref{prop:equivcunc}},  problem \eqref{eq:uncmin} (and thus problem \eqref{infgen}) admits a solution.
\end{Proposition}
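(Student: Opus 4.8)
The plan is to prove existence of a minimizer for the unconstrained generalized problem \eqref{eq:uncmin} via the direct method, with the generalized (concentration-compactness type) formulation serving precisely to prevent loss of mass to infinity. First I would take a minimizing sequence $\{\widetilde E_j\}_{j\in\N}$, $\widetilde E_j=\{E_j^k\}_{k\in\N}$, for $\EsL$. By Proposition \ref{prop:equivcunc} (more precisely, by \eqref{stima_ast} applied along the sequence) the volumes $\sum_k|E_j^k|$ are uniformly bounded, say by $2\omega_n$ for $j$ large, and the energies $\widetilde\E_s(\widetilde E_j)$ are uniformly bounded; in particular $\sum_k P_s(E_j^k)\le C$ uniformly. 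I would then reorder, for each $j$, the components $E_j^k$ so that their volumes are nonincreasing in $k$, and translate each component independently (using translation invariance of $P_s$ and of $V_s$, points (\emph{ii}) of Propositions \ref{elementary_properties_ps} and \ref{elementary_properties_vs}, together with the additive structure of $\widetilde\E_s$) so that, say, each $E_j^k$ meets a fixed small ball or has barycenter in a fixed compact set.

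The core compactness step is the following: for each fixed $k$, the sets $E_j^k$ have uniformly bounded volume and uniformly bounded $P_s$, hence (by the fractional compact embedding $W^{s,1}_{loc}\hookrightarrow\hookrightarrow L^1_{loc}$, or equivalently compactness of sets of finite $s$-perimeter, together with the decay $P_s(E\setminus B_R)\to0$ as $R\to\infty$ which follows from point (\emph{iv}) of Proposition \ref{elementary_properties_ps} — one needs to rule out mass escaping to infinity within each component, and this is why one normalizes each component to meet a fixed ball) one extracts, by a diagonal argument over $k$, a subsequence (not relabeled) and limit sets $E^k$ with $\chi_{E_j^k}\to\chi_{E^k}$ in $L^1_{loc}$ and a.e. Set $\widetilde E=\{E^k\}_{k\in\N}$. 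Fatou's lemma gives $\sum_k|E^k|\le\liminf_j\sum_k|E_j^k|<\infty$, so $\widetilde E$ is admissible. Lower semicontinuity of $P_s$ under $L^1_{loc}$ convergence gives $P_s(E^k)\le\liminf_j P_s(E_j^k)$ for each $k$; for the $V_s$ term, point (\emph{iv}) of Proposition \ref{elementary_properties_vs} (Hölder continuity with respect to $L^1$ convergence) together with the volume bound yields $V_s(E^k)=\lim_j V_s(E_j^k)$ once one checks the $L^1_{loc}$ convergence upgrades to $L^1$ convergence on each component (again using uniform smallness of the tails, i.e. that for $R$ large $|E_j^k\setminus B_R|$ is uniformly small, which one gets from the uniform $P_s$ bound and the isoperimetric inequality controlling $|E_j^k|$ by $P_s(E_j^k)^{n/(n-s)}$ together with a tightness normalization). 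Summing over $k$ and applying Fatou once more to the series, one obtains
\[
\widetilde\E_s(\widetilde E)+\Lambda\Bigl|\sum_k|E^k|-\omega_n\Bigr|\le\liminf_{j\to\infty}\EsL(\widetilde E_j),
\]
using that $t\mapsto|t-\omega_n|$ is continuous and $\sum_k|E_j^k|\to\sum_k|E^k|$ (from the uniform tail smallness, the convergence of the volume series is not merely lower-semicontinuous but actual). Hence $\widetilde E$ is a minimizer of \eqref{eq:uncmin}, and by Proposition \ref{prop:equivcunc} it satisfies $\sum_k|E^k|=\omega_n$ and thus solves \eqref{infgen}.

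The main obstacle, and the reason the generalized formulation is needed, is exactly the lack of the "vanishing range action" property noted before Proposition \ref{infclassinfgen}: the nonlocal term $V_s$ interacts across all of $\R^n$, so one cannot a priori confine a minimizing sequence to a single bounded region, and mass may split into several bubbles running off to infinity in different directions — the generalized functional $\widetilde\E_s$ is built to "catch" precisely these bubbles as separate components $E^k$. The delicate technical point is therefore the passage to the limit in the infinite sum simultaneously with the diagonal extraction: one must ensure that no mass is lost either to infinity within a component (handled by per-component tightness normalization plus the decay in Proposition \ref{elementary_properties_ps}(\emph{iv})) or "between" components (handled by the volume-ordering and by the fact that components with vanishing volume contribute vanishingly to $\widetilde\E_s$, since $\E_s(E^k)$ is controlled by $|E^k|^{(n-s)/n}$ via the isoperimetric inequality and by $|E^k|^{(n-s)/n}$ for the $V_s$ part as in \eqref{stima-diffsimm}). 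Once tightness of the volume series is secured, the lower semicontinuity and continuity properties collected in Propositions \ref{elementary_properties_ps} and \ref{elementary_properties_vs} do the rest in a routine manner.
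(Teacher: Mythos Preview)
Your overall strategy---direct method on the generalized functional, with per-component extraction---is natural, but there is a genuine gap in the compactness step. Fixing a component index $k$ and translating $E_j^k$ so that it meets a fixed ball does \emph{not} prevent mass from escaping to infinity \emph{within that component}: a single set $E_j^k$ may itself be, for instance, the union of two balls drifting apart as $j\to\infty$, with both $|E_j^k|$ and $P_s(E_j^k)$ uniformly bounded, yet only half the mass survives in any $L^1_{\mathrm{loc}}$ limit. Proposition~\ref{elementary_properties_ps}(\emph{iv}) is a statement for a \emph{fixed} set, not a uniform tightness estimate along a sequence, so it cannot close this gap. The same issue undermines your upgrade from $L^1_{\mathrm{loc}}$ to $L^1$ convergence on each component, and hence both the claimed continuity of $V_s(E_j^k)$ and the claimed convergence (rather than mere lower semicontinuity) of the total volume $\sum_k|E_j^k|$.

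The paper sidesteps this by starting instead from a \emph{classical} minimizing sequence $\{E_h\}$ for $\E_{s,\Lambda}$ (which is also minimizing for the generalized problem by Proposition~\ref{infclassinfgen}), and then running a genuine concentration-compactness argument: partition $\R^n$ into cubes $Q_i$ of fixed side, track the masses $m_{i,h}=|E_h\cap Q_i|$, prove tightness of $\{m_{i,h}\}_i$ uniformly in $h$ via a relative isoperimetric (Poincar\'e) inequality for $P_s$ on cubes, and group the cube centers into equivalence classes according to whether their mutual distances stay bounded. Each equivalence class yields one component $E^i$ of the limiting generalized set, and no mass is lost because the tightness bound is uniform in $h$. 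This cube/equivalence-class machinery is precisely the missing ingredient in your argument; without it, starting from a generalized minimizing sequence buys you nothing, since each generalized component can itself split further.
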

\begin{proof}
 {The proof relies on the classical direct method in the Calculus of Variations. We begin by constructing a generalized minimizer using a standard minimizing sequence. We then prove that the associated functional is lower semicontinuous with respect to $L^1$-convergence, thereby ensuring that the limiting generalized set is indeed a minimizer for \eqref{eq:uncmin}.
 
 Let $\{E_h\}_{h\in\N}$ be a classical  minimizing sequence for the functional
    \begin{equation*}
        \E_{s,\Lambda}(E):=\E_s(E)+\Lambda \big| |E|-\omega_n\big|=P_s(E)+\eee V_s(E)+\Lambda \big||E|-\omega_n\big|.    \end{equation*} 
    By Proposition \ref{infclassinfgen}, $\{E_h\}_{h\ge 1}$ turns out to be a minimizing sequence even in the class of generalized sets. Using the ball $B_1$ as competitor, we have $$\sup_h \E_{s,\Lambda}(E_h)\le C_{n,s}(1+\varepsilon_0),$$
    where $C_{n,s}>0$ is the constant appearing in \eqref{stima_ast}. In particular, for $\Lambda>0$ sufficiently large, up to pass to a subsequence, $|E_h|\to m$, for some $ m\in (0,\infty)$. Fix $L:=10 m^{{1}/{n}}$ (so that in a cube of side lenght $L$, $E_h$ covers at most half of the volume of the cube, for $h$ large enough) and consider a partition of $\R^n$ into cubes $\{Q_{i}\}_{i\ge 1}$ with $Q_{i}:=[0,L]^n+z_i$, with $z_{i}\in (L\mathbb Z)^n$. Let $m_{i,h}:=|E_h\cap Q_{i}|$ be the mass of $E_h$ inside each cube. Let us rearrange the indexes so that, for every $h$, $m_{i,h}$ is nonincreasing in $i$.
	 
	We show that the sequence $\{m_{i,h}\}_{i\ge 1}$ is tight. With this purpose, let us consider the following localization of $P_s$ (see \cite[Subsection 1.2]{CT})
    \begin{equation}\label{Ps-loc}
        P_s(F;Q_i):=\int_{F\cap Q_i}\int_{\R^n\setminus F} \frac{dx\,dy}{|x-y|^{n+s}}
    \end{equation}
    for any measurable set $F\subseteq\R^n$.
    We first recall that
\begin{equation*}
P_s(E_h; Q_i) \geq \int_{E_h\cap Q_i}\int_{(\R^n\setminus E_h)\cap Q_i} \frac{dx\,dy}{|x-y|^{n+s}} = \frac{1}{2} \int_{Q_i}\int_{Q_i} \frac{|\chi_E(x) - \chi_E(y)|}{|x-y|^{n+s}}\,dx\,dy.
\end{equation*}
Then, since our choice of $L$ ensure that $|Q_{i}\cap E_h|\le |Q_{i}|/2$, by using the Poincaré-type inequality for fractional Sobolev spaces (see, e.g. \cite[Theorem 6.33]{Leo}) and proceeding exactly as in the proof of \cite[Lemma 2.5]{DiCNRV}, we obtain that there exists a constant $C = C(n,s,L)>0$ such that 
\[
C\,P_s(E_h; Q_i) \geq |E_h \cap Q_i|^{\frac{n-s}{n}}.
\]    
Hence, we have 
	\[
	\sum_{i} m^{\frac{n-s}{n}}_{i,h}\le C \sum_i P_s(E_h;Q_{i})= C\,P_s(E_h)\le c(n,s,L),
	\]
    for some constant $c(n,s,L)>0$.
	Since $m_{i,h}$ is nonincreasing in $i$, for every $I \geq 1$ and $i \geq I$ we have $m_{i,h}\le m_{I,h}\leq \frac{m}{I}$ which ensures
\begin{equation}\label{eq:tightper}
\sum_{i\ge I} m_{i,h}\le \left(\frac{m}{I}\right)^{\frac{s}{n}} \sum_{i\ge I} m_{i,h}^{\frac{n-s}{n}}\le  \left(\frac{m}{I}\right)^{\frac{s}{n}} c(n,s,L).
\end{equation}
    Hence, up to pass to a subsequence,  we have $\lim_{h\to \infty} m_{i,h}=:m_i$ and $\sum_i m_i=m$.

    From now on, we consider just the cubes $Q_i$ such that $m_{i,h}>0$. We rename such cubes $Q_{i,h}$, and coherently we relabel $z_{i,h}$ their centers. By the perimeter bound, up to extraction, we have for every $i$,  
	$E_h-z_{i,h}\to E^i$  in $L^1_{\rm loc}$ for some sets $E^i\subset\R^n$. We also freely assume that for every $i,j$, $|z_{i,h}-z_{j,h}|\to a_{ij}\in [0,\infty]$. 
	We define the following equivalence relation: we say that $i\sim j$ if $a_{ij}<\infty$ and denote by $[i]$ the equivalence class of $i$. 
	Note that if $i\sim j$ then  $E^i$ and $E^j$ are translated of each  other. For each class of equivalence we set 
	\[
	m_{[i]}:=\sum_{j\sim i} m_j 
	\]
	so that  $\sum_{[i]} m_{[i]}=m$. Then, for all $i$, using the local $L^1$-convergence of $E_h-z_{i,h}$ to $E^i$, we can deduce that $|E^i|=m_{[i]}$ (see, e.g., Step 1.3 in the proof of \cite[Theorem 1.2]{CT}).
	We now associate to each class of equivalence $[i]$ a single element $i$ (for instance, its first element) and set $\widetilde E=\{E^i\}_{i\in\N}$. We claim that the limiting generalized set constructed above is indeed a generalized minimizer, that is
    \begin{equation}\label{eq:toproveexistence}
	\sum_i\E_s(E^i)+ \Lambda \left|\sum_i |E^i|-\omega_n\right|\le \liminf_{h\to \infty} \big(\E_s(E_h)+\Lambda\big| |E_h|-\omega_n\big|\big).
	\end{equation} 
	
    Let us fix $I\in \N$ and $R>0$. Furthermore, for $h$ sufficiently large, we can assume that for $i,j\le I$ with $i\neq j$, $|z_{i,h}-z_{j,h}|\ge 5R$ which guarantees that $B_R(z_{i,h})\cap B_R(z_{j,h})=\emptyset$. We treat each term of the energy separately. Let us start with the $s$-perimeter, observing that
    \begin{equation*}
    \begin{split}
        \sum_{i=1}^I P_s(E_h-z_{i,h};B_R)&=\sum_{i=1}^I \int_{(E_h- z_{i,h})\cap B_R}\int_{\R^n\setminus (E_h-z_{i,h})}\frac{dx\,dy}{|x-y|^{n+s}}\\
        &=\sum_{i=1}^I \int_{E_h\cap B_R(z_{i,h})}\int_{\R^n\setminus E_h}\frac{dx\,dy}{|x-y|^{n+s}}\\
        &=\int_{E_h\cap \bigcup_{i=1}^I B_R(z_{i,h})}\int_{\R^n\setminus E_h}\frac{dx\,dy}{|x-y|^{n+s}}\\
        &\leq \int_{E_h}\int_{\R^n\setminus E_h}\frac{dx\,dy}{|x-y|^{n+s}}=P_s(E_h).      \end{split}
        \end{equation*}
    Then, recalling that $E_h-z_{i,h}\to E^i$ locally in $L^1$, by the lower semi-continuity of $P_s$ and by Fatou's Lemma, we obtain
    \begin{equation}\label{last-ps}
    \begin{split}
        \sum_{i=1}^I P_s(E^i)&\leq\sum_{i=1}^I \liminf_{h\to\infty}P_s(E_h-z_{i,h})\\
        &\leq \sum_{i=1}^I \liminf_{h\to\infty}\liminf_{R\to\infty}P_s(E_h-z_{i,h};B_R)\leq \liminf_{h\to\infty} P_s(E_h).   
        \end{split}    
    \end{equation}
    Similarly, concerning $V_s$, by (\emph{ii}) of Proposition \ref{elementary_properties_vs}, we have
    \begin{equation*}
        \sum_{i=1}^I V_s((E_h-z_{i,h})\cap B_R)=\sum_{i=1}^I V_s(E_h\cap B_r(z_{i,h}))\leq V_s\left(E_h\cap \bigcup_{i=1}^I B_R(z_{i,h})\right)\leq V_s(E_h),    
    \end{equation*}    
    which ensures, together with Fatou's Lemma,
    \begin{equation}\label{last-vs}
    \begin{split}
        \sum_{i=1}^I V_s(E^i)&\leq\sum_{i=1}^I \liminf_{h\to\infty}V_s(E_h-z_{i,h})\\
        &\leq \sum_{i=1}^I \liminf_{h\to\infty}\liminf_{R\to\infty}V_s((E_h-z_{i,h})\cap B_R)\leq \liminf_{h\to\infty} V_s(E_h).   
        \end{split}    
    \end{equation}
    Finally, for the penalization term, by tightness, it follows
\begin{equation}\label{last-volume}
\sum_{i=1}^\infty|E^i|=m=\lim_{h\to \infty} |E_h|.
\end{equation}
Hence, by \eqref{last-volume} and sending $I\to\infty$ in \eqref{last-ps} and \eqref{last-vs}, we obtain \eqref{eq:toproveexistence}, concluding the proof.}
	\end{proof}

	\section{Existence of minimizers}\label{sec:exmmin}
	In this section, we show that the generalized minimizers just found are classical minimizers for \eqref{infclass}, for $\varepsilon_0$ small enough. To this purpose, we recall the definition of $(\rho,R)$-minimizers of the $s$-perimeter, see \cite[Definition 1.3]{CT}. 
	
	\begin{Definition} \label{rho-min}
		Given a nonnegative and nondecreasing function $\rho : [0,+\infty [ \to \R$, we say that $E \subset \R^n$ is a $(\rho,R)$-minimizer of the $s$-perimeter if there exists $R>0$ such that for any $r \leq R$ and for any $x\in\mathbb{R}^n$, it holds
		\begin{equation}\label{eq-rho-min}
			P_s(E) \leq P_s(F) + \rho(r),
		\end{equation} 
		for any $F \subset \R^n$ with $E \Delta F\subset \subset B_r(x)$.
	\end{Definition}
	
	Now, we want to prove that any component of a generalized minimizer is a $(\rho,R)$-minimizer of the $s$-perimeter, with $\rho(r)=\eee Cr^{n-s}$, for some constant $C>0$ depending only on $n$ and $s$.
	
	\begin{Lemma}\label{Ek-rho-minim}
		Let $\eee_0>0$ be fixed and let $\widetilde{E}:=\gen{E}$ be a generalized minimizer of $\widetilde \E_s$ for $\eee\in (0,\eee_0]$. Then, for any $k\in \N$, the set $E^k$ is a $(\rho,R)$-minimizer for the $s$-perimeter with $\rho(r)=\eee_0\frac{10n\omega_n}{n-s} r^{n-s}$ and $R $ sufficiently small.
	\end{Lemma}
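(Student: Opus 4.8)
The plan is to exploit the generalized minimality of $\widetilde E$ by testing against a local perturbation of a single component $E^k$, and to show that the price one pays in the $V_s$ term and in the volume constraint is controlled by a quantity of order $r^{n-s}$, which is exactly the modulus $\rho(r)=\varepsilon_0 \frac{10 n\omega_n}{n-s} r^{n-s}$.

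First I would fix $k\in\N$, a radius $r$ small, a point $x\in\R^n$, and a competitor $F$ with $E^k\Delta F\subset\subset B_r(x)$. Form the new generalized set $\widetilde F$ obtained from $\widetilde E$ by replacing the $k$-th component $E^k$ with $F$ and leaving all other components unchanged. Note that $\widetilde F$ need not satisfy the volume constraint; however, since by Proposition \ref{prop:equivcunc} the constrained and unconstrained problems for $\EsL$ are equivalent (for $\Lambda>\tilde\Lambda$), it is cleaner to compare $\widetilde E$ and $\widetilde F$ directly through $\EsL$. Minimality of $\widetilde E$ gives $\EsL(\widetilde E)\le \EsL(\widetilde F)$, which, after cancelling the contributions of the components $E^j$ with $j\ne k$, reduces to
\[
\E_s(E^k)+\Lambda\Big||\textstyle\sum_j|E^j||-\omega_n\Big|\le \E_s(F)+\Lambda\Big||\textstyle\sum_{j\ne k}|E^j|+|F||-\omega_n\Big|.
\]
Using the triangle inequality on the two volume terms, the difference of penalizations is at most $\Lambda\,|\,|F|-|E^k|\,|\le \Lambda\,|E^k\Delta F|\le \Lambda\,|B_r|=\Lambda\,\omega_n r^n$. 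Since $\E_s=P_s+\varepsilon V_s$, rearranging yields
\[
P_s(E^k)\le P_s(F)+\varepsilon\,(V_s(F)-V_s(E^k))+\Lambda\,\omega_n r^n .
\]

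Next I would estimate the two remaining error terms. For the $V_s$ term, by the Hölder continuity property (\emph{iv}) of Proposition \ref{elementary_properties_vs}, $V_s(F)-V_s(E^k)\le \frac{n\omega_n^{s/n}}{n-s}|E^k\Delta F|^{\frac{n-s}{n}}\le \frac{n\omega_n^{s/n}}{n-s}(\omega_n r^n)^{\frac{n-s}{n}}=\frac{n\omega_n}{n-s} r^{n-s}$. For the volume penalization term, since $n>n-s$, for $r\le R$ with $R$ small we have $\omega_n r^n\le \omega_n R^{s} r^{n-s}$, so this contribution is also $\le C r^{n-s}$ provided $R$ is small enough; absorbing $\Lambda=\tilde\Lambda=C_{n,s}(1+\varepsilon_0)$ into the constant and using $\varepsilon\le\varepsilon_0$, one arrives at $P_s(E^k)\le P_s(F)+\varepsilon_0 \frac{10 n\omega_n}{n-s} r^{n-s}$ after choosing $R$ small enough that the volume term's contribution is dominated by the $V_s$ contribution (the explicit numerical constant $10$ leaving ample room). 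This is exactly \eqref{eq-rho-min} with the claimed $\rho$, proving the lemma.

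The main subtlety — not a deep obstacle, but the point that requires care — is the bookkeeping with the volume constraint: a localized perturbation of $E^k$ changes the total mass by at most $|B_r|=\omega_n r^n$, and one must check that this $r^n$ term is genuinely lower-order than $r^{n-s}$ on the relevant scale (hence the need to take $R$ small, depending on $n,s,\varepsilon_0,\Lambda$) and that passing between the constrained functional $\widetilde\E_s$ and the penalized functional $\EsL$ via Proposition \ref{prop:equivcunc} is legitimate. One should also note that the other components play no role, since $F$ differs from $E^k$ only inside $B_r(x)$ and the interaction terms $\int_{E^i}\int_{E^j}$ for $i\ne j$ are not affected by the definition of $\widetilde\E_s$ as a plain sum $\sum_k\E_s(E^k)$ — this is precisely why the generalized (decoupled) formulation is convenient here.
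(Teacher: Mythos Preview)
Your proof is correct and follows essentially the same argument as the paper: replace the single component $E^k$ by $F$, compare via the penalized functional $\EsL$ (using Proposition~\ref{prop:equivcunc}), control the $V_s$ variation by Proposition~\ref{elementary_properties_vs}(\emph{iv}), and absorb the volume penalization $\Lambda\omega_n r^n$ into the $r^{n-s}$ term by choosing $R$ small. The only cosmetic differences are that the paper phrases the starting point as ``$\widetilde E$ minimizes $\EsL$, hence also $\widetilde\E_s$'' rather than the converse, and writes the final chain of inequalities in one line; your bookkeeping and choice of $R$ are handled in the same way.
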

	
	\begin{proof} Let $\widetilde E=\gen{E}$ a minimizer for $\EsL$, with $\Lambda>\tilde \Lambda$, where $\tilde \Lambda=C_{n,s}(1+\eee_0)$ is given by Proposition \ref{prop:equivcunc}, (in light of Proposition \ref{prop:equivcunc}, $\widetilde E$ is a minimizer for $\widetilde\E_s$ as well). Let us fix $k\in\N$, $x\in \R^n$ and $R\leq1$ and let $F\subset\R^n$ be such that $E_k\Delta F\Subset B_r(x)$, for $r<R$. 

		Moreover, let us consider the sequence of sets $\widetilde{G}:=\left\{ G^l\right\}_{l\in\N}$ defined as $$G^l:=\left\{\begin{matrix}
			 F &\text{if}& l=k \\
			E^l &\text{if}& l\neq k.
		\end{matrix}\right.
		$$	
By minimality of $\widetilde{E}=\left\{E^l\right\}_{l\in\N}$ we have
		\begin{equation*}
		\begin{split}
			0&\leq \widetilde{\E}_s(\widetilde{G})-\widetilde{\E}_s(\widetilde{E})+\Lambda\left|\sum_l |G^l|-\omega_n\right|-\Lambda\left|\sum_l |E^l|-\omega_n\right|
			\\
			&=\sum_{\substack{l=1 \\ l\neq k}}^\infty(\E_s(G^l)-\E_s(E^l))+\E_s(G^k)-\E_s(E^k)+\Lambda\left|\sum_l |G^l|-\omega_n\right|-\Lambda\left|\sum_l |E^l|-\omega_n\right|\\
			&=\E_s(F)-\E_s(E^k) +\Lambda\left|\sum_l |G^l|-\omega_n\right|-\Lambda\left|\sum_l |E^l|-\omega_n\right|\\
			&\le \E_s(F)-\E_s(E^k) +\Lambda\big||E^k|-|F|\big|\\
			&\le \E_s(F)-\E_s(E^k) +\Lambda \omega_nr^n.
			\end{split}	
		\end{equation*}
		Hence, since $r<R\leq 1$, by rearranging the terms and using point (\emph{iv}) of Proposition \ref{elementary_properties_vs}, we get { 
		\begin{equation}\label{minimP_V}
			\begin{aligned}
             P_s(E^k)&\le P_s(F) + \varepsilon_0(V_s(F)-V_s(E^k))+\Lambda \omega_n r^n\\&\le  P_s(F)+\eee_0\frac{n\omega_n}{n-s} r^{n-s}+\Lambda \omega_n r^n \\&\leq P_s(F)+\eee_0\frac{10n\omega_n}{n-s}r^{n-s}.
            \end{aligned}
		\end{equation} 
	which concludes the proof.}
		
	\end{proof}
	
	As pointed out in \cite{GNR} when $s=1$, we cannot expect $C^{1,\alpha}$-regularity when $\rho$ has a critical power-like behavior. However, in light of \cite[Theorem 1.4, Remark 1.5 and Proposition 4.3]{CT}  when $\rho\sim \eee_0r^{n-s}$, with $\eee_0$ sufficiently small, we have that $(\rho,R)$-minimizers of the $s$-perimeter satisfy interior and exterior density estimates. 

{	In the following we will use the standard notation $E^{(t)}$ to denote the set of points of density $t$ of $E$, i.e. for $t\in [0,1]$, we set
	
 $$ E^{(t)}:=\left\{ x\in\R^n:\, \lim_{r\to 0^+}\frac{|E\cap B_r(x)|}{|B_r|}=t \right\}.$$}

We have the following result.

	\begin{Theorem}{\cite[Theorem 1.4]{CT}}\label{densityestimates}
	There exists $\overline{\varepsilon}\in(0,1)$ such that if  $E \subset \R^n $ is a $(\rho,R)$-minimizer of the $s$-perimeter with { $\rho(r)=\frac{10n\omega_n}{n-s}\,\overline{\varepsilon}\,r^{n-s}$}, then there exist {   constants} ${C_{DE}}>0$ and  ${r_{DE}}>0$ such that for any  $r \leq {r_{DE}}$, we have
		\begin{equation}\label{intdens}
			|E \cap B_r(x) | \geq {C_{DE}}\, r^n \quad\text{ for every } x \in E^{(1)}
		\end{equation}
		and
		\begin{equation}\label{outdens}
			|B_r(x) \setminus E| \geq {C_{DE}}\, r^n \quad \text{ for every } x \in E^{(0)},
		\end{equation}
	\end{Theorem}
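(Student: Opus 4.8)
Since the statement is \cite[Theorem 1.4]{CT} reproduced verbatim, the plan is to invoke it directly; for completeness let me outline the argument behind it. \textit{Reduction.} First I would observe that \eqref{intdens} and \eqref{outdens} are equivalent to one another: by \eqref{eq:ps} one has $P_s(F)=P_s(\R^n\setminus F)$, the $(\rho,R)$-minimality of $E$ passes to $\R^n\setminus E$ because $E\Delta F=(\R^n\setminus E)\Delta(\R^n\setminus F)$, and $E^{(0)}=(\R^n\setminus E)^{(1)}$; hence it suffices to prove \eqref{intdens}. Fix $x\in E^{(1)}$, translate so that $x=0$, and set $u(r):=|E\cap B_r|$, which is nondecreasing with $u'(r)=\mathcal{H}^{n-1}(E\cap\partial B_r)$ for a.e.\ $r$ by the coarea formula. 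The target is the uniform bound $u(r)\ge C_{DE}\,r^n$ for every $r\le r_{DE}$, with $C_{DE},r_{DE}$ depending only on $n,s,\overline{\varepsilon}$.

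\textit{The basic integro-differential inequality.} Next I would test \eqref{eq-rho-min} against the competitor $F:=E\setminus B_r$, which is admissible up to replacing $B_r$ by an arbitrarily slightly larger ball (indeed $E\cap B_r\Subset B_{r'}$ for every $r'>r$, and $\rho$ is continuous). Writing $L(A,B):=\int_A\int_B|x-y|^{-n-s}\,dx\,dy$ and applying the additivity formula of Proposition \ref{elementary_properties_ps}(i) to the disjoint pair $E\cap B_r$, $E\setminus B_r$, one gets
\[
P_s(E\cap B_r)+P_s(E\setminus B_r)=P_s(E)+2\,L(E\cap B_r,E\setminus B_r),
\]
so that $(\rho,R)$-minimality, i.e.\ $P_s(E)\le P_s(E\setminus B_r)+\rho(r)$, yields $P_s(E\cap B_r)\le 2\,L(E\cap B_r,\R^n\setminus B_r)+\rho(r)$. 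For the left side I would invoke the fractional isoperimetric inequality \eqref{non-isop-in}, giving $P_s(E\cap B_r)\ge c(n,s)\,u(r)^{(n-s)/n}$. For the nonlocal tail, integrating the elementary bound $\int_{\R^n\setminus B_r}|x-y|^{-n-s}\,dy\le \tfrac{n\omega_n}{s}(r-|x|)^{-s}$ (valid for $x\in B_r$) over $x\in E\cap B_r$ and using the coarea formula in the radial variable gives $L(E\cap B_r,\R^n\setminus B_r)\le \tfrac{n\omega_n}{s}\int_0^r (r-t)^{-s}u'(t)\,dt$. Together these produce, for a.e.\ $r\le R$,
\[
c(n,s)\,u(r)^{(n-s)/n}\ \le\ C(n,s)\int_0^r\frac{u'(t)}{(r-t)^s}\,dt\ +\ \frac{10n\omega_n}{n-s}\,\overline{\varepsilon}\,r^{n-s}.
\]

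\textit{Closing the estimate, and the main obstacle.} Finally I would run a De Giorgi/Giusti-type iteration on dyadic radii: if $u(r_0)<C_{DE}\,r_0^n$ for some $r_0\le r_{DE}$, then integrating the last inequality over $(0,r_0)$ and using Fubini (which converts $\int_0^{r_0}\!\int_0^r(r-t)^{-s}u'(t)\,dt\,dr$ into a fixed multiple of $r_0^{1-s}u(r_0)$) forces $u(r_0)/r_0^n$ to be small, and iterating the same step down the scales makes $u(r)/r^n\to0$ as $r\to0^+$, contradicting $x\in E^{(1)}$; this fixes the absolute thresholds $C_{DE}$ and $r_{DE}$. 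The crucial and delicate point — and exactly the reason for the smallness hypothesis on $\overline{\varepsilon}$ — is that $\rho(r)$ has the \emph{critical} scaling $r^{n-s}$, the same as $P_s$ itself: a subcritical modulus could be absorbed for free, whereas here one genuinely needs $\overline{\varepsilon}$ below a dimensional constant so that the geometric gain at each iteration step dominates the error contributed by $\rho$. A secondary difficulty, absent from the local theory, is that the interaction $L(E\cap B_r,E\setminus B_r)$ does not collapse to a surface integral over $\partial B_r$, so the radial tail estimate above (or a dyadic-annulus refinement of it) is what plays the role of the classical $\mathcal{H}^{n-1}(E\cap\partial B_r)$ term. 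The remaining steps are routine, and the complete argument is carried out in \cite{CT}.
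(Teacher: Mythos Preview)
The paper does not prove this theorem; it is stated as a direct citation of \cite[Theorem 1.4]{CT} and is used as a black box (the only further discussion, in Remark~\ref{rmk-asy}, merely tracks the dependence on $s$ of the constants by verifying the abstract structural hypotheses (H4), (H6), (H12)--(H14) from \cite{CT}). Your proposal likewise invokes the citation, so the approaches coincide; the outline you add for completeness is a correct sketch of the standard density-estimate argument and aligns with the tail and Fubini computations the paper records in Remark~\ref{rmk-asy}.
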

	
	As a consequence of Lemma \ref{Ek-rho-minim} and Theorem \ref{densityestimates}, we get the following result.
	\begin{Corollary}\label{densityestimates-Ek}
		Let $\widetilde{E}:=\gen{E}$ be a generalized minimizer of $\widetilde \E_s$. Then there exist {   constants} ${C_{DE}}>0$ and  ${r_{DE}}>0$, such that for any  $r \leq {r_{DE}}$ and $k\in \N$, 
		\begin{equation}\label{intdens-Ek}
			|E^k \cap B_r(x) | \geq {C_{DE}}\, r^n \quad\text{ for every } x \in (E^k)^{(1)}
		\end{equation}
		and
		\begin{equation}\label{outdens-Ek}
			|B_r(x) \setminus E^k| \geq {C_{DE}}\, r^n \quad \text{ for every } x \in (E^k)^{(0)}.
	\end{equation}\end{Corollary}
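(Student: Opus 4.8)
The plan is to combine directly the $(\rho,R)$-minimality established in Lemma \ref{Ek-rho-minim} with the density estimates of Theorem \ref{densityestimates}, the only subtle points being the smallness of the coefficient in front of the modulus of continuity and the uniformity of the resulting constants with respect to the index $k$.

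First, I would fix from now on the threshold $\varepsilon_0 \le \overline{\varepsilon}$, where $\overline{\varepsilon}\in(0,1)$ is the universal constant provided by Theorem \ref{densityestimates}, and work with $\varepsilon \in (0,\varepsilon_0]$. With this choice, Lemma \ref{Ek-rho-minim} guarantees that for every $k\in\N$ the component $E^k$ is a $(\rho,R)$-minimizer of the $s$-perimeter with $\rho(r)=\varepsilon_0\frac{10n\omega_n}{n-s}r^{n-s}$ and some $R>0$ small, independent of $k$. Since $\varepsilon_0\le\overline{\varepsilon}$, one has $\rho(r)\le\overline{\rho}(r):=\frac{10n\omega_n}{n-s}\,\overline{\varepsilon}\,r^{n-s}$ for every $r\ge 0$; hence the defining inequality \eqref{eq-rho-min} for $E^k$ with modulus $\rho$ immediately upgrades, verbatim, to the same inequality with modulus $\overline{\rho}$. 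In other words, each $E^k$ is a $(\overline{\rho},R)$-minimizer of the $s$-perimeter in the precise sense required by Theorem \ref{densityestimates}.

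Applying Theorem \ref{densityestimates} to each $E^k$ then yields constants $C_{DE}>0$ and $r_{DE}>0$ (possibly after shrinking $R$) such that the interior estimate \eqref{intdens-Ek} holds for every $x\in(E^k)^{(1)}$ and the exterior estimate \eqref{outdens-Ek} holds for every $x\in(E^k)^{(0)}$, whenever $r\le r_{DE}$. There is no genuine obstacle here; the only point deserving attention — and the reason for the remark below — is that the constants $C_{DE}$ and $r_{DE}$ furnished by Theorem \ref{densityestimates} depend only on the fixed modulus $\overline{\rho}$, that is, only on $n$ and $s$, and not on the particular $(\overline{\rho},R)$-minimizer under consideration. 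Consequently they can be chosen to be the same for all $k\in\N$, which is exactly the content of the statement. (The borderline case $E^k=\emptyset$ is trivial, since then \eqref{intdens-Ek} is vacuous and \eqref{outdens-Ek} holds provided $C_{DE}\le\omega_n$.)
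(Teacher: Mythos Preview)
Your argument is correct and follows exactly the approach indicated in the paper, which presents the corollary as an immediate consequence of Lemma \ref{Ek-rho-minim} and Theorem \ref{densityestimates} without spelling out the details. Your only addition is making explicit the (implicit) choice $\varepsilon_0\le\overline{\varepsilon}$ and the observation that the constants $C_{DE},r_{DE}$ depend only on $n,s$ and not on $k$, which is precisely what is needed.
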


    Using the density estimates for the components of generalized minimizers, we deduce the existence of a classical minimizer for \eqref{infclass} from the existence of a generalized one.	
	
    \begin{Theorem}[Existence of minimizers for \eqref{infclass}] \label{ex-cla}
		There exists a constant $\eee_0=\eee_0(n,s)>0$, such that, for every $\eee\in (0,\eee_0)$ the variational problem \eqref{infclass} admits a solution. 

	\end{Theorem}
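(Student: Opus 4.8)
The plan is to use the density estimates of Corollary~\ref{densityestimates-Ek} to force any generalized minimizer to be made of a single non-null component, and then to show that this component is a classical minimizer of \eqref{infclass}; the smallness of $\varepsilon_0$ enters only to make those density estimates available. Accordingly I would fix $\varepsilon_0$ to be the threshold $\overline{\varepsilon}$ of Theorem~\ref{densityestimates}, so that for every $\varepsilon\in(0,\varepsilon_0)$ Lemma~\ref{Ek-rho-minim} applies and Corollary~\ref{densityestimates-Ek} holds.

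First step. Let $\widetilde E=\{E^k\}_{k\in\N}$ be a generalized minimizer of $\widetilde{\mathcal E}_s$, which exists by the previous proposition and, by the second part of Proposition~\ref{prop:equivcunc}, satisfies $\sum_k|E^k|=\omega_n$. Applying the interior density estimate \eqref{intdens-Ek} at radius $r_{DE}$ at a point of density one of $E^k$ (such a point exists as soon as $|E^k|>0$, by the Lebesgue density theorem) gives $|E^k|\ge C_{DE}\,r_{DE}^n=:m_0>0$ for every non-null component. Since the volumes add up to $\omega_n$, at most $\lfloor\omega_n/m_0\rfloor$ components are non-null, so after relabelling we may write $\widetilde E=\{E^1,\dots,E^N,\emptyset,\emptyset,\dots\}$ with $m_i:=|E^i|>0$ and $\sum_{i=1}^N m_i=\omega_n$.

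Second step. I would introduce the profile function $\mathcal I(m):=\inf\{\mathcal E_s(E):\,|E|=m\}$. By part~(iii) of Propositions~\ref{elementary_properties_ps} and \ref{elementary_properties_vs} (both $P_s$ and $V_s$ scale like $\lambda^{n-s}$ under $E\mapsto\lambda E$, while the volume scales like $\lambda^n$), one gets $\mathcal I(m)=(m/\omega_n)^{(n-s)/n}\,\mathcal I(\omega_n)$; moreover $\mathcal I(\omega_n)\ge P_s(B_1)>0$, thanks to the fractional isoperimetric inequality \eqref{non-isop-in} and $V_s\ge0$. Since $(n-s)/n\in(0,1)$, the map $m\mapsto m^{(n-s)/n}$ is strictly concave and vanishes at the origin, hence strictly subadditive, and therefore
\[
\sum_{i=1}^N \mathcal I(m_i)\;=\;\mathcal I(\omega_n)\,\omega_n^{-(n-s)/n}\sum_{i=1}^N m_i^{(n-s)/n}\;\ge\;\mathcal I(\omega_n),
\]
with strict inequality whenever $N\ge 2$. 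On the other hand, by Proposition~\ref{infclassinfgen}, by the minimality of $\widetilde E$, and since $\mathcal E_s(\emptyset)=0$,
\[
\mathcal I(\omega_n)\;=\;\inf\{\mathcal E_s(E):|E|=\omega_n\}\;=\;\widetilde{\mathcal E}_s(\widetilde E)\;=\;\sum_{i=1}^N\mathcal E_s(E^i)\;\ge\;\sum_{i=1}^N\mathcal I(m_i).
\]
Comparing the two displays rules out $N\ge 2$, so $N=1$: then $\widetilde E$ reduces to a single set $E^1$ with $|E^1|=\omega_n$ and $\mathcal E_s(E^1)=\mathcal I(\omega_n)=\inf\{\mathcal E_s(E):|E|=\omega_n\}$, i.e.\ $E^1$ solves \eqref{infclass}.

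The delicate point — and really the only nontrivial ingredient — is the first step: a priori a generalized minimizer could split its mass among infinitely many, possibly degenerate, pieces, and it is precisely the $(\rho,R)$-minimality with the critical modulus $\rho(r)\sim\varepsilon_0 r^{n-s}$ (Lemma~\ref{Ek-rho-minim}) together with the resulting uniform density estimates (Theorem~\ref{densityestimates}, Corollary~\ref{densityestimates-Ek}) that excludes this and turns the components into genuine sets of volume bounded below; this is exactly where the smallness of $\varepsilon_0$ is used. Once this is granted, the conclusion is the classical scaling-and-strict-subadditivity dichotomy, which is insensitive to the value of $\varepsilon$.
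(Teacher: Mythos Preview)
Your proof is correct, but the second step takes a genuinely different route from the paper. Both arguments share the first step: Lemma~\ref{Ek-rho-minim} and Corollary~\ref{densityestimates-Ek} force any generalized minimizer to have finitely many non-null components, each of volume at least $C_{DE}\,r_{DE}^n$. From there the paper translates the components far apart into a single admissible set $F_L$, applies the quantitative fractional isoperimetric inequality \eqref{isop-F2M3} to bound $P_s(F_L)-P_s(B_1)$ from below by $c_0|F_L\Delta B_1(x_0)|^2$, and observes that if $M\ge2$ then $|F_L\Delta B_1(x_0)|\ge (M-1)C_{DE}\,r_{DE}^n$, contradicting the energy comparison with $B_1$ once $\varepsilon_0$ is taken small enough (a second, quantitative smallness condition beyond the threshold $\overline{\varepsilon}$). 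You instead exploit the fact that both $P_s$ and $V_s$ scale like $\lambda^{n-s}$, so the profile $\mathcal I(m)=(m/\omega_n)^{(n-s)/n}\mathcal I(\omega_n)$ is explicit and, by strict subadditivity of $m\mapsto m^{(n-s)/n}$, splitting the mass is always strictly unfavourable; combined with Proposition~\ref{infclassinfgen} this rules out $N\ge2$ without invoking \eqref{isop-F2M3} and without any further smallness of $\varepsilon_0$ beyond $\overline{\varepsilon}$. Your argument is more elementary and yields a larger (and more explicit) threshold; the paper's route, on the other hand, already anticipates the $L^1$-closeness to a ball that is needed in the next section (Proposition~\ref{L1-close}), so within the paper's overall strategy the use of \eqref{isop-F2M3} here is not wasted.
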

	\begin{proof} Let $\widetilde{E}:=\{E^k\}_{k\in\N}$ be a minimizer for $\widetilde \E_{s}$. Since $\sum_{k=1}^{\infty}|E^k|=\omega_n$ and by \eqref{intdens-Ek}, we get
		\begin{equation}\label{use-den-est}
			|E^k|\geq |E^k\cap B_{r_{DE}}|\geq C_0\,r^n_{DE},
		\end{equation}
		for any $x\in (E^k)^{(1)}$, where $r_{DE}>0$ is the {   constant} given by Corollary \ref{densityestimates-Ek}, we conclude that $|E^k|\neq 0$ just for a finite number of indexes $k=1,\ldots,M$. We sort the indexes so that 
		$$ |E^1|\geq|E^2|\geq\ldots\geq |E^M|. $$So, in particular, $|E^{1}|\geq\frac{\omega_n}{M}$. Fixed $L>0$, let us consider the following family of sets 
		\begin{equation*}
			E^1_L:=E^1 \quad \text{and}\quad E^k_L:=E^k+L^ke_1, \quad \text{for }k=2,\ldots,M. 
		\end{equation*}
		Moreover, we define $F_L:=\bigcup_{k=1}^ME^k_L$. {By \eqref{use-den-est} and the volume constraint}, for $L > 0$ sufficiently large, we can assume that $|E_L| = \omega_n$ and  $|E^i_L\cap E^j_L|=0$, for every $i,j\in\{1,\ldots,M\}$, with $i\neq j$. By (\emph{i}) and (\emph{ii}) of Proposition \ref{elementary_properties_ps}, we have
		\begin{equation}\label{sud-add-FL}
			P_s(F_L)\leq \sum_{i=1}^M P_s(E^i_L)=\sum_{i=1}^M P_s(E^i).
		\end{equation}
		On the other hand, since $\widetilde{E}:=\gen{E}$, is a generalized minimizer and, as previously observed, $\widetilde{\E}_{s}(\widetilde{E})=\sum_{k=1}^M\E_{s}(E^k)$, by considering the sequence of sets $\widetilde{B}:=\gen{B}$, defined by
		$$ B^1:=B_1\qquad \text{and} \qquad B^k:=\emptyset \quad \text{for all }\,k\geq2, $$
		then
		\begin{equation*}
			0\geq\widetilde{\E}_{s}(\widetilde E)-\widetilde{\E}_{s}(\widetilde{B})=\sum_{k=1}^M\E_{s}(E^k)-\E_{s}(B_1)\end{equation*}
		which implies  
		\begin{equation}\label{min-gen-B1}
			\sum_{i=1}^M P_s(E^k)-P_s(B_1)\leq \eee \left(V_s(B_1)-\sum_{i=1}^M V_s(E^k)\right)\leq \eee \frac{n\omega_n}{n-s}.
		\end{equation}
		
		Furthermore, by \eqref{sud-add-FL} and recalling the quantitative estimate \eqref{isop-F2M3}, we have 
		\begin{equation}\label{quant-FL}
			\sum_{k=1}^M P_s(E^k)-P_s(B_1)\geq P_s(F_L)-P_s(B_1)\geq {  c_0(n,s)} \,|F_L\Delta B_1(x_0)|^2,
		\end{equation}
	    where the constant {$c_0(n,s):=P_s(B_1)\omega_n^{-1}\,c_Q(n,s)$ and $c_Q$ is the one appearing in \eqref{isop-F2M3}} and $x_0\in\R^n$ realizes the minimum in the Fraenkel asymmetry of $F_L$. By choosing $L>0$ sufficiently large, we can suppose there exists $k^\ast\in\{1,\ldots,M\}$ such that $|B_1(x_0)\cap E^{k^\ast}_L |>0$ and $|B_1(x_0)\cap E^{k}_L|=0$ for all $k\neq k^\ast$. Thus, in particular,
		\begin{equation*}
			\begin{split}
				|F_L\Delta B_1(x_0)| &\geq |E^{k^\ast}_L\Delta B_1(x_0)|+\sum_{\substack{k=1 \\ k\neq k^\ast}}^M|E^k_L|\\
				&=|E^{k^\ast}_L\Delta B_1(x_0)|+\sum_{\substack{k=1 \\ k\neq k^\ast}}^M|E^k|\geq |E^{k^\ast}_L\Delta B_1(x_0)|+(M-1){ C_{DE}\,r_{DE}^n},
			\end{split}
		\end{equation*}
		where in the last inequality we used \eqref{densityestimates-Ek}. This, together with \eqref{min-gen-B1} and \eqref{quant-FL}, ensures
		$$ c_0(n,s) \left(|E^{k^\ast}_L\Delta B_1(x_0)|+(M-1){ C_{DE}\,r_{DE}^n}\right) \leq \eee^{\frac 1 2} \left(\frac{n\omega_n}{n-s}\right)^{\frac 12},$$
		which leads to a contradiction, as soon as $\eee \in (0,\eee_0]$ with $\eee_0<{ c_0(n,s)}^2C_{DE}^2\,r^{2n}_{DE}\,\frac{n-s}{n\omega_n}$ unless $M=1$. This concludes the proof.
	\end{proof}
	
	\section{Regularity of minimizers}\label{sec:regularity} 
{In the previous section, we established the existence of minimizers for \eqref{infclass}. This result was obtained by relying on a mild regularity theory for $(\rho, R)$-minimizers of the $s$-perimeter, encoded in suitable density estimates. The goal of this section is to improve upon this regularity. First, we show that, for sufficiently small $\varepsilon$, minimizers of \eqref{infclass} are close in $L^1$-topology to a ball. Then, by further exploiting the density estimates, we upgrade this closeness to convergence in the Hausdorff distance. At this stage, we prove that minimizers of \eqref{infclass} are indeed $\Lambda$-minimizers of the $s$-perimeter in the sense of \cite[Section 3]{F2M3}, and thus we may apply the regularity results developed therein (specifically, \cite[Corollary 3.6]{F2M3}) to reduce to the case of nearly spherical sets.}

	\begin{Proposition} \label{L1-close} Let $\varepsilon_0=\eee_0(n,s)>0$ be given by Theorem \ref{ex-cla}, and  for $\eee\in (0,\eee_0]$, let $E_\eee$ be a minimizer of \eqref{infclass}.
		Then, as $\eee\to 0$, there exists $x_\varepsilon\in\R^n$ such that the sets $E_\eee-x_\varepsilon$ converge in  $L^1$-topology  to the unit ball $B_1=B_1(0)$. Namely there holds 
		\begin{equation}\label{stima-vic-L1}
			\left| (E_\eee-x_\varepsilon )\Delta B_1\right| \le { \gamma(n,s)}\eee^{1/2}.
		\end{equation}
	\end{Proposition}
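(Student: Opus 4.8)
The plan is to prove \eqref{stima-vic-L1} by a compactness/contradiction argument combined with the sharp fractional quantitative isoperimetric inequality \eqref{isop-F2M3}. First I would fix a minimizer $E_\eee$ of \eqref{infclass} and, using the ball $B_1$ as a competitor, record the elementary energy bound
\[
P_s(E_\eee)+\eee\, V_s(E_\eee)=\E_s(E_\eee)\le \E_s(B_1)=P_s(B_1)+\eee\, V_s(B_1).
\]
Since $V_s(E_\eee)\ge 0$, this immediately gives $P_s(E_\eee)\le P_s(B_1)+\eee\, V_s(B_1)=P_s(B_1)+\eee\frac{n\omega_n}{n-s}$, i.e. the isoperimetric deficit satisfies $\delta_s(E_\eee)\le C(n,s)\,\eee$. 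Here I use that $|E_\eee|=\omega_n$, so $B_r=B_1$ in the definitions of $\delta_s$ and the Fraenkel asymmetry.

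Next I would feed this deficit bound into \eqref{isop-F2M3}: there is $x_\eee\in\R^n$ (the center realizing the Fraenkel asymmetry) such that
\[
\Big(\frac{|E_\eee\Delta B_1(x_\eee)|}{\omega_n}\Big)^2 = \alpha(E_\eee)^2\le c_Q(n,s)\,\delta_s(E_\eee)\le c_Q(n,s)\, C(n,s)\,\eee.
\]
Taking square roots and absorbing $\omega_n$ and the constants into a single $\gamma(n,s)$ yields exactly $|(E_\eee-x_\eee)\Delta B_1|\le \gamma(n,s)\,\eee^{1/2}$, which is \eqref{stima-vic-L1}; and of course this $\to 0$ as $\eee\to 0$, giving the $L^1$-convergence statement. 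Strictly speaking one should also note $\alpha$ here is the Fraenkel asymmetry with the ball of the same volume, consistent with the normalization $|E_\eee|=\omega_n$.

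There is essentially no hard analytic obstacle here: the only subtlety is bookkeeping — making sure the normalization $|E_\eee|=\omega_n$ is in force (so that comparisons are with $B_1$ and $V_s(B_1)=\frac{n\omega_n}{n-s}$ by \eqref{Vs-balls}), that $\eee\le\eee_0$ so that Theorem \ref{ex-cla} guarantees $E_\eee$ exists, and that the constant $c_Q$ from \eqref{isop-F2M3} is uniform in $s$ on compact subsets of $(0,1)$, which is exactly how it is stated. One might worry whether \eqref{isop-F2M3} requires $n\ge 2$ and $s\ge s_0$; since $n$ and $s$ are fixed throughout, one simply absorbs the resulting constant into $\gamma(n,s)$. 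If desired, one could phrase the argument contradictorily (assume a sequence $\eee_j\to 0$ with $|(E_{\eee_j}-x)\Delta B_1|$ bounded below for all $x$), but the direct quantitative route via \eqref{isop-F2M3} is cleaner and gives the explicit rate $\eee^{1/2}$ for free.
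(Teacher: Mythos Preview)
Your argument is correct and is essentially identical to the paper's own proof: compare $E_\eee$ with $B_1$ to bound $P_s(E_\eee)-P_s(B_1)\le \eee\,V_s(B_1)=\eee\frac{n\omega_n}{n-s}$, then apply the sharp fractional quantitative isoperimetric inequality \eqref{isop-F2M3} and take square roots. The only cosmetic difference is that the paper writes the first step as $P_s(E_\eee)-P_s(B_1)\le \eee(V_s(B_1)-V_s(E_\eee))\le \eee V_s(B_1)$ rather than dropping the nonnegative term $\eee V_s(E_\eee)$ directly.
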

	\begin{proof}
		By minimality of $ E_\eee$ in \eqref{infclass}, we get, 
		\begin{equation}\label{minim-Eeee}
			P_s(E_\eee)-P_s(B_1)\leq \eee (V_s(B_1)-V_s(E_\eee))\leq \eee V_s(B_1)=\eee \frac{n\omega_n}{n-s}.
		\end{equation}
		Then, recalling the quantitative estimate \eqref{isop-F2M3}, we have 
		\[
		\min_{x\in \R^n}\left| E_\eee \Delta B_1(x)\right|^2 \le 
		{ \gamma_0(n,s)}(P_s(E_\eee)-P_s(B)),
		\]
		with {$\gamma_0(n,s):=\frac{\omega_nc_Q}{P_s(B_1)}$ where the constant $c_Q$ is the one appearing in \eqref{isop-F2M3}.} This, together with \eqref{minim-Eeee}, gives \eqref{stima-vic-L1}, with {$\gamma(n,s):=(\gamma_0(n,s)\frac{n\omega_n}{n-s})^{1/2}$}.
	\end{proof}
	Next, we show, employing the density estimates found in Theorem \ref{densityestimates}, that the $L^1$-proximity can be improved into an $L^\infty$ (i.e. Hausdorff) one.
	
\begin{Proposition}\label{prop:Lambdareg}
Let $x_\varepsilon\in\R^n$ be the points found in Proposition \ref{L1-close}. Given $\delta>0$, there exists $\varepsilon(\delta)>0$, with $\varepsilon(\delta)\to0$ as $\delta\to0$, such that if $\varepsilon<\varepsilon(\delta)$ and $E_\varepsilon$ is a minimizer of \eqref{infclass}, then 
		\[
		\partial (E_\varepsilon-x_\varepsilon)\subset B_{1+\delta}(0)\setminus B_{1-\delta}(0).
		\]
        In particular, $E_\varepsilon-x_\varepsilon$ converges in Hausdorff metric to $B_1(0)$ as $\varepsilon\to0$.
	\end{Proposition}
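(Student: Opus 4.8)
The plan is to argue by contradiction, upgrading the $L^1$-bound \eqref{stima-vic-L1} to a Hausdorff estimate by exploiting the uniform density estimates enjoyed by minimizers of \eqref{infclass}. First I would observe that any minimizer $E_\varepsilon$ of \eqref{infclass} is, by Proposition \ref{infclassinfgen}, also a generalized minimizer of $\widetilde{\E}_s$ with a single nonempty component (the generalized set $(E_\varepsilon,\emptyset,\emptyset,\dots)$ attains the generalized infimum). Hence Lemma \ref{Ek-rho-minim} applies and $E_\varepsilon$ is a $(\rho,R)$-minimizer of the $s$-perimeter with $\rho(r)=\varepsilon\,\frac{10n\omega_n}{n-s}\,r^{n-s}$; provided $\varepsilon$ is small enough that this modulus stays below the threshold $\bar\varepsilon$ of Theorem \ref{densityestimates}, Corollary \ref{densityestimates-Ek} furnishes constants $C_{DE},r_{DE}>0$, \emph{independent of $\varepsilon$}, for which the interior and exterior density estimates \eqref{intdens-Ek}--\eqref{outdens-Ek} hold for $E_\varepsilon$. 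I would then pass to the good open representative of $\tilde E_\varepsilon:=E_\varepsilon-x_\varepsilon$: as is standard for $(\rho,R)$-minimizers satisfying density estimates (cf.\ \cite{CT}), $\tilde E_\varepsilon^{(1)}$ and $\tilde E_\varepsilon^{(0)}$ are open, $\partial\tilde E_\varepsilon=\overline{\partial^*\tilde E_\varepsilon}$ is their common topological boundary, and every point of $\partial\tilde E_\varepsilon$ is simultaneously a limit of points of $\tilde E_\varepsilon^{(1)}$ and of points of $\tilde E_\varepsilon^{(0)}$; since $0<|\tilde E_\varepsilon|=\omega_n<\infty$, all these sets are nonempty. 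It is harmless to restrict to $\delta\in(0,\delta_0)$ for a suitable $\delta_0=\delta_0(n,s,r_{DE})$ with $\delta_0\le 4r_{DE}$, the assertion for larger $\delta$ being weaker.

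For the contradiction step, assume $\partial\tilde E_\varepsilon\not\subset B_{1+\delta}(0)\setminus B_{1-\delta}(0)$ and pick $x_0\in\partial\tilde E_\varepsilon$ with $|x_0|\ge1+\delta$ or $|x_0|<1-\delta$. In the first case I would choose $x_1\in\tilde E_\varepsilon^{(1)}$ with $|x_1-x_0|<\delta/4$, so that $|x_1|>1+\delta/2$ and hence $B_r(x_1)\subset\R^n\setminus B_1$ for $r:=\delta/4\ (\le r_{DE})$; then \eqref{intdens-Ek} and \eqref{stima-vic-L1} give
\[
\gamma(n,s)\,\varepsilon^{1/2}\ \ge\ |\tilde E_\varepsilon\Delta B_1|\ \ge\ |\tilde E_\varepsilon\setminus B_1|\ \ge\ |\tilde E_\varepsilon\cap B_r(x_1)|\ \ge\ C_{DE}\,(\delta/4)^n.
\]
In the second case I would instead choose $x_1\in\tilde E_\varepsilon^{(0)}$ with $|x_1-x_0|<\delta/4$, so that $|x_1|<1-\delta/2$ and $B_r(x_1)\subset B_1$ with $r=\delta/4$; then \eqref{outdens-Ek} and \eqref{stima-vic-L1} yield $\gamma(n,s)\,\varepsilon^{1/2}\ge|B_1\setminus\tilde E_\varepsilon|\ge C_{DE}(\delta/4)^n$. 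Either way one obtains a contradiction as soon as $\varepsilon<\varepsilon(\delta):=\min\{\varepsilon_0,\,(C_{DE}\,\delta^n/(4^n\gamma(n,s)))^2\}$, and $\varepsilon(\delta)\to0$ as $\delta\to0$; this proves the inclusion $\partial(E_\varepsilon-x_\varepsilon)\subset B_{1+\delta}(0)\setminus B_{1-\delta}(0)$ for $\varepsilon<\varepsilon(\delta)$.

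For the final assertion I would argue that, once $\partial\tilde E_\varepsilon$ lies in the closed annulus $\{1-\delta\le|x|\le1+\delta\}$, each of the connected open sets $B_{1-\delta}$ and $\R^n\setminus\overline{B_{1+\delta}}$ avoids $\partial\tilde E_\varepsilon$ and is therefore contained entirely in $\tilde E_\varepsilon^{(1)}$ or entirely in $\tilde E_\varepsilon^{(0)}$; the unbounded one must have density $0$ (otherwise $|\tilde E_\varepsilon|=\infty$), while $B_{1-\delta}$ cannot have density $0$ (otherwise $\tilde E_\varepsilon$ would be essentially contained in the thin annulus, forcing $\omega_n=|\tilde E_\varepsilon|\le\omega_n((1+\delta)^n-(1-\delta)^n)$, false for $\delta$ small). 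Hence $B_{1-\delta}\subset\tilde E_\varepsilon\subset B_{1+\delta}$, so $\overline{\tilde E_\varepsilon}$ lies within Hausdorff distance $\delta$ of $\overline{B_1}$; letting $\delta\to0$ (equivalently $\varepsilon\to0$) gives the claimed Hausdorff convergence.

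The step I expect to require the most care — though it is bookkeeping rather than a genuine difficulty — is ensuring that the density estimates hold for $E_\varepsilon$ with constants \emph{uniform in $\varepsilon$}: this is exactly where the smallness of $\varepsilon$, keeping the modulus $\rho(r)=\varepsilon C r^{n-s}$ below the critical threshold $\bar\varepsilon$, is indispensable, and it is also what forces us to work with the good open representative so that topological boundary points can be approached both by density-$1$ and by density-$0$ points. Once this is in place, the contradiction is the classical argument that a boundary point far from $\partial B_1$ costs a definite amount of symmetric difference.
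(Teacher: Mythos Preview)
Your proof is correct and follows essentially the same approach as the paper: both upgrade the $L^1$-closeness of Proposition~\ref{L1-close} to Hausdorff closeness by combining the uniform density estimates for $E_\varepsilon$ (via Lemma~\ref{Ek-rho-minim} and Theorem~\ref{densityestimates}) with a contradiction argument showing that a boundary point far from $\partial B_1$ would force too much symmetric difference. Your version is somewhat more detailed than the paper's (you explicitly handle the good representative, the approximation of boundary points by density-$1$/density-$0$ points, and the deduction of $B_{1-\delta}\subset\tilde E_\varepsilon\subset B_{1+\delta}$), but the core idea and structure are the same.
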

	\begin{proof}
		{For simplicity of notation and without loss of generality, we may assume that $x_\varepsilon =0$.}
		By  Corollary \ref{densityestimates-Ek}, there exist two {constants
		$\bar\varepsilon$ and $r_{DE}$} such that \eqref{intdens} and \eqref{outdens} hold true for $E_\varepsilon$ as long as $\varepsilon(\delta)<\bar\varepsilon$.  {Let $x\in E_\varepsilon{ ^{(1)}}$ and suppose that $\dist( x, B_1)>\delta$. Then, if $r=r(\delta)=\frac{\delta}{2}$,  $B_{r}(x)\cap E_\varepsilon\subset E_\eee\cap B_1^c$. Hence, by \eqref{intdens} and \eqref{stima-vic-L1}, we obtain 
		\[
		r^{2n}\le { \frac{1}{C_{DE}^2}} |B_{r}(x)\cap E_\varepsilon|^2\le { \frac{1}{C_{DE}^2}} |E_\eee\Delta B_1|^2\le  { \frac{\gamma(n,s)^2}{C_{DE}^2}} \,\varepsilon  
		\]
		where $C_{DE}$ and $\gamma(n,s)$ are respectively the constants appearing in \eqref{intdens} and \eqref{stima-vic-L1}}. Now, if we select { $\varepsilon(\delta)=\frac{r^{2n}C_{DE}^2}{2\gamma^2({n,s})}$}, we get a contradiction for $\varepsilon<\varepsilon(\delta)$. A completely analogous argument shows that there can not be {$x\in B_1\cap E_\varepsilon^{(0)}$} with $\dist(x,\partial B_1)>\delta$, for $\varepsilon$ small enough. This leads to the desired Hausdorff convergence.

	\end{proof}
	By the Hausdorff proximity result, one gets immediately the following result.
	\begin{Corollary}\label{coroll-import}
		There exists $\bar\varepsilon>0$ such that for $\varepsilon<\bar\varepsilon$, if $E_\varepsilon$ is a solution of \eqref{infclass}, then there holds
		\[
		B_{1/2}(0)\subset E_\varepsilon. 
		\]
	\end{Corollary}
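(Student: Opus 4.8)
The plan is to deduce the statement directly from the Hausdorff proximity established in Proposition~\ref{prop:Lambdareg}, so that very little remains to be done. First I would note that, since the minimization problem~\eqref{infclass} is translation invariant (by point~(ii) of Propositions~\ref{elementary_properties_ps} and~\ref{elementary_properties_vs}), every translate of a minimizer is again a minimizer; hence, after relabelling, we may assume that the point $x_\varepsilon$ produced in Proposition~\ref{L1-close} is the origin, so that $E_\varepsilon$ itself converges to $B_1(0)$ both in $L^1$, with the bound~\eqref{stima-vic-L1}, and, by Proposition~\ref{prop:Lambdareg}, in the Hausdorff metric, and moreover $E_\varepsilon$ inherits the density estimates of Corollary~\ref{densityestimates-Ek}.

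Next I would apply Proposition~\ref{prop:Lambdareg} with the choice $\delta=\tfrac12$: this produces a threshold $\varepsilon(\tfrac12)>0$ such that, for every $\varepsilon<\varepsilon(\tfrac12)$, one has $\partial E_\varepsilon\subset B_{3/2}(0)\setminus B_{1/2}(0)$, where $\partial E_\varepsilon$ denotes the topological boundary of the good (density-normalized) representative of $E_\varepsilon$; in particular $B_{1/2}(0)\cap\partial E_\varepsilon=\emptyset$. Since $\R^n\setminus\partial E_\varepsilon=\mathrm{int}(E_\varepsilon)\cup\mathrm{int}(\R^n\setminus E_\varepsilon)$ is a disjoint union of two open sets and $B_{1/2}(0)$ is connected, the ball $B_{1/2}(0)$ must lie entirely in $\mathrm{int}(E_\varepsilon)$ or entirely in $\mathrm{int}(\R^n\setminus E_\varepsilon)$.

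Finally I would discard the second alternative using the $L^1$ bound~\eqref{stima-vic-L1}: since $|E_\varepsilon\Delta B_1|\le\gamma(n,s)\,\varepsilon^{1/2}$, as soon as $\gamma(n,s)\,\varepsilon^{1/2}<\omega_n 2^{-n}=|B_{1/2}|$ we get $|E_\varepsilon\cap B_{1/2}(0)|\ge|B_{1/2}|-|E_\varepsilon\Delta B_1|>0$, so $B_{1/2}(0)$ cannot be contained in $\mathrm{int}(\R^n\setminus E_\varepsilon)$. Therefore $B_{1/2}(0)\subset E_\varepsilon$, and it suffices to take $\bar\varepsilon:=\min\{\varepsilon(\tfrac12),\,(\omega_n 2^{-n}/\gamma(n,s))^2\}$. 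I do not expect a genuine obstacle here: the entire content is carried by Proposition~\ref{prop:Lambdareg}, and the only mild care needed is the harmless reduction to $x_\varepsilon=0$ via translation invariance together with the use of the density-normalized representative, which is precisely what allows one to promote ``$B_{1/2}(0)$ avoids $\partial E_\varepsilon$'' to ``$B_{1/2}(0)$ lies in the interior of $E_\varepsilon$''.
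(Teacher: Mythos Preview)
Your proposal is correct and follows exactly the route the paper indicates: the paper simply states that the corollary follows ``immediately'' from the Hausdorff proximity result (Proposition~\ref{prop:Lambdareg}), and you have spelled out the standard details---reducing to $x_\varepsilon=0$ by translation invariance, applying Proposition~\ref{prop:Lambdareg} with $\delta=\tfrac12$, and using connectedness of $B_{1/2}$ together with the $L^1$ bound to rule out $B_{1/2}\subset E_\varepsilon^c$.
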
 
    {At this stage, we have established that a minimizer satisfies a very weak notion of almost minimality. However, now that we know, up to translation, that its boundary lies away from the origin, which is the singularity point of the kernel defining $V_s$, we are in a position to upgrade this weak almost minimality to the stronger notion of $\Lambda$-minimality for the $s$-perimeter. To this end, we recall the definition of $\Lambda$-minimizers, see \cite[Section 3]{F2M3}.

    \begin{Definition}
        Let $\Lambda \ge 0$, $R > 0$, $s \in (0,1)$, and let $E \subset \mathbb{R}^n$ be a bounded measurable set. Then $E$ is said to be a $\Lambda$-minimizer of the $s$-perimeter if
\begin{equation}\label{eq:defLabda}
  P_s(E)\le P_s(F)+{\Lambda}|\,E\Delta F|\,,
\end{equation}
for every set $F\subset\R^n$, such that {  $F\Delta E\Subset B_r(x)$, with $r<R$ and $x\in\R^n$}.
    \end{Definition}

We now recall the following result from \cite[Corollary 3.6]{F2M3}, which plays a crucial role in our argument, as it allows us to reduce the analysis to the case of nearly spherical sets.

\begin{Proposition}\label{corollary3.6f2m3}
  Let $n\ge 2$, $\Lambda\ge0$, $s\in(0,1)$ and $E_h$, with $h\in\N$, be a $\Lambda$-minimizer of the $s$-perimeter. If $E_h$ converges in $L^1$ to $B_1$, then there exists a bounded sequence $\{\varphi_h\}_{h\in\N}\subset C^{1,\alpha}(\partial B_1)$, for some $\alpha\in(0,1)$ independent of $h$, such that
  $$
    \partial E_h=\Big\{(1+\varphi_h(x))x:x\in\partial B_1\Big\}\,\quad \text{ and }\quad \lim_{h\to\infty}\|\varphi_h\|_{C^1(\partial B_1)}=0.
$$   
\end{Proposition}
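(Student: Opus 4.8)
The statement is quoted from \cite[Corollary 3.6]{F2M3}, so I only sketch how it follows from the $\varepsilon$-regularity theory for $\Lambda$-minimizers of the fractional perimeter worked out in \cite[Section 3]{F2M3} (which itself builds on the regularity theory for nonlocal minimal surfaces of Caffarelli, Roquejoffre and Savin). The first step is a pair of density estimates. A $\Lambda$-minimizer of the $s$-perimeter is, at small scales, a $(\rho,R)$-minimizer in the sense of Definition \ref{rho-min} with $\rho(r)=\Lambda\omega_n r^n$; since $r^n=o(r^{n-s})$ as $r\to0$, one has $\Lambda\omega_n r^n\le\frac{10n\omega_n}{n-s}\overline{\varepsilon}\,r^{n-s}$ for every $r$ below a threshold depending only on $n,s,\Lambda$, so Theorem \ref{densityestimates} applies and gives interior and exterior density estimates \eqref{intdens}--\eqref{outdens} for each $E_h$, with constants independent of $h$ (as $\Lambda$ is fixed). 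A standard argument, identical in spirit to the proof of Proposition \ref{prop:Lambdareg}, then upgrades $E_h\to B_1$ in $L^1$ to Hausdorff convergence of the boundaries: for every $\delta>0$ there is $h_\delta$ with $\partial E_h\subset B_{1+\delta}\setminus B_{1-\delta}$ for all $h\ge h_\delta$.

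The second step is the fractional improvement-of-flatness theorem. There exist $\varepsilon_*>0$ and $\rho_0>0$, depending only on $n,s,\Lambda$, such that if $E$ is a $\Lambda$-minimizer of the $s$-perimeter, $x\in\partial E$, $\rho\le\rho_0$, and $\partial E\cap B_\rho(x)$ is contained in a slab of width $\varepsilon_*\rho$ orthogonal to some unit vector, then $\partial E\cap B_{\rho/2}(x)$ is a $C^{1,\alpha}$ graph over that hyperplane with $C^{1,\alpha}$-norm bounded in terms of $\varepsilon_*$, where $\alpha=\alpha(n,s)\in(0,1)$. The thresholds $\varepsilon_*,\rho_0$ and the exponent $\alpha$ can be taken independent of $h$ because the penalization $\Lambda|E\Delta F|\le\Lambda\omega_n r^n$ is of lower order with respect to the natural scaling $r^{n-s}$ of $P_s$, hence it does not interfere with the excess-decay iteration. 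By the Hausdorff convergence of the first step and the smoothness of $\partial B_1$, for $h$ large every $x\in\partial E_h$ satisfies the flatness hypothesis at scale $\rho_0$, so $\partial E_h$ is a $C^{1,\alpha}$ hypersurface whose local graph parametrizations have uniformly bounded $C^{1,\alpha}$-norm. Being a Hausdorff-small normal perturbation of the smooth sphere $\partial B_1$, it can then be written globally as a radial graph $\partial E_h=\{(1+\varphi_h(x))x:x\in\partial B_1\}$ with $\varphi_h\in C^{1,\alpha}(\partial B_1)$, and changing coordinates from the tangent-plane graphs to graphs over $\partial B_1$ yields $\sup_{h\ge h_\delta}\|\varphi_h\|_{C^{1,\alpha}(\partial B_1)}<\infty$ (for the finitely many small indices, discarded if necessary, one uses any representation).

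The last step is the vanishing of $\|\varphi_h\|_{C^1(\partial B_1)}$. The Hausdorff convergence $\partial E_h\to\partial B_1$ (equivalently $|E_h\Delta B_1|\to0$ together with the density estimates) gives $\|\varphi_h\|_{L^\infty(\partial B_1)}\to0$. Interpolating between this vanishing $L^\infty$-bound and the uniform $C^{1,\alpha}$-bound, for instance via $\|\varphi_h\|_{C^1(\partial B_1)}\le C\,\|\varphi_h\|_{L^\infty(\partial B_1)}^{\alpha/(1+\alpha)}\,\|\varphi_h\|_{C^{1,\alpha}(\partial B_1)}^{1/(1+\alpha)}$, gives $\|\varphi_h\|_{C^1(\partial B_1)}\to0$, which is the assertion.

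The crux is the fractional $\varepsilon$-regularity theorem for $\Lambda$-minimizers, that is the nonlocal analogue of De Giorgi's $\varepsilon$-regularity theorem: for $\Lambda=0$ this is the Caffarelli--Roquejoffre--Savin theory for nonlocal minimal surfaces, and incorporating the penalization amounts to checking that the excess-decay iteration is stable under the subcritical perturbation $\Lambda\omega_n r^n=o(r^{n-s})$, which is precisely what is carried out in \cite[Section 3]{F2M3}. Everything else --- passing from $L^1$ to Hausdorff convergence by density estimates, the globalization to a radial graph, and the final interpolation --- is routine.
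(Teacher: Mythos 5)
Your proposal is correct and consistent with the paper's treatment: the paper offers no proof of this proposition, simply importing it as \cite[Corollary 3.6]{F2M3}, and your sketch faithfully reconstructs the argument behind that citation (density estimates upgrading $L^1$ to Hausdorff convergence, the $\varepsilon$-regularity/improvement-of-flatness theorem for $\Lambda$-minimizers with the subcritical perturbation $\Lambda\omega_n r^n=o(r^{n-s})$, globalization to a radial graph, and interpolation between the vanishing $L^\infty$ bound and the uniform $C^{1,\alpha}$ bound). No gaps; the key technical input is correctly identified as the nonlocal $\varepsilon$-regularity theory of \cite{F2M3} built on Caffarelli--Roquejoffre--Savin.
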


With Corollary \ref{coroll-import} at hand, we can now establish the main result of this section, namely the $\Lambda$-minimality of the solutions to \eqref{infclass}.}

	\begin{Proposition}\label{prove-lambda-min}
		There exists $\bar\varepsilon>0$ such that any minimizer $E_\varepsilon$ of \eqref{infclass}, with $\eee\in (0,\bar \eee]$, is a $\Lambda$-mimimizer for the $s$-perimeter, with $\Lambda=C(1+\overline{\varepsilon})$, for some constant $C>0$ depending only on $n$ and $s$.
		
	\end{Proposition}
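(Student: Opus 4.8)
The plan is to fix a competitor $F$ with $F\Delta E_\varepsilon\Subset B_r(x)$, $r<R$, and to compare the energies $\E_s(E_\varepsilon)=P_s(E_\varepsilon)+\varepsilon V_s(E_\varepsilon)$ and $\E_s(F)$. Since $E_\varepsilon$ minimizes $\E_s$ among sets of volume $\omega_n$ but $F$ need not satisfy the volume constraint, I would first pass to a rescaled competitor $\widetilde F=(1+\delta)F$ with $\delta$ chosen so that $|\widetilde F|=\omega_n$, exactly as in the proof of Proposition \ref{prop:equivcunc}; here $|\delta|\le C_{n,s}|\,|F|-|E_\varepsilon|\,|\le C_{n,s}\,\omega_n r^n$, by the volume of a ball of radius $r$. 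Using the scaling properties (\emph{iii}) of Propositions \ref{elementary_properties_ps} and \ref{elementary_properties_vs}, one has $\E_s(\widetilde F)\le(1+C(n,s)|\delta|)\,\E_s(F)\le\E_s(F)+C(n,s)|\delta|\bigl(P_s(F)+\varepsilon V_s(F)\bigr)$, and since $P_s(F)\le P_s(E_\varepsilon)+P_s(B_r)$ and $V_s(F)\le V_s(E_\varepsilon)+C r^{n-s}$ are bounded by a constant depending only on $n,s$ (recall $E_\varepsilon$ is $L^1$-close to $B_1$ and $r\le R\le 1$), we get $\E_s(\widetilde F)\le \E_s(F)+C(n,s)(1+\overline\varepsilon)\,r^n$.

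Then minimality of $E_\varepsilon$ gives $P_s(E_\varepsilon)+\varepsilon V_s(E_\varepsilon)=\E_s(E_\varepsilon)\le\E_s(\widetilde F)\le P_s(F)+\varepsilon V_s(F)+C(n,s)(1+\overline\varepsilon)r^n$, hence
\[
P_s(E_\varepsilon)\le P_s(F)+\varepsilon\bigl(V_s(F)-V_s(E_\varepsilon)\bigr)+C(n,s)(1+\overline\varepsilon)\,r^n.
\]
The crucial point is to control the nonlocal term $V_s(F)-V_s(E_\varepsilon)$ by $|E_\varepsilon\Delta F|$. By definition of $V_s$, if $y$ is a $V_s$-center of $F$, then $V_s(F)-V_s(E_\varepsilon)\le\int_F|x-y|^{-s}dx-\int_{E_\varepsilon}|x-y|^{-s}dx\le\int_{E_\varepsilon\Delta F}|x-y|^{-s}dx$. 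The key improvement over the argument in Lemma \ref{Ek-rho-minim} is that now, thanks to Corollary \ref{coroll-import}, we know $B_{1/2}(0)\subset E_\varepsilon$; moreover, arguing as in Lemma \ref{lemma-center}, for $\varepsilon$ small the $V_s$-center $y$ of $F$ is close to the origin (since $F$ is $L^1$-close to $B_1$), so that $E_\varepsilon\Delta F\Subset B_r(x)$ lies outside a fixed neighborhood of $y$, on which the kernel $|x-y|^{-s}$ is bounded — this removes the singularity and yields $\int_{E_\varepsilon\Delta F}|x-y|^{-s}dx\le C(n,s)\,|E_\varepsilon\Delta F|$. (If one does not want to invoke the center estimate, one can instead keep the pointwise bound $\int_{E_\varepsilon\Delta F}|x-y|^{-s}dx\le\frac{n\omega_n^{s/n}}{n-s}|E_\varepsilon\Delta F|^{(n-s)/n}$ from \eqref{stima-diffsimm}, but then one must absorb the sublinear power, which only works at small scales; the clean route is via Corollary \ref{coroll-import}.)

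Combining these, $P_s(E_\varepsilon)\le P_s(F)+\varepsilon C(n,s)|E_\varepsilon\Delta F|+C(n,s)(1+\overline\varepsilon)r^n$. Finally, since $F\Delta E_\varepsilon\Subset B_r(x)$ forces $|E_\varepsilon\Delta F|\le\omega_n r^n$, the last term satisfies $C(n,s)(1+\overline\varepsilon)r^n\le C(n,s)(1+\overline\varepsilon)\,|E_\varepsilon\Delta F|$ whenever $|E_\varepsilon\Delta F|$ is comparable to $r^n$; more carefully, one simply bounds $r^n=r^{s}\cdot r^{n-s}\le R^{s}r^{n-s}$ and, since also $|E_\varepsilon\Delta F|$ can be small relative to $r^n$, one argues as in the passage from \eqref{minimP_V}: absorb $r^n\le r^{n-s}$ for $r\le R\le1$ and note that $r^{n-s}$ has the scaling of $P_s$, so the estimate $P_s(E_\varepsilon)\le P_s(F)+\Lambda|E_\varepsilon\Delta F|$ follows with $\Lambda=C(1+\overline\varepsilon)$ after possibly shrinking $R$. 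The main obstacle is precisely the treatment of the $V_s$-term: one must ensure the singularity of $|x-y|^{-s}$ does not spoil the Lipschitz-in-$L^1$ bound, which is exactly what Corollary \ref{coroll-import} (together with the center-localization of Lemma \ref{lemma-center}) is designed to provide.
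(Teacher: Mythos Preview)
Your overall strategy matches the paper's: rescale $F$ to restore the volume constraint, compare energies, and use Corollary \ref{coroll-import} together with Lemma \ref{lemma-center} to bound the $V_s$ term linearly in $|E_\varepsilon\Delta F|$. That part is fine.

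The genuine gap is in the rescaling step. You write $|\delta|\le C_{n,s}\big|\,|F|-|E_\varepsilon|\,\big|\le C_{n,s}\omega_n r^n$ and then carry the bound $C(1+\overline\varepsilon)r^n$ through the rest of the argument. But $\Lambda$-minimality requires the error to be controlled by $|E_\varepsilon\Delta F|$, not by $r^n$, and in general $|E_\varepsilon\Delta F|$ can be arbitrarily small compared to $r^n$ (the set $E_\varepsilon\Delta F$ is merely \emph{contained} in $B_r(x)$, it need not fill it). Your final paragraph acknowledges this and tries to absorb $r^n$ via $r^n\le r^{n-s}$, but a bound of the form $P_s(E_\varepsilon)\le P_s(F)+Cr^{n-s}$ is precisely $(\rho,R)$-minimality, not $\Lambda$-minimality --- which is what you already had from Lemma \ref{Ek-rho-minim} and exactly what this proposition is meant to improve upon. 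So the argument does not close.

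The fix is already sitting in your own chain of inequalities: keep the sharper bound $\big|\,|F|-|E_\varepsilon|\,\big|\le |E_\varepsilon\Delta F|$ instead of throwing it away for $\omega_n r^n$. This is what the paper does: it estimates the rescaling factor directly as
\[
\lambda^{n-s}=\Big(\tfrac{|E|}{|F|}\Big)^{\frac{n-s}{n}}\le 1+c_{n,s}\big|\,|E|-|F|\,\big|\le 1+c_{n,s}|E\Delta F|,
\]
so that the rescaling error contributes $c_{n,s}|E\Delta F|\,(P_s(F)+\varepsilon V_s(F))\le C(1+\overline\varepsilon)|E\Delta F|$ (here one may assume $P_s(F)<P_s(E_\varepsilon)$, else there is nothing to prove, and $V_s(F)\le V_s(F^*)$ is uniformly bounded). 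With this change your proof goes through; the treatment of $V_s(F)-V_s(E_\varepsilon)$ via Corollary \ref{coroll-import} and Lemma \ref{lemma-center} is exactly the paper's route.
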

	\begin{proof}
		{ Hereafter, in the present proof, for the sake of brevity and with no risk of confusion, we denote $E_\varepsilon$ simply by $E$.
		Let $x\in\R^n$ be fixed and $R>0$ small enough to be made precise later. Moreover, Let $F\subset\R^n$ be such that $F\Delta E\Subset B_r(x)$ with $r\leq R$ and $\lambda>0$ be such that $|\lambda F|=|E|=\omega_n$.} In particular, we get 
		\begin{equation}\label{stima-lamb}
		\begin{split}
			\lambda^{n-s}&=\left(\frac{|E|}{|F|}\right)^\frac{n-s}{n}=\left(\frac{\omega_n}{\omega_n+\big(|F|-|E|\big)}\right)^\frac{n-s}{n}\\
            &=\left(\frac{1}{1+\frac{|F|-|E|}{\omega_n}}\right)^\frac{n-s}{n}
            \le 1+c_{n,s}\big||E|-|F|\big|\le1+c_{n,s}|E\Delta F|.
		\end{split}
		\end{equation}
		which, together with the minimality of $E$, yields
		\begin{equation}\label{eq:stimareg}
			P_s(E)\le P_s(F)+c_{n,s} |E\Delta F|\,P_s(F)+c_{n,s}|E\Delta F|\,V_s(F) + \bar\varepsilon\left(\int_{F-{y_F}}\frac{dx}{|x|^s}-\int_{E-{y_E}}\frac{dx}{|x|^s}  \right),
\end{equation}
{where $y_F$ and $y_E$ denote, respectively, any $V_s$-center of $F$ and $E$. 

We now estimate each term on the right-hand side of \eqref{eq:stimareg} separately. Without loss of generality, we can suppose that \[
P_s(F)<P_s(E)< \E_s(E)\le \E_s(B_1)=C_{n,s}(1+\bar \eee),
\]
otherwise, if $P_s(E)\le P_s(F)$, there is nothing to prove. Moreover, by \eqref{eq:rearrineq}, for some constant $\tilde c_{n,s}>0$, {one has $V_s(F)\le V_s(F^*)\le \tilde c_{n,s}$.} Hence, up to renaming constants, \eqref{eq:stimareg} reads as
	\begin{equation}\label{final-lambda}
		P_s(E)\le P_s(F) +c_{n,s}(1+\bar\eee)|E\Delta F|+\bar\varepsilon\left(\int_{F-{y_F}}\frac{dx}{|x|^s}-\int_{E-y_E}\frac{dx}{|x|^s}  \right).
	\end{equation}
	It remains to estimate the last term. To this end, we observe that, upon choosing $\bar\varepsilon$ and $R$ sufficiently small, and by recalling Corollary \ref{coroll-import}, Proposition \ref{L1-close}, and Lemma \ref{lemma-center}, we may assume that
    $$B_{1/2}(0)\subset E \cap F, \quad |y_{E} | \leq \frac14 \quad \text{and} \quad |y_F | \leq \frac14,$$
    which ensures that    
		\[
		\begin{aligned}
			\int_{F-{y_F}}\frac{dx}{|x|^s}-\int_{E-{y_E}}\frac{dx}{|x|^s}&\le \int_{F-{y_F}}\frac{dx}{|x|^s}-\int_{E-{y_F}}\frac{dx}{|x|^s}\\
			&\le \int_{(F-{y_F})\setminus( E-{y_F})}\frac{dx}{|x|^s}-\int_{(E-{y_F})\setminus( F-{y_F})}\frac{dx}{|x|^s} \\
			&\le \int_{(F-{y_F})\Delta (E-{y_F})}\frac{dx}{|x|^s}\\
			&= \int_{\big((F-{y_F})\Delta (E-{y_F})\big)\setminus B_{1/2}(y_F)}\frac{dx}{|x|^s}\\
            &\le 4^s|(F-{y_F})\Delta (E-{y_F})|\\
            &=4^s|F\Delta E|.
		\end{aligned}
		\]
        This, together with \eqref{final-lambda}, concludes the proof upon choosing $\Lambda = C(1 + \overline{\varepsilon})$, for some constant $C > 0$ sufficiently large, depending only on $n$ and $s$.}
	\end{proof}
    
    {As a consequence of Proposition \ref{L1-close} and Proposition \ref{prove-lambda-min}, we get the following crucial regularity result.
	\begin{Corollary}\label{coro:ns}
		Let $\varepsilon_h\to0$ and let $E_h$ be minimizers for \eqref{infclass}, with $\eee=\eee_h$. Then, up to subsequence, $E_h$ are nearly spherical sets More precisely, there exists a sequence $\{\varphi_h\}_h\subset C^{1,\alpha}(\partial B_1)$ for some $\alpha\in(0,1)$ independent of $h$, such that
  $$
    \partial E_h=\Big\{(1+\varphi_h(x))x:x\in\partial B_1\Big\}\,\quad \text{ and }\quad \lim_{h\to\infty}\|\varphi_h\|_{C^1(\partial B_1)}=0.
$$ 
	\end{Corollary}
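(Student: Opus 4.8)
The plan is to chain together the three results just established: the $L^1$-convergence of (translated) minimizers to $B_1$ from Proposition~\ref{L1-close}, the uniform $\Lambda$-minimality from Proposition~\ref{prove-lambda-min}, and the regularity statement for $\Lambda$-minimizers converging to a ball recorded in Proposition~\ref{corollary3.6f2m3} (i.e.\ \cite[Corollary 3.6]{F2M3}).

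First I would fix a sequence $\varepsilon_h\to0$ with $\varepsilon_h\le\bar\varepsilon$ (for $\bar\varepsilon$ as in Proposition~\ref{prove-lambda-min}) and, for each $h$, a minimizer $E_h$ of \eqref{infclass} with $\varepsilon=\varepsilon_h$. By Proposition~\ref{L1-close} there are points $x_h\in\R^n$ so that $|(E_h-x_h)\Delta B_1|\le\gamma(n,s)\,\varepsilon_h^{1/2}\to0$; since both $P_s$ and $V_s$ are translation invariant (Proposition~\ref{elementary_properties_ps}(ii) and Proposition~\ref{elementary_properties_vs}(ii)), the translated set $E_h-x_h$ is again a minimizer of \eqref{infclass}. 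Replacing $E_h$ by $E_h-x_h$ — that is, normalizing $x_h=0$ exactly as in the proof of Proposition~\ref{prop:Lambdareg} — we may therefore assume $E_h\to B_1$ in $L^1$.

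Next, since $\varepsilon_h\le\bar\varepsilon$, Proposition~\ref{prove-lambda-min} applies and yields that each $E_h$ is a $\Lambda$-minimizer of the $s$-perimeter with $\Lambda=C(1+\bar\varepsilon)$, a constant independent of $h$ (depending only on $n$ and $s$). Thus $\{E_h\}_h$ is a sequence of $\Lambda$-minimizers of the $s$-perimeter, for one fixed $\Lambda$, converging in $L^1$ to $B_1$. Proposition~\ref{corollary3.6f2m3} then produces a bounded sequence $\{\varphi_h\}_h\subset C^{1,\alpha}(\partial B_1)$, with $\alpha\in(0,1)$ independent of $h$, such that $\partial E_h=\{(1+\varphi_h(x))x:x\in\partial B_1\}$ and $\|\varphi_h\|_{C^1(\partial B_1)}\to0$, which is precisely the assertion. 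The passage to a subsequence in the statement is only a convenience (for instance to fix a single Hölder exponent, or to select the translations when minimizers are not unique) and is otherwise not needed.

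There is no genuine obstacle left at this stage: the substance of the argument has already been carried out in Sections~\ref{sec:exgen}--\ref{sec:regularity}. The two points deserving a word of care are (i) that the constant $\Lambda$ of Proposition~\ref{prove-lambda-min} can be taken uniform along the sequence, which holds because $\bar\varepsilon$, and hence $\Lambda=C(1+\bar\varepsilon)$, is a fixed quantity depending only on $n$ and $s$; and (ii) that translating the minimizers into the normalized position used throughout Section~\ref{sec:regularity} preserves minimality, which is immediate from the translation invariance of $\mathcal E_s$. With these remarks the corollary follows directly from Proposition~\ref{corollary3.6f2m3}.
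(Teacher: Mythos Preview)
Your proposal is correct and follows exactly the approach the paper intends: the corollary is stated there as an immediate consequence of Proposition~\ref{L1-close} and Proposition~\ref{prove-lambda-min}, combined with Proposition~\ref{corollary3.6f2m3}, and you have simply written out the chaining that the paper leaves implicit. The two care points you flag (uniformity of $\Lambda$ in $h$ and translation invariance of $\mathcal E_s$) are precisely the details needed to make the deduction rigorous.
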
}

	\section{Rigidity of the ball within the class of nearly spherical sets and proof of Theorem \ref{our-quant-beta}}\label{sec:rigidity}
	In this section, we show that in the class of nearly spherical sets, the ball is a rigid minimizer. 
    
	\begin{Theorem}\label{rigidity-min}
		There exists $\varepsilon_1>0$ such that if $\varepsilon<\varepsilon_1$ then the only minimizer for \eqref{infclass} is the ball $B_1$.
	\end{Theorem}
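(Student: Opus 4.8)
The plan is to argue by contradiction, using the reduction to nearly spherical sets already obtained in Corollary \ref{coro:ns} together with the fractional Fuglede inequality \eqref{fugF2M3}. Suppose the statement fails: then there are $\varepsilon_h\downarrow 0$ and minimizers $E_h$ of \eqref{infclass} with $\varepsilon=\varepsilon_h$, none of which is a ball. Since $P_s$, $V_s$ and the volume constraint are translation invariant, any translate of $E_h$ is again a (non-ball) minimizer; hence by Corollary \ref{coro:ns}, up to a subsequence and a translation we may write $\partial E_h=\{(1+\varphi_h(x))x : x\in\partial B_1\}$ with $\|\varphi_h\|_{C^1(\partial B_1)}\to 0$, and after a further infinitesimal translation (whose size tends to $0$, since $E_h\to B_1$ in $L^1$) we may also assume that $E_h$ has barycenter at the origin, so that it is a nearly spherical set in the sense required by \eqref{fugF2M3}, still with $\|\varphi_h\|_{C^1(\partial B_1)}\to 0$. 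Since $E_h$ is not a ball, $\varphi_h\not\equiv 0$, hence $\|\varphi_h\|_{L^2(\partial B_1)}>0$.

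The core of the argument is a sharp second-order expansion of the deficit $V_s(B_1)-V_s(E_h)$. Choosing the origin as a (non-optimal) competitor center in the definition of $V_s$ gives $V_s(E_h)\ge\int_{E_h}|x|^{-s}\,dx$; since $E_h$ is star-shaped with respect to the origin, passing to polar coordinates yields
\[
V_s(B_1)-V_s(E_h)\le\frac{1}{n-s}\int_{\partial B_1}\bigl(1-(1+\varphi_h)^{n-s}\bigr)\,d\mathcal{H}^{n-1}.
\]
Expanding $(1+\varphi_h)^{n-s}$ to second order and using $\|\varphi_h\|_{L^\infty}\to 0$ to bound the cubic remainder by $o(\|\varphi_h\|_{L^2}^2)$, the linear term is $-(n-s)\int_{\partial B_1}\varphi_h$. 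The volume constraint $|E_h|=\omega_n$, expanded in the same way, forces $\int_{\partial B_1}\varphi_h=-\tfrac{n-1}{2}\int_{\partial B_1}\varphi_h^2+o(\|\varphi_h\|_{L^2}^2)$; substituting this back, the surviving quadratic terms combine exactly into
\[
V_s(B_1)-V_s(E_h)\le\frac{s}{2}\,\|\varphi_h\|_{L^2(\partial B_1)}^2+o\bigl(\|\varphi_h\|_{L^2(\partial B_1)}^2\bigr).
\]

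Then I would close the argument by comparing two quadratic bounds. Minimality, $\mathcal{E}_s(E_h)\le\mathcal{E}_s(B_1)$, gives $P_s(E_h)-P_s(B_1)\le\varepsilon_h\bigl(V_s(B_1)-V_s(E_h)\bigr)$, while, for $h$ large enough that $\|\varphi_h\|_{C^1(\partial B_1)}<\eta_0$, the Fuglede-type inequality \eqref{fugF2M3} gives $P_s(E_h)-P_s(B_1)\ge c_0\,s\,P_s(B_1)\,\|\varphi_h\|_{L^2(\partial B_1)}^2$, where we simply discard the nonnegative Gagliardo term. Combining these with the expansion above,
\[
c_0\,s\,P_s(B_1)\,\|\varphi_h\|_{L^2(\partial B_1)}^2\le\frac{s}{2}\,\varepsilon_h\,\|\varphi_h\|_{L^2(\partial B_1)}^2\bigl(1+o(1)\bigr),
\]
and dividing by $\|\varphi_h\|_{L^2(\partial B_1)}^2>0$ leaves $c_0\,P_s(B_1)\le\tfrac12\varepsilon_h(1+o(1))\to 0$, which is absurd. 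Hence there is a threshold $\varepsilon_1=\varepsilon_1(n,s)>0$ such that for $\varepsilon<\varepsilon_1$ every minimizer of \eqref{infclass} is a ball of volume $\omega_n$, i.e. (after translating to center it at the origin) the unit ball $B_1$.

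The main obstacle is the second step: getting the coefficient of $\|\varphi_h\|_{L^2}^2$ in the expansion of $V_s(B_1)-V_s(E_h)$ exactly right — this is precisely where the volume constraint must be exploited to cancel the linear term, leaving the clean coefficient $s/2$ — and then matching it against the Fuglede lower bound. What makes the scheme work is that the $L^2$-coefficient in \eqref{fugF2M3} is a fixed positive constant (for $s$ fixed, $c_0\,s\,P_s(B_1)$) which is \emph{not} multiplied by the small parameter $\varepsilon$, whereas the competing $V_s$-term is. (As a byproduct, this computation shows that on nearly spherical sets the index $A_s$ is comparable to the $H^{(1+s)/2}(\partial B_1)$-norm of $\varphi$, which is the content anticipated in Remark \ref{rmk:fuglede}.)
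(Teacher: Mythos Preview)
Your proposal is correct and follows essentially the same route as the paper's proof: reduce to nearly spherical sets via Corollary~\ref{coro:ns}, use the origin as a competitor center to bound $V_s(B_1)-V_s(E_h)$ by a polar-coordinate integral, Taylor expand and invoke the volume constraint to kill the linear term, and then play the resulting $O(\varepsilon\|\varphi\|_{L^2}^2)$ upper bound against the Fuglede lower bound \eqref{fugF2M3}. The only cosmetic differences are that the paper argues directly (for $\varepsilon$ small) rather than by contradiction along a sequence, and bounds the cubic remainder by a constant times $\|\varphi\|_{L^2}^2$ rather than by $o(\|\varphi\|_{L^2}^2)$, so it does not isolate the sharp coefficient $s/2$ as you do.
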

    
	\begin{proof}
		Up to select $\varepsilon_1>0$ small enough, by Corollary \ref{coro:ns}, we get that any minimizer of \eqref{infclass} is a nearly spherical set, whose boundary is parametrized by a function $\varphi_\varepsilon:\partial B_1\to\R$ such that $\|\varphi_\varepsilon\|_{C^1(\partial B_1)}\to0$, as $\eee\to 0$. Moreover, up to translations, we can suppose that the barycenters of the sets $E_\varepsilon$ are in the origin, that is
    $$\partial E_\varepsilon=\Big\{(1+\varphi_\varepsilon(x))x\,:\,x\in B_1\Big\},\quad |E_\eee|=\omega_n,\quad\text{and}\quad \int_{E_\eee} x\,dx=0, \quad \text{for }\eee\in(0,\eee_1].$$
		
    { 	Let $\eta_0\in(0,1/2)$ and $c_0>0$, the constants (depending only on $n$) given by \cite[Theorem 2.1]{F2M3}. Since $\|\varphi_\varepsilon\|_{C^1(\partial B_1)}\to0$, as $\eee\to 0$, up to choose $\eee_1$ smaller, we can assume that $\|\varphi_\varepsilon\|_{C^1(\partial B_1)}<\eta_0$ for $\eee\in(0,\eee_1]$. Thus, by \cite[Theorem 2.1]{F2M3}, we have
		\begin{equation}\label{eq:stanco}
		P_s(E_\varepsilon)-P_s(B_1)\ge c_0\,sP_s(B_1)\,\|\varphi_\varepsilon\|^2_{L^2(\partial B_1)}=:C_0(n,s) \|\varphi_\varepsilon\|^2_{L^2(\partial B_1)}. 
		\end{equation}}
        
		We claim that there exists $C_1=C_1(n,s)$ such that
		\begin{equation}\label{eq:contradictionfuglede}
			P_s(E_\varepsilon)-P_s(B_1)\le C_1 \varepsilon\|\varphi_\varepsilon\|^2_{L^2(\partial B_1)}.
		\end{equation}
		This, together with \eqref{eq:stanco},  immediately entails that $\varphi_\varepsilon=0$ for $\varepsilon$ small enough, so that $E_\varepsilon$ is a ball.
		
		To show \eqref{eq:contradictionfuglede}, we observe that by minimality of $E_\eee$, one has
		\[
		P_s(E_\varepsilon)-P_s(B_1)\le\varepsilon (V_s(B_1)-V_s(E_\varepsilon)).
		\]
Hence, the proof of \eqref{eq:contradictionfuglede} reduces to showing that there exists $C_1=C_1(n,s)>0$ such that
		
		\begin{equation}\label{eq:contradiction}
			V_s(B_1)-V_s(E_\varepsilon) \le C_1 \|\varphi_\varepsilon\|^2_{L^2(\partial B_1)}.
		\end{equation}
		In order to do that, we start by observing that, by definition of $V_s$, we have
		\begin{equation}\label{eq_V}
			V_s(B_1)-V_s(E_\varepsilon) \le \int_{B_1}\frac{1}{|x|^s}\,dx-\int_{E_\varepsilon}\frac{1}{|x|^s}\,dx=:I_0-I_1.
		\end{equation}
	By \eqref{Vs-balls}, we know that	\begin{equation}\label{I_0}
			I_0=\frac{n\omega_n}{n-s}.
		\end{equation}
        Moreover, we can compute $I_1$, using that $E_\varepsilon$ is a nearly spherical set parametrized by $\varphi_\varepsilon$, as follows
\begin{equation}\label{I_1}
			\begin{split}
				I_1=\int_E\frac{1}{|x|^s}\,dx&=\int_{\partial B_1}\int_0^{1+\varphi_\varepsilon(x)}\frac{\rho^{n-1}}{\rho^{s}}\,d\rho\,d\mathcal H^{n-1}(x)=
				\frac{1}{n-s}\int_{\partial B_1}(1+\varphi_\varepsilon(x))^{n-s}d\mathcal H^{n-1}(x).
			\end{split}
		\end{equation}
	By combining together \eqref{I_1} and \eqref{I_0}, we deduce
		$$I_0-I_1=\frac{n\omega_n}{n-s}-\frac{1}{n-s}\int_{\partial B_1}(1+\varphi_\varepsilon(x))^{n-s}d\mathcal H^{n-1}(x).$$
	
    Now, for $t \in (0,1)$, let us introduce the sets
		$$E_\varepsilon^t:=\{(1+t\varphi_\varepsilon(x))x\,:\,x \in \partial B_1\}$$
	and the function
		$$h(t):=\int_{\partial B_1}(1+t\varphi_\varepsilon(x))^{n-s}\,d\mathcal H^{n-1}(x).$$
Thus, in particular, we have
		\begin{equation*}
			E_\varepsilon^0=B_1,\quad E_\varepsilon^1=E,\quad h(0)=n\omega_n, \quad \text{and} \quad h(1)=\int_{\partial B_1}(1+\varphi_\varepsilon(x))^{n-s}d\mathcal H^{n-1}(x),
		\end{equation*}
		which ensures
		\begin{equation}\label{eq_I}I_0-I_1=\frac{1}{n-s}(h(0)-h(1)).
		\end{equation}
		
		In order to estimate the difference $h(0)-h(1)$, we argue similarly as in the proof of Theorem 2.1 in \cite{F2M3}.
		Since $|E|=\omega_n$, we have that
		\begin{equation}\label{h(0)}\int_{\partial B_1}(1+\varphi_\varepsilon(x))^n\,d\mathcal H^{n-1}(x)=n\int_{\partial B_1}\int_0^{1+\varphi_\varepsilon(x)}\rho^{n-1}\,d\rho\,d\mathcal H^{n-1}(x)=n|E|=n\omega_n=h(0).
		\end{equation}
		Hence 
		$$h(0)-h(1)=\int_{\partial B_1}(1+\varphi_\varepsilon(x))^n(1-(1+\varphi_\varepsilon(x)^{-s}))\,d\mathcal H^{n-1}(x).$$
By using a Taylor expansion, we have for any $|z|<1/2$:
		\[
		\begin{split}
			(1+z)^n\big(1-(1+z)^{-s}\big)&=\left(1+nz+\frac{n(n-1)}{2}z^2 +R_1(z)\right)\left(sz-\frac{s(s+1)}{2}z^2 + sR_2(z)\right)\\
			&\le sz +\left(sn-\frac{s(s+1)}{2}\right)z^2 +sR_3(z)
		\end{split},\]
		where $|R_3(z)|\le C(n)|z|^3$.
		Therefore, we get
		\begin{equation}\label{h}
			\begin{split}
				h(0)-h(1)&\le\int_{\partial B_1} \left(s\varphi_\varepsilon(x)+s\left(n-\frac{s+1}{2}\right)\varphi^2_\varepsilon(x)\right)\,d\mathcal H^{n-1}(x)+sC(n)\int_{\partial B_1}|\varphi_\varepsilon(x)|^3\,d\mathcal H^{n-1}(x)\\
                &\le \int_{\partial B_1} \left(s\varphi_\varepsilon(x)+s\left(n-\frac{s+1}{2}\right)\varphi^2_\varepsilon(x)\right)\,d\mathcal H^{n-1}(x)+sC(n)\int_{\partial B_1}\varphi^2_\varepsilon(x)\,d\mathcal H^{n-1}(x)			
			\end{split}
		\end{equation}
		where in the last inequality we have used that $\|\varphi_\varepsilon\|_{C^1(\partial B_1)}\le \eta_0<1/2$ to bound the $L^3(\partial B_1)$ norm of $\varphi_\varepsilon$ with its $L^2(\partial B_1)$ norm.
		
		On the other hand, the volume constraint \eqref{h(0)}, gives 
		$$\int_{\partial B_1} \left((1+\varphi_\varepsilon(x))^n-1\right)\,d\mathcal H^{n-1}(x)=0,$$
		which implies
		\begin{equation}
			\begin{split}
				\int_{\partial B_1}\varphi_\varepsilon(x)\,d\mathcal H^{n-1}(x)\le -\frac{n-1}{2}\int_{\partial B_1}\varphi_\varepsilon^2(x)\,d\mathcal H^{n-1}(x)+C(n)\int_{\partial B_1}\varphi_\varepsilon^2(x)\,d\mathcal H^{n-1}(x).
			\end{split}
		\end{equation}
		Inserting such estimate into \eqref{h}, we deduce
		
		\begin{equation}\label{final-h}
			\begin{split}
				h(0)-h(1)&\le -\frac{s(n-1)}{2}\|\varphi_\varepsilon\|^2_{L^2(\partial B_1)}+s\left(n-\frac{s+1}{2}\right)\|\varphi_\varepsilon\|^2_{L^2(\partial B_1)}+2sC(n)\|\varphi_\varepsilon\|^2_{L^2(\partial B_1)}\\
				&=\frac{s}{2}(n-s)\|\varphi_\varepsilon\|^2_{L^2(\partial B_1)}+2sC(n)\|\varphi_\varepsilon\|^2_{L^2(\partial B_1)}\le C_{n,s}\|\varphi_\varepsilon\|^2_{L^2(\partial B_1)}.
			\end{split}
		\end{equation}
		
		By combining \eqref{eq_V},\eqref{eq_I}, and \eqref{final-h}, we obtain
		\begin{equation}\label{final-vs}
		    V_s(B_1)-V_s(E_\varepsilon)\le I_0-I_1= \frac{1}{n-s}(h(0)-h(1))\le \tilde C_{n,s}\|\varphi_\varepsilon\|^2_{L^2(\partial B_1)},
            \end{equation}
		which concludes the proof of \eqref{eq:contradiction} and thus of the Theorem.
	\end{proof}

{ \begin{Remark}\label{rmk:fuglede}
For a nearly spherical set $E$, such that 
\[
\partial E = \left\{ (1 + \varphi(x))x \,:\, x \in \partial B_1 \right\}, \quad |E| = \omega_n, \quad \int_E x \, dx = 0, \quad \text{and} \quad \|\varphi\|_{C^1(\partial B_1)} < \eta_0,
\]
where $\eta_0 = \eta_0(n) \in (0,1/2]$ is the constant from \cite[Theorem 2.1]{F2M3}, by \cite[Theorem 6.2]{DiCNRV}, there exist two constants $c_1, c_2 > 0$, depending only on $n$ and with $c_1 < c_2$, such that
\[
c_1 \left( [u]^2_{H^{\frac{1+s}{2}}(\partial B_1)} + s P_s(B_1)\|u\|^2_{L^2(\partial B_1)} \right) \le P_s(E) - P_s(B_1) \le c_2 [u]^2_{H^{\frac{1+s}{2}}(\partial B_1)}.
\]
Hence, recalling \eqref{use-def-beta}, we obtain
\[
\beta_s^2(E) \ge \delta_s(E) \ge \frac{c_1}{P_s(B_1)} [u]^2_{H^{\frac{1+s}{2}}(\partial B_1)} + s c_1 \|u\|^2_{L^2(\partial B_1)}.
\]
Moreover, by \eqref{final-vs},
\[
\beta_s^2(E) = \delta_s(E) + \zeta_s(E) \le \frac{c_2}{P_s(B_1)
} [u]^2_{H^{\frac{1+s}{2}}(\partial B_1)} + \frac{c_{n,s}\tilde C_{n,s}}{n \omega_n P_s(B_1)} \|u\|^2_{L^2(\partial B_1)},
\]
where $\tilde C_{n,s} > 0$ is the constant appearing in \eqref{final-vs}. Therefore, there exist constants $C_1, C_2 > 0$, depending only on $n$ and $s$, with $C_1 < C_2$, such that
\[
C_1 \left( [u]^2_{H^{\frac{1+s}{2}}(\partial B_1)} + \|u\|^2_{L^2(\partial B_1)} \right) \le \beta_s^2(E) \le C_2 \left( [u]^2_{H^{\frac{1+s}{2}}(\partial B_1)} + \|u\|^2_{L^2(\partial B_1)} \right).
\]
{In particular, making use of the Poincar\'e-type inequality \eqref{eq:s-poincare}, we deduce that our quantitative estimate \eqref{eq:our-quant-beta} is essentially equivalent to the Fuglede-type estimate \eqref{fugF2M3} for nearly spherical sets.}
\end{Remark}
}
\begin{customproof}{Theorem \ref{our-quant-beta}}	{	In view of Proposition \ref{prop:s-poincare}, it is enough to show that there exists a constant $C$ such that 
		$$\beta_s^2(E)\le C\delta_s(E).$$
		
		Since the quantities $\beta_s(E)$ and $\delta_s(E)$   are scale invariant, we shall assume from now on and without loss of generality that $|E|=\omega_n$ and $B_r=B_1$. By Theorem \ref{rigidity-min}, we have
		\begin{equation*}
			P_s(E)+\eee_1V_s(E)\geq P_s(B_1)+\eee_1V_s(B_1),
		\end{equation*}
		which implies 
		\begin{equation}\label{eq:last-est}
        \delta_s(E)\geq C\,\zeta_s(E),	\end{equation}
        for some constant $C>0$ depending only on $n$ and $s$. Since by \eqref{use-def-beta} we have that $\beta_s^2(E)=\delta_s(E)+\zeta_s(E)$, this concludes the proof of the Theorem.}
        \end{customproof}

{ 
\begin{Remark}[Asymptotic behavior as $s\to 1^-$]\label{rmk-asy}
Once the stability estimate \eqref{our-quant-beta} has been established, we now turn to its asymptotic behaviour as $s \to 1^-$. In particular, our goal is to verify that, in the limit $s \to 1^-$, one recovers the classical stability estimate \eqref{mainFJ} for the perimeter, proved in \cite{FJ}. 

Thanks to the well-known asymptotic properties of $P_s$, it follows that $\beta_s(\cdot)\to \beta(\cdot)$ and hence $A_s(\cdot)\to A(\cdot)$ as $s \to 1^-$. Therefore, the crucial point is to keep precise track of the dependence on $s$ of the constant $c(n,s)>0$ appearing in \eqref{eq:our-quant-beta}, and thus of all the constants involved in our argument. It is straightforward to recognize that, throughout Sections \ref{sec:prel}, \ref{sec:exgen}, \ref{sec:exmmin}, \ref{sec:regularity} and \ref{sec:rigidity}, all the constants depending on $s$ are explicit, except for $c_Q$ in \eqref{isop-F2M3} and the constants $\overline{\varepsilon}$, ${C_{DE}}$ and ${r_{DE}}$ involved in the density estimate of Theorem \ref{densityestimates}.

The constant $c_Q$, appearing in the quantitative fractional isoperimetric inequality \eqref{isop-F2M3}, is uniformly bounded as $s\to 1^-$, as observed in \cite[Remark 1.2]{F2M3}.  

Let us now focus on the constants in Theorem \ref{densityestimates}. By the proof of \cite[Theorem 1.4]{CT}, one sees that the dependence on $s$ of these constants is determined by the structural parameters in assumption (S3) of \cite{CT}, which are necessary for the validity of density estimates. For this reason, we briefly check them here. This analysis was partly carried out in \cite[Proposition 4.3]{CT}, but we provide the details to make the dependence on $s$ completely explicit. 

First, considering the localization of $P_s$ in \eqref{Ps-loc}, by \cite[Proposition 2.5]{DiCNRV} and \cite[Theorem 1]{BBM}, assumption (H4) of \cite{CT} (relative isoperimetric inequality) is fulfilled with
\begin{equation}\label{def_f1}
f_1(m):=C_1(n,s)m^{\frac{n-s}{n}}, \qquad \text{for }m>0,
\end{equation}
for any $r_0>0$, where $C_1(n,s)\sim \tfrac{1}{1-s}$ as $s\to 1^-$. On the other hand, assumptions (H6) and (H12) with $C=1$ follow directly from \eqref{Ps-loc}.

Regarding (H13), let $E\subset \R^n$ and $x \in \R^n$. By translation invariance, we may assume $x=0$. Fixing $t>0$, for any $z \in B_t$, we compute
\[
\int_{B_t^c} \frac{dy}{|z-y|^{n+s}} = \int_{B_t(-z)^c} \frac{dy}{|y|^{n+s}} \leq \int_{B_{t-|z|}^c} \frac{dy}{|y|^{n+s}} = \frac{n\omega_n}{s}\frac{1}{(t-|z|)^s},
\]
which, together with the coarea formula, yields for a.e.\ $t>0$:
\begin{equation*}
 P_s(B_t; E) = \int_{E \cap B_t} \int_{B_t^c}\frac{dzdy}{|z-y|^{n+s}} \leq \frac{n\omega_n}{s}\int_{E \cap B_t} \frac{dz}{(t-|z|)^{s}}
= \frac{n\omega_n}{s}\int_{0}^t \frac{\mathcal{H}^{n-1}(E \cap \partial B_{\rho})}{(t-\rho)^s}\, d\rho.
\end{equation*}
Hence, by applying the Fubini-Tonelli Theorem, for any $r>0$, we obtain \[ \begin{split} \frac{1}{r}\int_0^r& P_s(B_t; E) \, dt \\&=\frac{n\omega_n}{sr}\int_0^r\int_0^t \frac{\mathcal{H}^{n-1}(E \cap \partial B_{\rho})}{(t-\rho)^s} \, d\rho \,dt =\frac{n\omega_n}{sr}\int_0^r \int_\rho^r \frac{\mathcal{H}^{n-1}(E \cap \partial B_{\rho})}{(t-\rho)^s} \, dt\, d\rho \\&= \frac{n\omega_n}{sr}\frac{(r-\rho)^{1-s}}{1-s}\int_0^r\mathcal{H}^{n-1}(E \cap \partial B_{\rho})\,d\rho \leq \frac{n\omega_n}{sr}\frac{r^{1-s}}{1-s}\, |E \cap B_r| = \frac{n\omega_n}{s(1-s)}\, r^{-s}\, |E \cap B_r|. \end{split} \]
Thus, condition (H12) in \cite{CT} holds with
\begin{equation}\label{def_f2}
f_2(r,m) := C_2(n,s)\, r^{-s} m, \qquad \text{for }m>0 \ \text{ and for any  }\ r>0,
\end{equation} 
where $C_2(n,s):= \tfrac{n\omega_n}{s(1-s)}$.

Finally, let us verify (H14). By Lemma \ref{Ek-rho-minim}, $\rho(r)=C_0(n,s)\bar \varepsilon r^{n-s}$, with $C_0(n,s):=\tfrac{10n\omega_n}{n-s}$. Combining this with \eqref{def_f1} and \eqref{def_f2}, we obtain
\[
f_3(r,m) := 2^n\left(\frac{f_2(r, m) + \rho(r)}{f_1(m)}\right) =\frac{C_2}{2^{-n}C_1}\Big(\frac{m}{r^n}\Big)^{\frac{s}{n}} + \frac{C_0\bar \varepsilon}{2^{-n}C_1}\Big(\frac{m}{r^n}\Big)^{\frac{s-n}{n}}.
\]
Setting $\delta := \tfrac{m}{r^n}$, one checks that 
\[
f_3(r,m)\leq 1 \quad \text{whenever} \quad \delta \in \left(\tfrac{\eee_1}{2^n},\eee_1\right],
\]
for some suitable $\eee_1>0$ and $r_3>0$. More precisely, $f_3(r,m)\leq 1$ provided that $\eee_1 \in \mathcal{I}:=(a,b]$, with
\[
a(n,s):=2^n\Big(\frac{2^{n+1} C_0\bar \varepsilon}{C_1}\Big)^{\tfrac{n}{n-s}} \sim (1-s)^{\tfrac{n}{n-s}}, \qquad 
b(n,s):=\Big(\frac{C_1}{2^{n+1}C_2} \Big)^{\tfrac{n}{s}} \sim s^{\tfrac{n}{s}} \quad \text{as } s \to 1^-.
\]
According to \cite[Remark 1.5]{CT}, the interval $\mathcal{I}$ is well defined as soon as $\bar \eee$ is sufficiently small. Consequently, all the parameters involved in assumption (H14), as well as $\bar\eee$, ${C_{DE}}$ and ${r_{DE}}$ in Theorem \ref{densityestimates}, remain uniformly bounded as $s\to 1^-$.  

In conclusion, as $s\to 1^-$, the estimate \eqref{eq:our-quant-beta} recovers the stability result for the classical perimeter in \eqref{mainFJ}. 
\end{Remark}}

	\section{A quantitative fractional Cheeger inequality}\label{sec:cheeger}
	In this Section, we show some quantitative versions of the fractional Cheeger inequality, which is the nonlocal analogue of the result contained in \cite{FMP} and \cite{JS}.
	
	Let $\Omega \subset \mathbb R^n$ be an open set with finite measure and let $m>\frac{n-s}{n}$. We recall that the \textit{fractional Cheeger constant} (or \textit{$(m,s)$-Cheeger constant}) of $\Omega$ is given by
	$$
	h_{m,s}(\Omega):=\inf\left\{\frac{P_s(E)}{|E|^m},\;\;E\subset \Omega\right\}.$$
	We call a \textit{fractional Cheeger set} or a \textit{$(m,s)$-Cheeger set} any minimizer of the above infimum problem, whose existence follows by a standard application of the classical direct method in the Calculus of Variations (see e.g. Proposition 5.3 in \cite{BLP}, for the case $m=1$).
	
	Let $B$ denote the Euclidean unit ball. The following \textit{fractional Cheeger inequality} holds true
	\begin{equation}\label{fractional-cheeger}|\Omega|^{m-\frac{n-s}{n}} h_{m,s}(\Omega)\ge |B|^{m-\frac{n-s}{n}} h_{m,s}(B).\end{equation}
	
	For the case $m=1$, the fractional Cheeger constant and properties of fractional Cheeger sets have been studied in \cite{BLP}.
      
    Also, we say that $\Omega$ is {\it $(m,s)$-calibrable} if it is an $(m,s)$-Cheeger set of itself, i.e. if
\[
h_{m,s}(\Omega)=\frac{P_s(\Omega)}{|\Omega|^m}.
\]
\begin{Remark}\label{C_ball}   
As in the local case, any ball $B\subset\mathbb{R}^n$ is $(m,s)$-calibrable. This is a direct consequence of the nonlocal isoperimetric inequality \eqref{non-isop-in}, which gives for every $E\subset B$
\begin{equation*}
\frac{P_s(E)}{|E|^m}\ge \frac{P_s(B)}{|B|^m}\, \left(\frac{|B|}{|E|}\right)^\frac{sm}{n}\ge \frac{P_s(B)}{|B|^m}.
\end{equation*}
\end{Remark}

The aim of this Section is to provide a quantitative version of \eqref{fractional-cheeger}, following the approach in \cite{JS}.

    \begin{customproof}{Theorem \ref{Thm:Ch-2}} 
    Thanks to the scaling property of the Cheeger constant, i.e. $h_{m,s}(\lambda \Omega) = \lambda^{(1-m)n-s}h_{m,s}(\Omega)$ for any $\lambda>0$, it is enough to prove the claim for $\Omega$ such that $|\Omega|=\omega_n$ and $B_r=B$ is the unit ball.
		
		Moreover, since by \eqref{Vs-balls}, we know that $V_s(B)=\frac{n\omega_n}{n-s}$, it follows that
        { $\zeta_s(\Omega)\le 1$}. Therefore, the inequality \eqref{quant-cheeger-2} immediately follows for sets $\Omega$ such that  $h_{m,s}(\Omega) \ge 2h_{m,s}(B)$, by choosing { $\kappa_{m,n,s}\leq 1$}. Therefore, hereafter, let us consider $\Omega$ with volume $\omega_n$ and such that $h_{m,s}(\Omega) < 2h_{m,s}(B)$. Let us denote by $E$ a $(m,s)$-Cheeger set of $\Omega$.
		
		First of all, let us  prove the two following estimates
		\begin{equation}\label{first}
			|E|\ge |\Omega|\left( \frac{h_{m,s}(B)}{h_{m,s}(\Omega)} \right)^\frac{1}{m-\frac{n-s}{n}}
		\end{equation}
		and \begin{equation}\label{second}
			\delta_s(E) \le \frac{h_{m,s}(\Omega)-h_{m,s}(B)}{h_{m,s}(B)}.
		\end{equation}
		
		We start by proving \eqref{first}. By the fractional isoperimetric inequality \eqref{non-isop-in}, we know that
		$$\frac{P_s(E)}{|E|^m}\ge \frac{P_s(B)}{|B|^\frac{n-s}{n}}\cdot|E|^{\frac{n-s}{n}-m}=:\mathcal I_{n,s}|E|^{\frac{n-s}{n}-m}$$
        where $\mathcal I_{n,s}:=\frac{P_s(B)}{|B|^\frac{n-s}{s}}$ denotes the $s$-isoperimetric constant. Hence, since $E$ is a $(m,s)$-Cheeger set of $\Omega$, by Remark \ref{C_ball}, we deduce that
		$$|E|^{m-\frac{n-s}{n}}\ge \frac{P_s(B)}{|B|^{\frac{n-s}{n}}}\cdot \frac{1}{h_{m,s}(\Omega)}=|B|^{m-\frac{n-s}n}\frac{h_{m,s}(B)}{h_{m,s}(\Omega)},$$
		which proves \eqref{first}.
		
		Let us show now \eqref{second}. Again by Remark \ref{C_ball}, we have that
		$$h_{m,s}(\Omega)-h_{m,s}(B)\ge \frac{P_s(E)}{|E|^{\frac{n-s}{n}}}\cdot |E|^{\frac{n-s}{n}-m} - \frac{P_s(B)}{|B|^{\frac{n-s}{n}}}\cdot |B|^{\frac{n-s}{n}-m}.$$
        Diving the above inequality by $\mathcal I_{n,s}$ and recalling the definition of $\delta_s(E)$, we get
		\[
		\begin{split}\frac{h_{m,s}(\Omega)-h_{m,s}(B)}{\mathcal I_{n,s}}&\ge (1+\delta_s(E))|E|^{\frac{n-s}{n}-m}-|B|^{\frac{n-s}{n}-m}\\
			&= \delta_s(E)|E|^{\frac{n-s}{n}-m} + \left( |E|^{\frac{n-s}{n}-m}-|B|^{\frac{n-s}{n}-m}\right).
		\end{split}
		\]
		Since $m> \frac{n-s}{n}$ and $|E| \le |\Omega|=|B|$, we have that the last term on the right-hand side is non-negative. This implies that
		$$
		\frac{h_{m,s}(\Omega)-h_{m,s}(B)}{\mathcal I_{n,s}}\ge\delta_s(E)|E|^{\frac{n-s}{n}-m}\ge \delta_s(E)|B|^{\frac{n-s}{n}-m}.
		$$
		Recalling the definition of $\mathcal I_{n,s}$ and using again Remark \ref{C_ball}, we obtain
		$$ \delta_s(E) \le \frac{h_{m,s}(\Omega)-h_{m,s}(B)}{h_{m,s}(B)}, $$
		and thus the proof of \eqref{second} is concluded.        
        
        Now, let us start by estimating $\zeta_s(\Omega)$ in terms of $\zeta_s(E)$. Up to translating $E$ (and therefore $\Omega$) we  may  assume that $E$ is $V_s$-centered at the origin, i.e.
		\begin{equation*}
            { V_s(E)=\int_{E}\frac{1}{|x|^s}\,dx}       
        \end{equation*}
		By setting $r_E>0$ the radius of the ball $E^*$ and adding and removing {$r_{E}^{n-s}$} in the definition of $\zeta_s(\Omega)$
		we obtain
        { 
		\begin{equation}\label{eq:eq1}
        \begin{split}
			  \zeta_s(\Omega) 
            &= 1-r_E^{n-s}+r_E^{n-s}-\frac{V_s(\Omega)}{V_s(B)}
            =1-\frac{P_s(E^*)}{P_s(B)}+\frac{V_s(E^*)}{V_s(B)}-\frac{V_s(\Omega)}{V_s(B)} \\
			&\leq\,\frac{P_s(B)-P_s(E^*)}{P_s(B)} +\frac{V_s(E^*)-V_s(E)}{V_s(E^*)}
            =\,\frac{P_s(B)-P_s(E^*)}{P_s(B)}  +\zeta_s(E).
            \end{split}
        \end{equation}}
		where in the first equality we have used ($iii$) of Proposition \ref{elementary_properties_ps} and ($iii$) of Proposition \ref{elementary_properties_vs} while the last inequality follows by observing that since $E\subset \Omega$, we have $E^*\subset B$, $V_s(E)\leq V_s(\Omega)$ and {$V_s(E^*)\leq V_s(B)$}.
		
		Now to get \eqref{quant-cheeger-2}, we want to estimate both terms on the right hand side through the re-normalized difference of the Cheeger constants $(h_{m,s}(\Omega)-h_{m,s}(B))h_{m,s}(B)^{-1}$. At first, by exploiting \eqref{first}, we get 
		\begin{equation*}
			\frac{P_s(B)-P_s(E^*)}{P_s(B)} 
			=
			1 - \left( \frac{|E|}{|\Omega|}\right)^{\frac{n-s}{n}} \leq 1-\left( \frac{h_{m,s}(B)}{h_{m,s}(\Omega)}\right)^{\frac{n-s}{mn-(n-s)}}
		\end{equation*}
		consequently, since $h_{m,s}(\Omega)\geq h_{m,s}(B)$, it follows
		\begin{equation*}
			\frac{P_s(B)-P_s(E^*)}{P_s(B)} 
			\leq 1-\left( \frac{h_{m,s}(B)}{h_{m,s}(\Omega)}\right)^a
			=\frac{h_{m,s}(\Omega)^a-h_{m,s}(B)^a}{h_{m,s}(\Omega)^a}
		\end{equation*}
		with $a:=\frac{n-s}{mn-(n-s)}+1\geq1$. Then, by recalling the standard inequality $t^a -\tau^a \le a t^{a-1}(t-\tau)$ whenever $a\ge 1$, and $\tau\in (0,t]$, we finally obtain
		\begin{equation}\label{eq:eq2}
			\frac{P_s(B)-P_s(E^*)}{{P_s(B)}} \le \left(\frac{n-s}{mn-(n-s)}+1\right)\frac{h_{m,s}(\Omega) - h_{m,s}(B)}{h_{m,s}(\Omega)}.
		\end{equation}
		
		Finally, in order to estimate $\zeta_s(E)$, by \eqref{second} and \eqref{eq:last-est} we have 
		\begin{equation}\label{stima-zetaE}
			\zeta_s(E)\leq \frac{1}{C}\delta_s(E)\leq \frac{1}{C}\frac{h_{m,s}(\Omega) - h_{m,s}(B)}{h_{m,s}(\Omega)},
		\end{equation}
		where the constant $C$ is the one appearing in \eqref{eq:last-est}. Thus, combining \eqref{eq:eq1}, \eqref{eq:eq2} and \eqref{stima-zetaE}, we find
        { 
		\begin{equation*}
				\zeta_s(\Omega) \leq \left(\frac{n-s}{mn-(n-s)}+1\right)\left(\frac{h_{m,s}(\Omega) - h_{m,s}(B)}{h_{m,s}(\Omega)}\right)
                +\frac{1}{C} \left(\frac{h_{m,s}(\Omega) - h_{m,s}(B)}{h_{m,s}(\Omega)}\right)
		\end{equation*}}
		which yields the claim by setting { $$\kappa({m,n,s}):=\left(\frac{n-s}{mn-(n-s)}+1+\frac{1}{C}\right)^{-1}.$$}
	\end{customproof}

In light of \eqref{controlloRieFra}, a quantitative version of \eqref{fractional-cheeger}, involving Frankel asymmetry holds.
\begin{Corollary}\label{Thm:Ch-1}
		Let $\Omega \subset \mathbb R^n$ be an open set with finite measure and let $m>\frac{n-s}{n}.$ Then, there exists a constant $\gamma(m,n,s)>0$, such that
		\begin{equation}\label{quant-cheeger}
            \frac{h_{m,s}(\Omega) - h_{m,s}(B_r)}{h_{m,s}(B_r)} \ge  \gamma(m,n,s)\,\alpha^2(\Omega),\qquad \text{with } |B_r|=|\Omega|.            
		\end{equation}
\end{Corollary}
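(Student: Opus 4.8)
The plan is to deduce Corollary~\ref{Thm:Ch-1} directly from Theorem~\ref{Thm:Ch-2} together with the lower bound \eqref{controlloRieFra}, which relates the asymmetry index $\zeta_s$ to the Fraenkel asymmetry. Both sides of \eqref{quant-cheeger} are scale invariant: the normalized Cheeger deficit by the scaling law $h_{m,s}(\lambda\Omega)=\lambda^{(1-m)n-s}h_{m,s}(\Omega)$ already used in the proof of Theorem~\ref{Thm:Ch-2}, and the Fraenkel asymmetry $\alpha(\Omega)$ by its very definition. Hence we may normalize $|\Omega|=\omega_n$, so that $B_r=B$ is the unit ball, exactly as in the proof of Theorem~\ref{Thm:Ch-2}.

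First I would invoke Theorem~\ref{Thm:Ch-2} to obtain
$$
\frac{h_{m,s}(\Omega)-h_{m,s}(B)}{h_{m,s}(B)}\ \ge\ \kappa(m,n,s)\,\zeta_s(\Omega),
$$
with $\kappa(m,n,s)>0$ the constant produced there. Next I would apply the estimate \eqref{controlloRieFra}, established as a byproduct of Proposition~\ref{prop:s-poincare}, with the choice $E=\Omega$: this is legitimate since \eqref{controlloRieFra} holds for every measurable set of finite positive measure, in particular for the open set $\Omega$. It gives $\zeta_s(\Omega)\ge C_0\,\alpha^2(\Omega)$ for a constant $C_0=C_0(n,s)>0$. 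Chaining the two inequalities and setting $\gamma(m,n,s):=\kappa(m,n,s)\,C_0(n,s)$ yields \eqref{quant-cheeger}, completing the proof.

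I do not expect any genuine obstacle here: the statement is a formal consequence of a result and of an auxiliary estimate that are both available at the point it is stated. The only point worth a second glance is that the Fraenkel asymmetry appearing in \eqref{quant-cheeger} is that of the whole set $\Omega$, not of its $(m,s)$-Cheeger subset $E$; but this is already handled inside the proof of Theorem~\ref{Thm:Ch-2}, where $\zeta_s(\Omega)$ itself — rather than $\zeta_s(E)$ — appears on the right-hand side, having been bounded in terms of $\zeta_s(E)$ and of the perimeter ratio of $E^*$ against $B$. An alternative, more self-contained route would be to repeat the argument of Theorem~\ref{Thm:Ch-2} while tracking $\alpha$ directly through the sharp quantitative fractional isoperimetric inequality \eqref{isop-F2M3} applied to the Cheeger set $E$; this would also work, but it is strictly longer, so the composition above is preferable.
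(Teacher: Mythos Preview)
Your proposal is correct and matches the paper's approach exactly: the paper introduces Corollary~\ref{Thm:Ch-1} with the sentence ``In light of \eqref{controlloRieFra}, a quantitative version of \eqref{fractional-cheeger}, involving Frankel asymmetry holds,'' which is precisely the chaining of Theorem~\ref{Thm:Ch-2} with the bound $\zeta_s(\Omega)\ge C_0\,\alpha^2(\Omega)$ that you describe. Your remark about the alternative self-contained route via \eqref{isop-F2M3} is also anticipated in the paper's subsequent comment on generalizing the approach of \cite{FMP}.
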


We also point out that, once \eqref{first} and \eqref{second} are established, inequality \eqref{quant-cheeger} can be readily deduced by suitably generalizing the approach developed in \cite{FMP}.

  {\begin{Remark}
According to Remark~\ref{rmk-asy}, in the limit $s\to 1^-$, the estimates \eqref{quant-cheeger-2} and \eqref{quant-cheeger} reduce to the classical stability results for the Cheeger constant obtained in~\cite[Theorem~2.1]{JS} (for $m=1$) and~\cite{FMP}, respectively.
    \end{Remark}}
    
\subsection{Failure of the stability estimate for fractional Cheeger constant with the index \texorpdfstring{$\beta_s$}{betas}.}\label{ssec:cheeger-beta} 
{At this stage, it is natural to investigate whether a stability estimate for the fractional Cheeger constant involving the oscillation index $\beta_s$ could hold. In the local setting, the failure of such an estimate was demonstrated in \cite[Subsection 2.2]{JS}. First, we show that one of the counterexamples constructed in~\cite{JS}, based on planar sets with increasing boundary oscillations, also applies in the nonlocal setting. We then present an alternative counterexample inspired by the fractal-type sets discussed in~\cite{Lom}, which turn out to be suitable in the nonlocal case as well.

More precisely, we show that there does not exist any constant $c>0$ such that the inequality
\begin{equation}\label{eq:failure_beta_s}
\frac{h_{m,s}(\Omega) - h_{m,s}(B_r)}{h_{m,s}(B_r)} \ge c\, \beta_s^2(\Omega),\qquad \text{with } |B_r|=|\Omega|,
\end{equation}
holds. This is proved by constructing some suitable families of sets.

First of all, fix a universal constant $\varepsilon>0$ sufficiently small, and consider the family of bounded planar sets $\{\Omega^\varepsilon_j\}_{j\in \mathbb{N}}$, whose boundary is given by
\begin{equation*}
\partial \Omega^\varepsilon_j
= \{\, \theta(1+\eee\,u(\theta))\,:\, \theta\in \partial B_1\},  \qquad \text{with } \ u(\theta):=\sin(2j\,\theta),
\end{equation*}
where we systematically identify $\partial B_1\equiv \mathbb{S}^1$ with the interval $[0,2\pi]$.
The volume of $\Omega^\varepsilon_j$ is
\begin{equation}\label{calcolovolume}
\begin{split}
    |\Omega^\varepsilon_j|
    &=\frac{1}{2} \int_{0}^{2\pi} \bigl( 1 + u(\theta) \bigr)^{2} \, d\theta
    = \frac{1}{2} \int_{0}^{2\pi} \bigl( 1 + 2u(\theta) + u^{2}(\theta) \bigr) \, d\theta\\
    &= \pi+\int_0^{2\pi}\sin^2(2j\,\theta)\,d\theta
    = \pi \left( 1 + \frac{\varepsilon^{2}}{2} \right).
\end{split}    
\end{equation}
Let us now focus on $P_s(\Omega^\varepsilon_j)$. By \cite[(2.20)]{F2M3}, for $\varepsilon>0$ sufficiently small, we have
\begin{equation}\label{perOmega}
   P_s(\Omega^\varepsilon_j)=\frac{\varepsilon^2}{2}\,g(\varepsilon)+\frac{P_s(B_1)}{P(B_1)}\,h(\varepsilon),
\end{equation}
where we have set
\[
  g(\varepsilon):=\int_{\partial B_{1}}\int_{\partial B_{1}}\biggl(\int_{u(y)}^{u(x)}\int_{u(y)}^{u(x)} f_{|x-y|}(1+\varepsilon r,1+\varepsilon\varrho)\,dr\,d\varrho \biggr)\,d\mathcal{H}^1_x\,d \mathcal{H}^1_y,
\]
with
\[
  f_{\theta}(r,\varrho):=\frac{r\varrho}{(|r-\varrho|^2+r\varrho\,\theta^2)^{\frac{2+s}{2}}}, \quad \text{for all } r,\varrho,\theta>0,
\]
and
\[
  h(\varepsilon):=\int_{\partial B_1}(1+\varepsilon u(x))^{2-s}\,d\mathcal{H}^1_x.
\]
Hence, in particular,
\begin{equation}\label{stima-PsOmegaeee}
    P_s(\Omega_j^\eee)\geq \frac{1}{2}\int_{\partial B_{1}}\int_{\partial B_{1}}\biggl(\int_{1+\eee u(y)}^{1+\eee u(x)}\int_{1+\eee u(y)}^{1+\eee u(x)}\frac{r\varrho}{(|r-\varrho|^2+r\varrho\,|x-y|^2)^{\frac{2+s}{2}}}\,dr\,d\varrho\biggl)\,d\mathcal{H}^1_x\,d \mathcal{H}^1_y,
\end{equation}
Now, for $m = 0,\dots,j-1$, let us set
\[
x_m := \frac{\pi}{4j} + m \frac{\pi}{j}, \qquad
y_m := \frac{3\pi}{4j} + m \frac{\pi}{j},
\]
and $r := \frac{1}{4j}$. Define
\[
I_m := \{ x\in[0,2\pi] \,: \,|x - x_m| \le r \}, \qquad
J_m := \{ y\in[0,2\pi] \,:\, |y - y_m| \le r \},
\]
so that, in particular, $|I_m|=|J_m|=2r=\frac{1}{2j}$. Then, for any $x\in I_m$ and $y\in J_m$, we have
\begin{equation}\label{fatto1}
    |x-y| \le |x-x_m| + |x_m-y_m| + |y_m-y| \le 2r+\frac{\pi}{2j}
    =\frac{\pi/2 + 1/2}{j}<\frac{4}{j},
\end{equation}
and $u(x)=\sin(2jx)>\frac{1}{2}$ and $u(y)=\sin(2jy)<-\frac{1}{2}$, which implies
\begin{equation}\label{fatto2}
|u(x)-u(y)| = |\sin(2jx)-\sin(2jy)|
= \sin(2jx)-\sin(2jy)\geq \frac{1}{2}-\left(-\frac{1}{2}\right)=1.
\end{equation}
For any $x\in I_m$ and $y\in J_m$, defining also $$\mathcal{S}:=\left\{ (r,\varrho)\in [1+\eee u(y),1+\eee u(x)]\times[1+\eee u(y),1+\eee u(x)]:\ |r-\varrho|\leq \frac{|x-y|}{2} \right\},$$
then, for $j\in\N$ sufficiently large, $|\mathcal{S}|\geq C|x-y|$, for some universal constant $C>0$. Moreover, for any $r,\varrho \in \mathcal{S}$, we have
\begin{equation*}
    r\varrho \geq (1+\eee u(y))^2\geq (1-\eee)^2
\end{equation*}
and 
\begin{equation*}
    |r-\varrho|^2+r\varrho|x-y|^2\leq \frac{|x-y|^2}{4}+(1+\eee u(x))|x-y|^2 \leq \left(\frac{1}{4}+(1+\eee)^2\right)|x-y|^2.
\end{equation*}
Consequently,
\begin{equation}\label{stima-N}
\begin{split}
    \int_{1+\eee u(y)}^{1+\eee u(x)}\hspace{-0.3em}\int_{1+\eee u(y)}^{1+\eee u(x)}\frac{r\varrho}{(|r-\varrho|^2+r\varrho\,|x-y|^2)^{\frac{2+s}{2}}}\,dr\,d\varrho &\geq \int\hspace{-0.6em}\int_{\mathcal{S}}\frac{(1-\eee)^2}{(\frac{1}{4}+(1+\eee)^2)^{\frac{2+s}{2}}|x-y|^{2+s}}\,dr\,d\varrho \\
    &= \frac{C_0}{|x-y|^{2+s}} |\mathcal{S}|\geq\frac{C_0}{|x-y|^{1+s}}    \end{split}
\end{equation}
for some suitable constant $C_0>0$, depending only on $s$ and $\eee$, whose value is allowed to change from line to line. Finally, using \eqref{stima-PsOmegaeee}, \eqref{fatto1}, \eqref{fatto2} and \eqref{stima-N}, we obtain
\begin{equation*}
\begin{split}
    P_s(\Omega_j^\eee)&\geq \frac{1}{2}\int_{\partial B_{1}}\int_{\partial B_{1}}\biggl(\int_{1+\eee u(y)}^{1+\eee u(x)}\int_{1+\eee u(y)}^{1+\eee u(x)}\frac{r\varrho}{(|r-\varrho|^2+r\varrho\,|x-y|^2)^{\frac{2+s}{2}}}\,dr\,d\varrho\biggl)\,d\mathcal{H}^1_x\,d \mathcal{H}^1_y \\
    &\geq \sum_{m=0}^{j-1}\int_{I_m}\int_{J_{m}}\biggl(\int_{1+\eee u(y)}^{1+\eee u(x)}\int_{1+\eee u(y)}^{1+\eee u(x)}\frac{r\varrho}{(|r-\varrho|^2+r\varrho\,|x-y|^2)^{\frac{2+s}{2}}}\,dr\,d\varrho\biggl)    \,d\mathcal{H}^1_x\,d\mathcal{H}^1_y\\
    &\geq \sum_{m=0}^{j-1}\int_{I_m}\int_{J_{m}} \frac{C_0}{|x-y|^{1+s}}    \,d\mathcal{H}^1_x\,d\mathcal{H}^1_y\\    
    &\geq \left(\frac{j}{4}\right)^{1+s} C_0\,\sum_{m=0}^{j-1}\int_{I_m}\int_{J_{m}}\,d\mathcal{H}^1_x\,d\mathcal{H}^1_y 
    =C_0\left(\frac{j}{4}\right)^{1+s}j\,\frac{1}{4j^2}
    =C_0\,j^{s},
\end{split}
\end{equation*}
which implies that $P_s(\Omega_j^\varepsilon)\to +\infty$ as $j\to +\infty$.

On the other hand, by the construction of the sets $\{\Omega^\varepsilon_j\}_{j\in \mathbb{N}}$, we have $B_{1-\varepsilon}\subset \Omega_j$ for any $j\in\mathbb{N}$ and $\varepsilon$ sufficiently small. Hence,
\(
h_{m,s}(B_{1-\varepsilon})=h_{m,s}(B_1)(1-\varepsilon)^{2-s-2m}
\)
provides an upper bound (uniformly in $j$) for $h_{m,s}(\Omega_j^\varepsilon)$, that is,
\begin{equation}\label{primoboundh}
h_{m,s}(\Omega_j^\varepsilon)\leq h_{m,s}(B_1)(1-\varepsilon)^{2-s-2m}.
\end{equation}
Moreover, by \eqref{calcolovolume} and \eqref{fractional-cheeger}, we also know that 
\begin{equation}\label{secondoboundh}
h_{m,s}(B)= h_{m,s}(B_1)\left(1+\frac{\varepsilon^2}{2}\right)^{\frac{2-s-2m}{2}}\leq h_{m,s}(\Omega_j^\varepsilon),
\end{equation}
where $B$ is the ball having the same volume as $\Omega_{j}^\varepsilon$.  
Thus, $h_{m,s}(\Omega_j^\varepsilon)$ is uniformly bounded both above and below, independently of $j$, while recalling \eqref{use-def-beta}, we have
\[
\beta_s^2(\Omega^\varepsilon_j) \geq \delta_s(\Omega^\varepsilon_j) = \frac{P_s(\Omega^\varepsilon_j) - P_s(B)}{P_s(B)} \to + \infty, \quad \text{as } j\to+\infty,
\]
which, together with \eqref{primoboundh} and \eqref{secondoboundh}, contradicts \eqref{eq:failure_beta_s}.

We now turn to the second counterexample to \eqref{eq:failure_beta_s}, which is based on fractal-type sets in arbitrary dimension. To this end, we begin with the following result.   

\begin{Proposition}\label{frattali}
Let $n\geq 2$. For every $\sigma\in(0,1)$ and $c_0>0$ there exists a sequence of set $\{\Omega_M\}_{M\in\N}$ with the following properties
\begin{itemize}
    \item[(\emph{i})] There exists the $\lim_{M\to\infty}P_s(\Omega_M) $ and it holds $$\lim_{M\to\infty}P_s(\Omega_M)<\infty\quad\text{ for }\,s\in(0,\sigma)\quad\textrm{and}\quad \lim_{M\to\infty}P_s(\Omega_M)=\infty\quad\text{ for }\,s\in[\sigma,1).$$
    \item[(\emph{ii})] There exists a set $\Omega_{\infty}$ such that $$ \Omega_M\to \Omega_\infty \text{in  }L^1\, \text{ as }M\to+\infty\quad \text{and}\quad |\Omega_M|\leq |\Omega_\infty|<\infty, \text{ for any } M\in\N$$ and $$P_s(\Omega_\infty)<\infty\quad\text{ for }\,s\in(0,\sigma)\quad\textrm{and}\quad P_s(\Omega_\infty)=\infty\quad\text{ for }\,s\in[\sigma,1).$$
    \item[(\emph{iii})] For any $M\in\N$, up to translation,  $$ B_{c_0}\subset \Omega_M.$$    
\end{itemize}
\end{Proposition}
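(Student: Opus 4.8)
The plan is to realize $\{\Omega_M\}_{M\in\N}$ as the polyhedral stages of a von Koch--type self-similar set in $\R^n$ whose limit boundary has Minkowski (and Hausdorff) dimension exactly $n-\sigma$. Concretely, I would start from a fixed polytope $\Omega_0$ (say a large cube) with $B_{2c_0}\Subset\Omega_0$, and fix a dissection rule replacing each boundary cell of $\Omega_M$ by $N$ rescaled copies of itself at ratio $\lambda\in(0,1)$, chosen so that $N\lambda^{\,n-\sigma}=1$, so that the boundary dimension is $d:=\log N/\log(1/\lambda)=n-\sigma\in(n-1,n)$, a genuine fractal. One arranges the rule to be monotone and confined: $\Omega_M\subset\Omega_{M+1}\subset B_{R_0}$ for a fixed $R_0$, with the core $B_{c_0}$ never touched. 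Then $\Omega_\infty:=\bigcup_M\Omega_M$ is a bounded measurable set with $|\Omega_M|\le|\Omega_\infty|<\infty$, $\Omega_M\to\Omega_\infty$ in $L^1$, and $B_{c_0}\subset\Omega_M$ for every $M$; this already gives (\emph{iii}) and the measure-theoretic part of (\emph{ii}). For the explicit construction and its elementary combinatorics I would follow \cite{Lom}.

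The heart of the matter is the two-sided multiscale estimate
\[
c\Bigl(1+\sum_{k=1}^{M}\lambda^{(\sigma-s)k}\Bigr)\ \le\ P_s(\Omega_M)\ \le\ C\Bigl(1+\sum_{k=1}^{M}\lambda^{(\sigma-s)k}\Bigr),
\]
with $c,C>0$ depending only on $n$, $s$ and the generator, where we used $N\lambda^{\,n-s}=\lambda^{\sigma-s}$. For the upper bound I would split the Gagliardo integral $\int_{\Omega_M}\int_{\Omega_M^c}|x-y|^{-n-s}$ according to the dyadic scale $|x-y|\sim\lambda^{k}$: at level $k\le M$ the boundary $\partial\Omega_M$ is covered by $N^{k}$ congruent pieces of diameter $\lesssim\lambda^{k}$, and by the scaling $P_s(\lambda\,\cdot\,)=\lambda^{n-s}P_s(\cdot)$ of Proposition \ref{elementary_properties_ps}(\emph{iii}), applied to a localized version of the integral, each such piece contributes at most $C(\lambda^{k})^{n-s}$; the contribution of distances $<\lambda^{M}$, where $\Omega_M$ is piecewise flat, is comparable to the $k=M$ term, and the far field $|x-y|\gtrsim1$ is bounded by $C|\Omega_\infty|$. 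Summing the geometric-type series gives the upper bound. For the lower bound, at each level $k\le M$ I would isolate a fixed fraction of the $N^{k}$ ``bumps'' created at stage $k$ that are pairwise $\gtrsim\lambda^{k}$-separated, and integrate only over pairs $(x,y)$ with $|x-y|\in[\lambda^{k+1},\lambda^{k}]$ localized near those bumps; by a blow-up and scaling argument each such bump contributes $\ge c(\lambda^{k})^{n-s}$, and since distinct levels use disjoint distance ranges these contributions simply add, yielding the lower bound.

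Granting this estimate, the conclusions follow. If $s\in(0,\sigma)$ then $\lambda^{\sigma-s}<1$, so $P_s(\Omega_M)$ is bounded uniformly in $M$; moreover the same scale decomposition gives $|P_s(\Omega_{M+1})-P_s(\Omega_M)|\le C\lambda^{(\sigma-s)(M+1)}$ (only the level-$(M{+}1)$ details change), which is summable, so $\lim_M P_s(\Omega_M)$ exists and is finite, and $P_s(\Omega_\infty)\le\liminf_M P_s(\Omega_M)<\infty$ by lower semicontinuity of $P_s$ along the $L^1$-convergence $\Omega_M\to\Omega_\infty$. If $s\in[\sigma,1)$ then $\lambda^{\sigma-s}\ge1$: for $s>\sigma$ the top term $\lambda^{(\sigma-s)M}$ already forces $P_s(\Omega_M)\to\infty$, while for $s=\sigma$ every term equals $1$, so $P_\sigma(\Omega_M)\asymp M+1\to\infty$; in either case $P_s(\Omega_M)\to\infty$. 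Finally, $P_s(\Omega_\infty)=\infty$ for $s\in[\sigma,1)$ is obtained by applying the lower-bound argument directly to $\Omega_\infty$, which carries bumps at every scale $\lambda^{k}$, $k\in\N$: one gets $P_s(\Omega_\infty)\ge c\sum_{k\ge1}\lambda^{(\sigma-s)k}=\infty$. This establishes (\emph{i}) and completes (\emph{ii}).

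The delicate point, and the step I expect to be the main obstacle, is the lower bound — precisely because $P_s$ is \emph{not} monotone under set inclusion, so one cannot argue naively that ``adding a bump at scale $\ell$ increases $P_s$ by $\gtrsim\ell^{\,n-s}$''. The remedy is that $P_s$ is itself a nonnegative double integral: restricting both integrations to a $\lambda^{k}$-neighborhood of a single stage-$k$ bump that lies at definite distance from all other bumps of the same stage, and then rescaling by $\lambda^{k}$, produces an honest lower bound by a fixed positive multiple of $(\lambda^{k})^{n-s}$. Making this rigorous — in particular, choosing the generator so that the selected bumps at each stage are uniformly separated and the associated distance annuli across stages are genuinely disjoint, so that the contributions aggregate — is where the specific combinatorics of the \cite{Lom}-type construction really enters, and it is the main technical content of the proof; the rest is bookkeeping of geometric series.
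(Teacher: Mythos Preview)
Your proposal is correct and follows the same multiscale philosophy as the paper, but the specific construction and, crucially, the lower-bound mechanism differ. The paper does not fractalize the boundary of a fixed polytope \`a la von Koch; instead it takes a base set $T_0\supset B_{c_0}$ and places, at each generation $k$, $ab^{k-1}$ pairwise disjoint roto-translated copies $T_k^i:=F_k^i(T_0)$ of $\lambda^{-k}T_0$ (with $\lambda=b^{1/(n-\sigma)}>1$), setting $\Omega_M=T_0\cup\bigcup_{k\le M}\bigcup_i T_k^i$. The key device for the lower bound is a fixed ``witness'' set $S_0\subset\R^n\setminus\Omega_\infty$ of positive measure whose images $S_k^i:=F_k^i(S_0)$ also remain in $\R^n\setminus\Omega_\infty$; since $T_k^i\subset\Omega_M$ and $S_k^i\subset\Omega_M^c$, one gets in one line
\[
P_s(\Omega_M)=\Ll_s(\Omega_M,\Omega_M^c)\ \ge\ \sum_{k\le M}\sum_i\Ll_s(T_k^i,S_k^i)=\Ll_s(T_0,S_0)\Bigl(1+\tfrac{a}{\lambda^{n-s}}\sum_{k=0}^{M-1}(b/\lambda^{n-s})^k\Bigr),
\]
which is exactly your geometric series (note $b/\lambda^{n-s}=b^{(s-\sigma)/(n-\sigma)}$), but obtained without any distance-annulus decomposition or separation combinatorics. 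So the step you correctly flag as ``the main technical content'' is, in the paper's route, simply bypassed by building into the construction a piece of complement that scales rigidly alongside each building block; the same trick immediately gives $P_s(\Omega_\infty)=\infty$ for $s\ge\sigma$. Your von Koch approach buys a connected $\Omega_M$ with a genuinely fractal boundary, which is geometrically more satisfying, at the cost of the lower-bound work you describe; the paper's disjoint-copies approach trades connectedness for a trivial lower bound.
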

\begin{proof}To construct the family of sets $\{\Omega_M\}_{M \in \mathbb{N}}$, we employ the construction of fractal-type sets in \cite{Lom}, which we briefly check below.

Specifically, let $T_0 \subset \mathbb{R}^n$ be a bounded open set with $0 < |T_0| < \infty$ and finite perimeter, such that, up to translation, $B_{c_0} \subset T_0$. Let $a, b \in \mathbb{N}$, with $b \geq 2$ arbitrary. Consider the following family of roto-translations of  a scaling of $T_0$:
\begin{equation*}
    T_k^i := F_k^i(T_0) := \mathcal{R}_k^i \big( \lambda^{-k} T_0 \big) + x_k^i,
\end{equation*}
where $\lambda := b^{\frac{1}{n - \sigma}} > 1$, $k \in \mathbb{N}$, $1 \leq i \leq ab^{k-1}$, $\mathcal{R}_k^i \in SO(n)$, and $x_k^i \in \mathbb{R}^n$ are chosen to ensure that the sets do not overlap, i.e.,
\begin{equation*}
    |T_k^i \cap T_h^j| = 0 \quad \text{for all } (k, i) \neq (h, j) \quad \text{ and }\quad |T^i_k\cap T_0|=0\quad \text{for all } k,i.
    \end{equation*}

Then, for every $M \in \mathbb{N}$, we define
\begin{equation*}
    \Omega_M := T_0 \cup \bigcup_{k=1}^M \bigcup_{i=1}^{ab^{k-1}} T_k^i, \quad \text{and} \quad \Omega_\infty := T_0 \cup \bigcup_{k=1}^\infty \bigcup_{i=1}^{ab^{k-1}} T_k^i.
\end{equation*}
Furthermore, we assume the existence of a set $S_0 \subset \mathbb{R}^n \setminus \Omega_\infty$, with $|S_0| > 0$, such that
\begin{equation*}
    S_k^i := F_k^i(S_0) \subset \mathbb{R}^n \setminus \Omega_\infty \quad \text{for all } k \in \mathbb{N} \text{ and} \, 1 \leq i \leq ab^{k-1}.
\end{equation*}
Such an assumption is not restrictive and can be ensured by an appropriate choice of the initial set $T_0$, see \cite[Remark 3.9]{Lom}.

For any couple of disjoint sets $A,\,B\subset\mathbb R^n$, let 
$$ \mathcal L_s(A,B):=\int_A\int_B\frac{1}{|x-y|^{n+s}}\,dx\,dy.$$
Then, we obtain
\begin{equation*}
\begin{split}
P_s(\Omega_M)&=\Ll_s(\Omega_M, \Omega_M^c)=\Ll_s(T_0,\Omega_M^c)+\sum_{k=1}^M\sum_{i=1}^{ab^{k-1}}\Ll_s(T_k^i,\Omega_M^c)\\
&
\leq
\Ll_s(T_0,T_0^c)+\sum_{k=1}^M\sum_{i=1}^{ab^{k-1}}\Ll_s(T_k^i,(T_k^i)^c)
=
P_s(T_0) + \sum_{k=1}^M\sum_{i=1}^{ab^{k-1}}\Ll_s(F_k^i(T_0),F_k^i(T_0^c))\\
&
=
P_s(T_0) \left( 1+ \frac{a}{\lambda^{n-s}}\sum_{k=0}^{M-1}\Big(\frac{b}{\lambda^{n-s}}\Big)^k\right),
\end{split}\end{equation*}
and
\begin{equation*}\begin{split}
P_s(\Omega_M)&=\Ll_s(\Omega_M, \Omega_M^c)=\Ll_s(T_0,\Omega_M^c)+\sum_{k=1}^M\sum_{i=1}^{ab^{k-1}}\Ll_s(T_k^i,\Omega_M^c)\\
&
\geq
\Ll_s(T_0,S_0)+\sum_{k=1}^M\sum_{i=1}^{ab^{k-1}}\Ll_s(T_k^i,S_k^i)
=
\Ll_s(T_0,S_0) + \sum_{k=1}^M\sum_{i=1}^{ab^{k-1}}\Ll_s(F_k^i(T_0),F_k^i(S_0))\\
&
=\Ll_s(T_0,S_0)\left(1+\frac{a}{\lambda^{n-s}}\sum_{k=0}^{M-1}\Big(\frac{b}{\lambda^{n-s}}\Big)^k\right).
\end{split}\end{equation*}
Notice that, since $P(T_0)<\infty$, we have
\begin{equation*}
\Ll_s(T_0,S_0)\leq\Ll_s(T_0, T_0^c)=P_s(T_0)<\infty,
\end{equation*}
for every $s\in(0,1)$ (see, e.g., \cite[Proposition 2.1]{Lom}). This proves (\emph{i}), since $\frac{b}{\lambda^{n-s}}=b^{\frac{s-\sigma}{n-\sigma}}<1$, when $s<\sigma$.

Moreover, for any $M\in\N$, we have that $\Omega_M\subset \Omega_\infty$ and 
\begin{equation*}
    0<|\Omega_M|<|\Omega_\infty|\le |T_0|+\sum_{k=1}^\infty\sum_{i=1}^{ab^{k-1}}|T_k^i|= |T_0|\left( 1+\frac{a}{\lambda^{n}}\sum_{k=0}^\infty\Big(\frac{b}{\lambda^{n}}\Big)^k\right)<\infty
\end{equation*}
and
$$ |\Omega_M\Delta \Omega_\infty|=| \Omega_\infty\setminus \Omega_M|=\sum_{k=M+1}^{\infty}\sum_{i=1}^{ab^{k-1}}|T^i_k|=\frac{a}{\lambda^{n}}\sum_{k=M}^\infty\Big(\frac{b}{\lambda^{n}}\Big)^k \to 0\quad \text{ as }\,M\to\infty,$$
since $\frac{b}{\lambda^n}=b^{-\frac{\sigma}{n-\sigma}}<1$, as $\sigma>0$. This, together with (\emph{i}), concludes the proof of (\emph{ii}). Finally, (\emph{iii}) follows trivially by construction.
\end{proof}

Explicit examples of families of sets satisfying the assumptions of Proposition \ref{frattali} can be found in \cite{Lom}. Fixed $\sigma\in(0,1)$, let us now explain how the family of sets $\{\Omega_M\}_{M \in \mathbb{N}}$ can be used to construct a counterexample to inequality \eqref{eq:failure_beta_s}, when $s\in[\sigma,1)$.

For each $M \in \mathbb{N}$, let $B_M$ and $B_\infty$ denote the balls centered at the origin such that $|B_M| = |\Omega_M|$ and $|B_\infty| = |\Omega_\infty|$, respectively. By point (\emph{iii}) of Proposition \ref{frattali}, we have
\[
h_{m,s}(B_{c_0}) \geq h_{m,s}(\Omega_M), \quad \text{for all } M \in \mathbb{N}.
\]
Moreover, combining \eqref{fractional-cheeger} with point (\emph{ii}) of Proposition \ref{frattali}, it follows that
\[
h_{m,s}(\Omega_M) \geq h_{m,s}(B_M) \geq h_{m,s}(B_\infty), \quad \text{for all } M \in \mathbb{N}.
\]
This implies that $h_{m,s}(\Omega_M)$ is uniformly bounded both above and below, independently of $M$. Therefore, the left-hand side of \eqref{eq:failure_beta_s} remains uniformly bounded as $M \to \infty$.

On the other hand, by \eqref{use-def-beta}, we obtain
\[
\beta_s^2(\Omega_M) \geq \delta_s(\Omega_M) = \frac{P_s(\Omega_M) - P_s(B_M)}{P_s(B_M)} \geq \frac{P_s(\Omega_M) - P_s(B_\infty)}{P_s(B_\infty)}.
\]
Hence, by point (\emph{i}) of Proposition \ref{frattali}, $\beta_s^2(\Omega_M)$ diverges as $M \to \infty$. Consequently, no constant $c > 0$ can make inequality \eqref{eq:failure_beta_s} valid for all $M \in \mathbb{N}$, which completes the counterexample.

}

\section*{Acknowledgments}
The authors warmly thank Nicola Fusco for preliminary discussions on the subject.

    The Authors are member of the {\em Gruppo Nazionale per l'Analisi Ma\-te\-ma\-ti\-ca, la Probabilit\`a e le loro Applicazioni} (GNAMPA) of the {\em Istituto Nazionale di Alta Matematica} (INdAM) and are partially supported by the INdAM--GNAMPA Project \emph{"Ottimizzazione Spettrale, Geometrica e Funzionale"}, CUP: E5324001950001 and by the PRIN 2022 project 2022R537CS \emph{"$NO^3$ - Nodal Optimization, NOnlinear elliptic equations, NOnlocal geometric problems, with a focus on regularity"}, CUP: J53D23003850006, founded by the European Union - Next Generation EU.

\bibliographystyle{acm}
\bibliography{reference}

\end{document}